\documentclass[letter, 11pt]{article} 
\usepackage{amsmath}
\usepackage{mathrsfs} 
\usepackage{amsthm}
\usepackage{amsfonts}
\usepackage{amssymb}
\usepackage{latexsym}
\usepackage{tikz} 
\usepackage{esint}
\usepackage{mathtools}
\usepackage{stmaryrd}

\usepackage{mathabx}

\usepackage{tikz-cd}
\usepackage{cancel}
\usepackage{sectsty}
\usepackage{hyperref}
\usepackage{caption}
\usepackage{float}
\usepackage{setspace}

\usepackage{comment}
\usepackage{quiver}

\usepackage{xcolor}

\definecolor{mediumtealblue}{rgb}{0.0, 0.33, 0.71}
\definecolor{red(munsell)}{rgb}{0.96, 0.12, 0.24}
\definecolor{forestgreen}{rgb}{0.13, 0.55, 0.13}
\definecolor{ginger}{rgb}{0.69, 0.4, 0.0}

\newcommand\myshade{85}
\colorlet{mylinkcolor}{red(munsell)}
\colorlet{mycitecolor}{mediumtealblue}
\colorlet{myurlcolor}{forestgreen}

\usepackage{lmodern}

\usepackage[utf8]{inputenc}

\usepackage[inline,shortlabels]{enumitem}

\usepackage{amssymb,mathrsfs,stmaryrd}
\usepackage{mathtools}
\usepackage{thmtools,thm-restate}
\usepackage{colonequals}
\usepackage{tikz}
\usetikzlibrary{cd}

\usepackage[capitalize,noabbrev]{cleveref}

\hypersetup{
	linkcolor  = mylinkcolor!\myshade!black,
	citecolor  = mycitecolor!\myshade!black,
	urlcolor   = myurlcolor!\myshade!black,
	colorlinks = true
}

\mathchardef\mhyphen="2D

\usepackage{titlesec}
\titleformat{\subsection}[runin]{\normalfont\bfseries}{\thesubsection}{1em}{}
\titleformat{\subsubsection}[runin]{\normalfont\bfseries}{\thesubsubsection}{1em}{}

\renewcommand{\thesubsubsection}{(\thesubsection.\arabic{subsubsection})}

\usepackage{tocloft} \setlength\cftparskip{-1pt}

\usepackage[OT2,T1]{fontenc}
\DeclareSymbolFont{cyrletters}{OT2}{wncyr}{m}{n}
\DeclareMathSymbol{\Sha}{\mathalpha}{cyrletters}{"58}

\sectionfont{\fontsize{12}{15}\selectfont}
\subsectionfont{\fontsize{11}{15}\selectfont}
\subsubsectionfont{\fontsize{10}{15}\selectfont}

\usepackage[colorinlistoftodos,textsize=scriptsize]{todonotes}
\usepackage{marginnote}

\newcommand{\mytodo}[2][]{{%
 \let\marginpar\marginnote
 \reversemarginpar
 \renewcommand{\baselinestretch}{0.8}%
 \todo[#1]{#2}}}

\usepackage[matrix,tips,graph,curve]{xy}
\usepackage{enumitem}

\linespread{1}

\makeatletter

\usepackage[
backend=biber,
style=alphabetic,
sorting=anyt, doi=false, url=false, isbn=false, giveninits=true, maxnames = 6,
]{biblatex}

\addbibresource{newref.bib}

\setlength\@tempdima  {6.5in}
\addtolength\@tempdima {-\textwidth}
\addtolength\hoffset{-0.5\@tempdima}
\setlength{\textwidth}{6.5in}
\setlength{\textheight}{8.75in}
\addtolength\voffset{-1in}

\makeatother

\makeatletter 
\makeatother

\makeatletter

\theoremstyle{plain}
\newtheorem{theorem}[subsubsection]{Theorem}
\newtheorem{corollary}[subsubsection]{Corollary}
\newtheorem{lemma}[subsubsection]{Lemma}
\newtheorem{proposition}[subsubsection]{Proposition}

\theoremstyle{definition}
\newtheorem{definition}[subsubsection]{Definition}
\newtheorem{remark}[subsubsection]{Remark}
\newtheorem{example}[subsubsection]{Example}


\newcommand{\IA}{\mathbb{A}}

\newcommand{\IC}{\mathbb{C}}
\newcommand{\ID}{\mathbb{D}}

\newcommand{\IF}{\mathbb{F}}
\newcommand{\IG}{\mathbb{G}}

\newcommand{\IN}{\mathbb{N}}

\newcommand{\IP}{\mathbb{P}}
\newcommand{\IQ}{\mathbb{Q}}

\newcommand{\IV}{\mathbb{V}}

\newcommand{\IZ}{\mathbb{Z}}

\newcommand{\sC}{\mathcal{C}}



\newcommand{\End}{\mathrm{End}}

\newcommand{\Hom}{\mathrm{Hom}}
\newcommand{\Aut}{\mathrm{Aut}}

\renewcommand{\deg}{\mathrm{deg}}

\newcommand{\Spec}{\mathrm{Spec}}

\newcommand{\che}[1]{\widecheck{#1}}



\newcommand\iso{\,{\cong}\,} 
\newcommand\tensor{{\otimes}}



\newcommand{\<}{\langle}
\renewcommand{\>}{\rangle}





\newcommand{\into}{\hookrightarrow}

\def\d/{/\mspace{-6.0mu}/}

\def\wt{\widetilde}


\newcommand{\frakm}{\mathfrak{m}}

\newcommand{\p}{\partial}

\newcommand{\tf}{\wt{f}}

\newcommand{\fp}{\mathfrak{p}}

\newcommand{\sV}{\mathcal{V}}
\newcommand{\sI}{\mathcal{I}}

\newcommand{\sJ}{\mathcal{J}}
\newcommand{\sF}{\mathcal{F}}
\newcommand{\sH}{\mathcal{H}}
\newcommand{\sX}{\mathcal{X}}
\newcommand{\sS}{\mathcal{S}}
\newcommand{\sT}{\mathcal{T}}
\newcommand{\sE}{\mathcal{E}}
\newcommand{\sO}{\mathcal{O}}

\newcommand{\sZ}{\mathcal{Z}}
\newcommand{\sP}{\mathcal{P}}

\newcommand{\shPic}{\mathscr{P}}
\newcommand{\Pic}{\mathrm{Pic}}
\newcommand{\Fil}{\mathrm{Fil}}
\renewcommand{\H}{\mathrm{H}}
\newcommand{\Spf}{\mathrm{Spf}}

\newcommand{\shP}{\mathscr{P}}
\newcommand{\Def}{\mathrm{Def}}

\newcommand{\dR}{\mathrm{dR}}
\newcommand{\et}{\textrm{{\'e}t}}
\newcommand{\cris}{\mathrm{cris}}

\newcommand{\Cris}{\mathrm{Cris}}
\newcommand{\bH}{\mathbf{H}}

\newcommand{\NS}{\mathrm{NS}}
\newcommand{\GL}{\mathrm{GL}}
\newcommand{\codim}{\mathrm{codim}}

\newcommand{\sfM}{\mathsf{M}}

\newcommand{\Gal}{\mathrm{Gal}}
\newcommand{\sA}{\mathcal{A}}

\newcommand{\bxi}{\boldsymbol{\xi}}

\newcommand{\fm}{\mathfrak{m}}

\renewcommand{\tf}{\mathrm{tf}}
\newcommand{\Mon}{\mathrm{Mon}}

\newcommand{\red}{\mathrm{red}}

\newcommand{\pr}{\mathrm{pr}}

\newcommand\restr[2]{{
	\left.\kern-\nulldelimiterspace
	#1
	\vphantom{\big|}
	\right|_{#2}
	}}


\newcommand{\cl}{\mathrm{cl}}

\newcommand{\mpr}{}

\title{\textbf{\large{Arithmetic Deformation of Line Bundles}}}

\author{\normalsize{David Urbanik and Ziquan Yang}}

\vspace{-1ex}
  
\date{\today}

\setcounter{tocdepth}{2}

\begin{document}

\maketitle

\begin{abstract}
    We introduce a new method to study mixed characteristic deformation of line bundles. In particular, for families $f : \mathcal{X} \to \mathcal{S}$ with big monodromy and large period image defined over the ring of $N$-integers $\mathcal{O}_{L}[1/N]$ of a number field $L$, we produce a proper closed subscheme $\mathcal{E} \subsetneq \mathcal{S}$ outside of which all line bundles appearing in positive characteristic fibres of $f$ admit characteristic zero lifts. This in particular applies to elliptic surfaces over $\mathbb{P}^1$ and projective hypersurfaces in $\mathbb{P}^3$ of degree $d \geq 5$. We also study the locus $\mathcal{E}$ in more detail in the $h^{0, 2} = 2$ case.
\end{abstract}

\tableofcontents



\section{Introduction}

\subsection{Main Results} \label{sec: main results}

In \cite{Del02}, Deligne proved that every line bundle on a K3 surface in positive characteristic lifts to characteristic $0$ along with the surface. Deligne's theorem and its extensions have historically played a pivotal role in addressing the Tate conjecture for K3 surfaces and varieties alike in positive characteristic, because a mainstream method relies on considering characteristic $0$ liftings and connecting the Tate conjecture to its Hodge-theoretic analogue, i.e., the Lefschetz $(1, 1)$-theorem (e.g., \cite{Nygaard, NO, MPTate, HYZ}). To explore the potential for future advances in this direction, we pose the following question: \textit{In an arithmetic family, can we generate all line bundles on the characteristic $p$ fibers by specializing line bundles on the characteristic $0$ fibers?}

To set up a context for the question, let $L \subseteq \IC$ be a number field and $\sO_L$ be its ring of integers. Suppose that $\sS$ is a smooth quasi-projective $\sO_L[1/N]$-scheme for some $N \in \IN_{> 0}$ with $\sS_\IC$ connected, and $f : \sX \to \sS$ is a polarized smooth projective family with geometrically connected fibers. Then we provide the following answer to address the above question. 

\begin{theorem}
\label{mainthm1}
Suppose that $f_\IC$ has large monodromy and sufficiently large period image. Then there exists a proper closed subscheme $\sE \subsetneq \sS$ such that for every algebraically closed field $k$ of positive characteristic and $s \in \sS(k) \setminus \sE(k)$, every line bundle $\xi$ on the fibre $\sX_{s}$ lifts to characteristic $0$. 
\end{theorem}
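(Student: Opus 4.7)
\emph{Proof proposal.} The strategy is to translate lifting of line bundles into a question about the relative position of crystalline Tate classes and the Hodge filtration, and then to control the resulting bad locus via the complex-analytic hypotheses on $f_\IC$.

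Fix $s \in \sS(k)$ with $\mathrm{char}(k) = p$, set $W = W(k)$, and pick a formal lift $\tilde{s}: \Spf W \to \sS$ of $s$. By the Berthelot--Ogus comparison together with Grothendieck--Messing-style deformation theory for line bundles, $\xi \in \Pic(\sX_s)$ lifts to the formal deformation $\sX_{\tilde{s}}$ — and thence, after algebraization and base change to the generic fibre, to characteristic zero — if and only if the crystalline Chern class $c_1(\xi) \in \H^2_{\cris}(\sX_s/W)[1/p]$ lies in the $F^1$-piece of the Hodge filtration determined by $\tilde{s}$. The theorem therefore reduces to producing, for $s$ outside a proper closed subscheme, a single lift $\tilde{s}$ under which the entire finite-dimensional $\IQ_p$-subspace $T_s \subseteq \H^2_{\cris}(\sX_s/W)[1/p]$ spanned by crystalline Chern classes of \emph{all} line bundles on $\sX_s$ lies in $F^1$.

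Next, I would exploit the $p$-adic period map on the formal neighborhood of $s$ in $\sS$. The assumption of large monodromy and sufficiently large period image for $f_\IC$, via the comparison isomorphisms and smooth-proper base change, should translate into surjectivity of the Kodaira--Spencer map (equivalently, of the derivative of the $p$-adic period map) at $s$ onto the horizontal tangent space of the relevant period domain. Consequently, as $\tilde{s}$ varies among formal $W$-lifts of $s$, the Hodge filtration sweeps out a large family of admissible flags; whenever $T_s$ is "not too large", some such flag contains $T_s$, giving the desired simultaneous lift of all $\xi$. The obstruction becomes an atypical-intersection phenomenon: $T_s$ fails to fit in any admissible flag precisely when $s$ lies in an unexpectedly small Hodge locus of $\sS$.

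To make $\sE$ a genuine proper closed subscheme, I would invoke Cattani--Deligne--Kaplan to represent each complex Hodge locus as an algebraic subvariety of $\sS_\IC$; the large monodromy hypothesis limits the atypical loci to finitely many components, whose union is a proper subvariety of $\sS_\IC$. Galois equivariance together with a standard spreading-out argument descend this to a closed subscheme $\sE \subsetneq \sS$ over $\sO_L[1/N]$, after possibly enlarging $N$. The anticipated main obstacle is the $p$-adic step: converting the complex-analytic surjectivity of periods into a statement for the $p$-adic period map strong enough to accommodate the full (potentially exotic) Tate space $T_s$, and handling integrality so that the formal flag actually admits a flat lift over $W$. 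The arithmetic descent of $\sE$ from $\sS_\IC$ is a secondary but still delicate step.
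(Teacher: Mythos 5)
There are two genuine gaps here, one structural and one fatal to the overall architecture. First, you are trying to prove something strictly stronger than the theorem requires: you seek a \emph{single} lift $\wt{s}$ whose Hodge filtration contains the entire Tate space $T_s$ at once. The theorem only asks that each $\xi$ lift along \emph{some} $\wt{s}$, possibly a different one for each $\xi$, and this weaker per-class statement is what the paper proves (via the deformation space $\Def(\xi) \subseteq \hat{\sS}_s$, cut out by at most $h^{0,2}$ equations by the generalization of Deligne's \cite[Prop.~1.5]{Del02}, i.e.\ \ref{thm: Deligne generalized}). The simultaneous version generally fails even for nice families. Moreover, your translation of the hypotheses into ``surjectivity of the Kodaira--Spencer map'' overstates what is assumed: ``sufficiently large period image'' only gives Kodaira--Spencer rank $\geq h^{0,2}$ at a \emph{general} point, and the locus where this fails must itself be thrown into $\sE$ (the locus $E'$ of \ref{multicharcodimlem}); it cannot be derived from large monodromy.

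The fatal gap is the mechanism by which the complex-analytic finiteness is supposed to control positive characteristic. Invoking Cattani--Deligne--Kaplan and geometric Zilber--Pink produces a proper closed $\sE_\IC \subsetneq \sS_\IC$ containing the atypical Hodge loci, and spreading out gives a closed $\sE \subsetneq \sS$. But a point $s \in \sS(k)\setminus\sE(k)$ in characteristic $p$ carrying a non-liftable $\xi$ gives rise to a ``vertical'' component of the arithmetic Noether--Lefschetz locus, living entirely over one prime; such components are a priori invisible to any characteristic-zero construction, and there are potentially infinitely many of them across infinitely many primes. Galois equivariance and spreading out of finitely many complex subvarieties say nothing about them. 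The paper closes exactly this gap by encoding the atypicality condition as a finite-type scheme $\sI_r$ of jets over $\sO_L[1/N]$ and using Noetherianity (\ref{Econstrprop}(i)): for $r \gg 0$ every irreducible component of $\sI_r$ has non-empty generic fibre, so its image in $\sS$ is determined by the complex fibre, and the Ax--Schanuel argument over $\IC$ then bounds the image at \emph{all} residue characteristics simultaneously. Combined with miracle flatness ($\Def(\xi)_k$ of codimension exactly $h^{0,2}$ forces $\Def(\xi)$ flat over $W$, hence possessing an $\sO_K$-point), this is what actually produces the lift. Without some substitute for this integral jet argument, your proof does not rule out non-liftable classes at points outside the spread-out complex locus.
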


By ``$\xi$ lifts to characteristic $0$'', we mean that there exists a finite extension $K$ of $W(k)[1/p]$ and a point $\wt{s} \in \sS(\sO_K)$ lifting $s$, such that $\xi$ lies in the image of the specialization map $\Pic(\sX_{\wt{s}}) \to \Pic(\sX_s)$. We now define the conditions in the hypothesis. Their necessity will be discussed in \S\ref{sec: non-examples}. 

\begin{definition}
\label{def: large monodromy}
Let $S$ be a connected $\IC$-variety and $X \to S$ be a smooth projective morphism with connected fibers. 
\begin{itemize}
    \item We say that $X \to S$ has \emph{large monodromy} if for some $\IC$-point $s \in S$ not lying on the Noether-Lefschetz locus, the Zariski closure of the image of the monodromy representation\footnote{\mpr{When taking Betti cohomology, we take $X_s$ to mean the analytic space associated to the variety $X_s$ and do not make a notational difference.}}
    \[ \pi_{1}(S, s) \to \mathrm{O}(\mathrm{H}^{2}(X_s, \IQ)) \]
    contains $\mathrm{SO}(\mathrm{T}^{2}(X_{s}, \IQ))$ as the identity component. Here we are equipping $\H^2(X_s, \IQ)$ with the symmetric bilinear form induced by the chosen polarization, and taking $\mathrm{T}^{2}(X_{s}, \IQ)$ to be the orthogonal complement of the subspace of $(1, 1)$-classes. 
    \item We say that $X \to S$ has \emph{sufficiently large period image} if for some (and hence a general) $\IC$-point $s \in S$, the Kodaira-Spencer map 
    \begin{equation}
        \label{eqn: Kodaira-Spencer}
        T_{s} S \to \Hom(\H^1(\Omega^1_{X_s}), \H^2(\sO_{X_s}))
    \end{equation}
    has image of dimension at least $h^{0, 2}(X_s) = \dim_{\IC} \H^{2}(\sO_{X_s})$.
\end{itemize}
\end{definition}

The proof of Thm~\ref{mainthm1} yields structural theorems for the \textit{arithmetic Noether-Lefschetz (NL) locus,} and the relative Picard scheme $\shP_{\sX/\sS}$. We say that a proper closed reduced subscheme $Z \subseteq \sS$ is a \textit{component of the arithmetic NL locus} if it is the scheme-theoretic image of an irreducible component of the reduced subscheme of $\shP_{\sX/\sS}$ which does not dominate $\sS$. Then the arithmetic NL locus, which we shall denote by $\mathcal{N}$, is the collection of all such (typically infinitely many) components. It is characterized by the property that a geometric point $s \to \sS$ factors through $\mathcal{N}$ if and only if $\mathrm{rank\,} \NS(\sX_{s}) > \mathrm{rank\,} \NS(\sX_{\bar{\eta}})$, where $\bar{\eta}$ is a geometric generic point of $\sS$. Then we have the following theorem. 

\begin{theorem}
\label{mainthm2}
   Under the hypothesis of Thm \ref{mainthm1}, there exists a closed proper subscheme $\sE \subsetneq \sS$ such that the restriction of $\shP_{\sX/\sS}$ over $\sS \smallsetminus \sE$ is syntomic (i.e., flat and lci) over $\sO_L[1/N]$, and the restriction of every component of $\mathcal{N}$ over $\sS \smallsetminus \sE$ is flat over $\sO_L[1/N]$ and has codimension exactly $h^{0, 2}(\sX_{\bar{\eta}})$ in $\sS \smallsetminus \sE$.
 \end{theorem}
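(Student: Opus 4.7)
The plan is to give $\shP_{\sX/\sS}|_{\sS \setminus \sE}$ a local presentation as the vanishing of $r := h^{0,2}(\sX_{\bar\eta})$ obstruction equations inside a smooth $\sO_{L}[1/N]$-scheme, to show these equations form a regular sequence at every geometric point by using Theorem \ref{mainthm1} to reduce to characteristic zero, and then to deduce the syntomic property together with the Noether--Lefschetz codimension statement.

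At any geometric point $(s, \xi)$ of $\shP_{\sX/\sS}$ lying over $s \in (\sS \setminus \sE)(k)$, deformation theory of a line bundle on a smooth family yields a presentation of completed local rings
\[ \what{\sO}_{\shP_{\sX/\sS}, (s, \xi)} \cong \what{\sO}_{\sS, s} \llbracket t_{1}, \ldots, t_{h^{1}(\sO_{X_{s}})} \rrbracket / (f_{1}, \ldots, f_{r}), \]
where the linearization $\bar f \colon T_{s} \sS \to \H^{2}(\sO_{X_{s}})$ is the composition of the Kodaira--Spencer map \eqref{eqn: Kodaira-Spencer} with evaluation at $c_{1}(\xi)$. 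The lci property at $(s, \xi)$ is equivalent to $(f_{1}, \ldots, f_{r})$ being a regular sequence, which in turn is equivalent to $\bar f$ being surjective. I would check this first in characteristic zero, where the sufficiently-large-period-image hypothesis directly produces classes for which $\bar f$ is surjective; combined with large monodromy (which gives a dense supply of Noether--Lefschetz classes), I expect surjectivity to hold outside a proper Zariski-closed locus of $\sS_{\IC}$, which we may absorb into $\sE$ if necessary.

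In positive characteristic I would apply Theorem \ref{mainthm1}: every $(s, \xi)$ with $s \notin \sE$ admits a characteristic-zero lift $(\wt{s}, \wt{\xi})$, and by comparing the two obstruction maps across the lift (via crystalline or de Rham base change and the compatibility of Chern classes) one transfers surjectivity from the generic fibre to the special fibre. This yields the regular-sequence property on all of $\sS \setminus \sE$, so that $\shP_{\sX/\sS}|_{\sS \setminus \sE}$ is lci over $\sO_{L}[1/N]$ with fibers of constant relative dimension; miracle flatness over the regular one-dimensional base $\sO_{L}[1/N]$ then yields flatness, establishing syntomicity.

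For the second half, any component $Z$ of $\mathcal{N}$ is by definition the scheme-theoretic image of an irreducible component $\shP^{Z}$ of $\shP_{\sX/\sS}^{\red}$, and the projection $\shP^{Z} \to Z$ is generically a $\Pic^{0}$-torsor of relative dimension $h^{1}(\sO_{X_{s}})$. Coupling this with the local description of $\shP_{\sX/\sS}$ above forces $Z \cap (\sS \setminus \sE)$ to have codimension exactly $r = h^{0, 2}$ at every geometric point, and flatness of $Z|_{\sS \setminus \sE}$ over $\sO_{L}[1/N]$ follows from flatness of $\shP^{Z}$ together with the surjection $\shP^{Z} \to Z$. The main obstacle is verifying surjectivity of $\bar f$ at positive-characteristic points: this is where Theorem \ref{mainthm1} is used essentially, reducing the arithmetic question about deformations of Picard schemes to an infinitesimal computation in characteristic zero where the large-period-image hypothesis applies directly.
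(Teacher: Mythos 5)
There is a genuine gap, and it sits exactly where the paper's real work is. Your reduction of the lci property to surjectivity of the linearized obstruction map $\bar f \colon T_s\sS \to \H^2(\sO_{X_s})$ is incorrect: in the regular local ring $\what\sO_{\sS,s}$, the ideal $(f_1,\ldots,f_r)$ is a regular sequence if and only if the quotient has codimension exactly $r = h^{0,2}$, which is strictly weaker than the equations having independent linear parts (compare $x^2$ in $k[\![x]\!]$). Demanding surjectivity of $\bar f$ would force every Noether--Lefschetz component to be smooth, which is neither true nor provable here; moreover the ``sufficiently large period image'' hypothesis only bounds the rank of the full Kodaira--Spencer map, not of its evaluation at the fixed class $c_1(\xi)$, so even in characteristic zero surjectivity of $\bar f$ is not ``direct.'' What actually has to be proved is the codimension statement for $\Def(\xi)_k$, and the locus where it fails cannot simply be ``absorbed into $\sE$ if necessary'': showing that this locus is a \emph{proper} closed subscheme is the content of \ref{Econstrprop} and \ref{multicharcodimlem}, which the paper establishes via the jet-space construction $\sI_r$ and the Ax--Schanuel theorem, uniformly in all characteristics.

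Your positive-characteristic step also reverses the paper's logical order and does not close. Theorem \ref{mainthm1} is itself \emph{deduced} from the codimension bound \ref{multicharcodimlem} (via \ref{cor: power-primitive case} and miracle flatness), so it cannot be used as input to prove that bound. And even granting \ref{mainthm1}, the existence of a characteristic-zero lift $(\wt s,\wt\xi)$ does not control $\dim \Def(\xi)_k$: the formal scheme $\Def(\xi) \subseteq \hat\sS_s$ over $W(k)$ could a priori have a flat component of the right codimension together with an extra vertical component of too-small codimension in the special fibre, and linearized obstruction maps can degenerate under specialization in any case. The second half of your argument (flatness and codimension of the components of $\mathcal N$) is essentially the paper's, via \ref{lem: Zariski closure is flat} once one knows the generic point of each component has characteristic zero --- but it again rests entirely on the codimension input \ref{multicharcodimlem} that your proposal does not supply.
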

In particular, the above theorem implies that when restricted to $\sS \smallsetminus \sE$ the arithmetic NL locus $\mathcal{N}$ can be recovered from the variation of Hodge structures (VHS) on $R^2 f_{\IC *} \IQ$, as every component of $\mathcal{N}$ is the Zariski closure of some component of the usual (Hodge-theoretic) NL locus over $\sS(\IC)$. We also remark that our method will give an explicit construction of $\sE$: Away from small primes, $\sE$ is the Zariski closure of a so-called ``atypical Hodge locus'' on $\sS_\IC$ (\cite{BKU}) and the locus where the Kodaira-Spencer map has rank $< h^{0, 2}$. The issue of determining $\sE$ effectively is discussed at the end of \S\ref{sec: methods}. 

To illustrate what might happen on $\sE$, we give a thorough analysis in the case when $h^{0, 2} = 2$. See \S\ref{sec: non-liftable}. One conclusion we can draw is the following (cf. \ref{thm: h = 2 case}): 

\begin{theorem}
\label{thm: h = 2 case rough}
    In the situation of \ref{mainthm1}, suppose in addition for every $s \in \sS(\IC)$, $h^{0, 2}(\sX_s) = 2$ and the Kodaira-Spencer map (\ref{eqn: Kodaira-Spencer}) has rank $\ge 2$. 

    Then for $p \gg 0$, the following holds: Let $k$ be an algebraically closed field of characteristic $p$ and take any $s \in S(k)$ and $\xi \in \Pic(\sX_s)$. Up to equi-characteristic deformations of the pair $(s, \xi)$, $m\xi$ is a sum of liftable line bundles for some $m \in \IN_{> 0}$. 
\end{theorem}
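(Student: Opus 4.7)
The plan is to combine Theorem~\ref{mainthm1} with a detailed analysis of the exceptional locus $\sE$ specialised to the $h^{0,2}=2$ setting. Under the hypothesis that the Kodaira--Spencer map has rank $\geq 2$ everywhere on $\sS_\IC$, its degeneracy locus is empty, so by the description of $\sE$ following Theorem~\ref{mainthm2}, $\sE$ is simply the Zariski closure of the atypical Hodge locus on $\sS_\IC$. Given $s$ and $\xi$, I would first use the freedom to equi-characteristically deform the pair $(s,\xi)$ (which amounts to deforming points of the Picard scheme $\shP_{\sX/\sS}$ within characteristic $p$) to reduce to the case where $s$ is a generic geometric point of some irreducible component $Z\subseteq \sE$, specialised from a generic point of $Z$ in some characteristic $0$ lift.

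Next, I would analyse the generic VHS on $Z_\IC$. The key Hodge-theoretic input is that, with $h^{0,2}=2$, an atypical jump of the Néron--Severi rank along $Z$ forces the transcendental $\IQ$-Hodge structure on the generic fibre over $Z_\IC$, possibly after a finite étale cover, to split as an orthogonal direct sum $T_1\oplus T_2$ of sub-Hodge structures with $h^{2,0}(T_i)=1$. The orthogonal projectors onto the $T_i$ produce Hodge $(1,1)$-classes on $\sX|_{Z_\IC}$, which by the Lefschetz $(1,1)$-theorem correspond, after clearing denominators, to line bundles $\eta_1,\eta_2$ on $\sX|_{Z_\IC}$.

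I would then argue that each of the two rank-$2$ sub-VHS $T_i$ inherits enough large-monodromy and Kodaira--Spencer-surjectivity for Theorem~\ref{mainthm1} to apply in this smaller setting (now with $h^{0,2}=1$), yielding characteristic-$0$ liftability of every line bundle whose class is concentrated in $T_i^{(1,1)}$ at the fibre over $s$. Writing $c_1(\xi)$ in $\NS(\sX_s)\otimes\IQ$ as
\[
c_1(\xi) \;=\; \alpha \,+\, \beta_1 \,+\, \beta_2,
\]
with $\alpha \in \NS(\sX_{\bar{\eta}})\otimes \IQ$ extending to all of $\sS$ and $\beta_i \in T_i^{(1,1)}$, clearing denominators produces $m\cdot c_1(\xi)=c_1(\eta_0)+c_1(\eta_1')+c_1(\eta_2')$ for some $m$ and integral line bundles $\eta_0,\eta_1',\eta_2'$ each liftable to characteristic $0$ (along a lift of $s$ in $\sS$ for $\eta_0$, and along a lift of $s$ in $Z$ for $\eta_1',\eta_2'$). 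For $p\gg 0$ the $p$-part of $\NS(\sX_s)_{\tors}$ vanishes and one concludes $\xi^m\cong \eta_0\otimes\eta_1'\otimes\eta_2'$.

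The main obstacle I expect is the Hodge-theoretic classification step: showing that every atypical component in the $h^{0,2}=2$ case really forces a complete orthogonal splitting $T=T_1\oplus T_2$ with each $T_i$ of rank $\geq 2$ and $h^{2,0}=1$ (ruling out more exotic Mumford--Tate drops such as those coming from CM by a quaternion algebra, or showing that they can be absorbed into the splitting picture), and then verifying that the large-monodromy condition genuinely transfers to each $T_i$-factor so that the recursive invocation of Theorem~\ref{mainthm1} applies. The threshold $p\gg 0$ enters both through the torsion clearance above and through the $p$-adic deformation-theoretic underpinnings of Theorem~\ref{mainthm1}.
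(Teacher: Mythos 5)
Your overall strategy --- decompose the VHS over a component of $\sE$ and handle each factor separately --- is in the right spirit, but the key step fails. The paper's decomposition over (a cover of) a component $\sE_i$ is $\IV = \IV^0 \oplus \IV^1 \oplus \IV^2$, where $\IV^0$ is the globally algebraic part, $\IV^0 \oplus \IV^1$ is the monodromy-fixed part, and $\IV^2$ is its orthogonal complement; when $\IV^1 \neq 0$ one indeed gets $h^{2,0}(\IV^1) = h^{2,0}(\IV^2) = 1$. But your proposal to apply Theorem \ref{mainthm1} recursively to each factor breaks down precisely on the factor corresponding to $\IV^1$: it is a \emph{constant} sub-VHS (trivial monodromy, no period variation), so no large-monodromy or Kodaira--Spencer input is available for it, and in fact \ref{prop: only non-liftable situation} shows that the non-strongly-liftable classes are exactly those whose crystalline Chern class lies in $(\sH^0 \oplus \sH^1)$ but not in $\sH^0$. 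Your argument would conclude that every $\xi^m$ is a sum of liftable bundles \emph{without any deformation}, which is false and would render the ``up to equi-characteristic deformation'' clause of the theorem vacuous. (A smaller issue: the orthogonal projectors onto your $T_i$ are classes in $\End$ of a weight-two Hodge structure, not $(1,1)$-classes in $\H^2$, so the Lefschetz $(1,1)$-theorem does not produce line bundles from them.)

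The missing idea is how the equi-characteristic deformation is actually used. In the paper one shows (using that $\IV^2$ restricted to $\sE_{i}$ is non-isotrivial with $h^{2,0}=1$, together with the density of the Noether--Lefschetz locus for such a VHS and a specialization argument) that each $\sE_{i,\fp}$ contains a special point $u'$ where the \emph{moving} part $\sH^2_{i,\fp,\cris}$ acquires an algebraic class $\zeta$. A non-liftable $\xi$ has $c_{1,\cris}(\xi)$ in the fixed part, so after a $\IQ$-twist it extends to a relative line bundle $\wt{\bxi}$ over the whole cover $\wt{\sE}_{i,\fp}$; transporting along a curve from $u$ to $u'$ and writing $\wt{\bxi}_{u'} = \zeta + (\wt{\bxi}_{u'} - \zeta)$ exhibits it as a sum of two bundles \emph{each} of whose Chern classes has a nonzero component in $\sH^2_{i,\fp,\cris}|_{u'}$, hence each is strongly liftable by \ref{prop: only non-liftable situation}. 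This add-and-subtract trick at a Noether--Lefschetz-special point of $\sE_i$, rather than a decomposition of $c_1(\xi)$ according to the splitting, is what makes the argument work.
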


The upshot of the above conclusion is that, when $p \gg 0$ and $s$ lies on $\sE$, even if $\xi$ is non-liftable, it still comes from specializing and deforming line bundles in characteristic $0$, albeit in a more indirect way. However, in general we expect that the complexity of possible behaviours that occur over the locus $\sE$ to grow quickly as $h^{0, 2}$ increases. 




\subsection{Applications} We will give applications of our results to general type and elliptic surfaces. Let $k$ be an algebraically closed field of characteristic $p$. 


\begin{theorem}
\label{thm: surfaces in P3 intro}
    Let $d \ge 5$ be an integer. If $p \gg 0$, then for a general degree $d$ hypersurface $Y$ in $\IP^3_k$, every $\xi \in \Pic(Y)$ is liftable. 
\end{theorem}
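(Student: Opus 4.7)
The plan is to apply Theorem~\ref{mainthm1} to the universal family of smooth degree $d$ hypersurfaces in $\IP^{3}$. Let $V = \H^{0}(\IP^{3}_{\IZ}, \sO(d))$, let $\sS_{0} \subseteq \IP(V)$ be the open subscheme parametrizing smooth hypersurfaces, and let $\sX_{0} \to \sS_{0}$ be the tautological family, polarized by $\sO(1)$. Inverting a suitable positive integer $N$, the restricted family $f : \sX \to \sS$ over $\sO_L[1/N] = \IZ[1/N]$ (with $L = \IQ$) is a polarized smooth projective family with geometrically connected fibres, $\sS$ is smooth quasi-projective with $\sS_\IC$ connected, and we are in the setup of Theorem~\ref{mainthm1}. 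Two Hodge-theoretic hypotheses on $f_\IC$ remain to be verified.

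First, \emph{large monodromy}: by the classical theorem of Beauville (building on Deligne's Picard--Lefschetz analysis), for $d \geq 3$ the image of the monodromy representation $\pi_{1}(\sS_{\IC}, s) \to \mathrm{O}(\H^{2}(Y_{s}, \IQ)_{\mathrm{prim}})$ is an arithmetic subgroup, so its Zariski closure contains the full special orthogonal group; and by the Noether--Lefschetz theorem, for very general $s$ the Néron--Severi group of $Y_{s}$ is generated by $\sO(1)$, so $\mathrm{T}^{2}(Y_{s}, \IQ) = \H^{2}(Y_{s}, \IQ)_{\mathrm{prim}}$. Second, \emph{sufficiently large period image}: via the Griffiths residue calculus, the Kodaira--Spencer map of Definition~\ref{def: large monodromy} at a point $s$ with $Y_{s} = \{F = 0\}$ is computed by the multiplication
\[
  R_F^{d} \otimes R_F^{2d - 4} \longrightarrow R_F^{3d - 4}
\]
in the Jacobian ring $R_F = \IC[x_0, \ldots, x_3] / (\partial F / \partial x_i)$, whose target is canonically dual to $\H^{2}(\sO_{Y_{s}})$ of dimension $h^{0, 2} = \binom{d - 1}{3}$. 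For $d \geq 4$ this multiplication is surjective (a standard consequence of the Artinian Gorenstein structure on $R_F$, whose socle lies in degree $4(d - 2)$), so the Kodaira--Spencer map already has image of dimension at least $h^{0, 2}$.

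Theorem~\ref{mainthm1} now yields a proper closed subscheme $\sE \subsetneq \sS$. Since $\sS$ is noetherian, $\sE$ has finitely many irreducible components; any component dominating $\Spec \IZ[1/N]$ has fibres of dimension strictly less than that of $\sS$ over each closed point of the base, while any component failing to dominate is concentrated over finitely many primes. Therefore, for $p$ sufficiently large (avoiding this finite set of primes together with those already inverted), $\sE_k \subsetneq \sS_k$ remains a proper closed subscheme for every algebraically closed field $k$ of characteristic $p$. A general degree $d$ hypersurface $Y \subset \IP^{3}_{k}$ corresponds to a $k$-point of $\sS_k \setminus \sE_k$, and Theorem~\ref{mainthm1} directly supplies a characteristic zero lift of every $\xi \in \Pic(Y)$. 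The only substantive input is the verification of the two Hodge-theoretic hypotheses on $f_\IC$; both are classical for projective hypersurfaces, and the rest of the argument consists of routine integral spreading-out together with the explicit $\sE$ produced by Theorem~\ref{mainthm1}.
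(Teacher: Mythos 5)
Your overall strategy is exactly the paper's: apply Theorem~\ref{mainthm1} to the universal family of smooth degree-$d$ surfaces in $\IP^3$, get large monodromy from Beauville plus Noether--Lefschetz, and verify the ``sufficiently large period image'' condition by an infinitesimal-Torelli-type computation (the paper cites Peters for the surjectivity of $T_s\sS_\IC \to \H^1(T_{Y_s})$ and the injectivity of $\H^1(T_{Y_s}) \to \Hom(\H^1(\Omega^1_{Y_s}), \H^2(\sO_{Y_s}))$, and then checks the numerical inequality $\dim \H^1(T_{Y_s}) \geq h^{2,0}$; your Jacobian-ring computation is the same mathematics made explicit). The spreading-out at the end is also fine.

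However, the step where you verify the period-image hypothesis has a genuine logical gap. Surjectivity of the multiplication map $R_F^{d} \otimes R_F^{2d-4} \to R_F^{3d-4}$ does \emph{not} imply that the induced linear map $R_F^{d} \to \Hom(R_F^{2d-4}, R_F^{3d-4})$ has image of dimension at least $\dim R_F^{3d-4} = h^{0,2}$: a single surjective map $m_G$ already makes the tensor-product map surjective while contributing only one dimension to the image. (In fact the surjectivity you invoke is automatic for any graded quotient of a polynomial ring generated in degree one, so it carries no information; the Gorenstein structure is not what it is buying you here.) What you actually need is the \emph{injectivity} of $G \mapsto m_G$, i.e.\ that no nonzero $G \in R_F^{d}$ annihilates $R_F^{2d-4}$. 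This is where Macaulay--Gorenstein duality genuinely enters: since $R_F^{2d-4}\cdot R_F^{d-4} = R_F^{3d-8}$ and the pairing $R_F^{d} \times R_F^{3d-8} \to R_F^{4d-8}$ into the socle is perfect, $G \cdot R_F^{2d-4} = 0$ forces $G = 0$. Granting injectivity, the image of the Kodaira--Spencer map has dimension $\dim R_F^{d} = \binom{d+3}{3} - 16$, and one checks $\binom{d+3}{3} - 16 \geq \binom{d-1}{3} = h^{0,2}$ for $d \geq 5$, which is the numerical condition (ii) of the paper's Theorem~\ref{thm: surfaces in P3}. So the conclusion of your step is correct and the repair is short, but as written the inference ``surjective multiplication, hence image of dimension at least $h^{0,2}$'' does not hold.
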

\noindent Here the word ``general'' means that $Y$ comes from a Zariski open dense subscheme of a natural parameter space. Note that the above surfaces are of general type.

Let $\pi : X \to \IP^1$ be an elliptic surface (with a zero section) of height $h := \mathrm{deg\,}(R^1 \pi_* \sO_X)^\vee$. We say that $X$ is \textit{admissible} if all geometric fibers of $\pi$ are irreducible, or equivalently the Weierstrass normal form of $X$ is smooth. This condition is satisfied on a Zariski dense open subscheme of the coarse moduli space of elliptic surfaces over $\IP^1$ with a fixed $h$. 


\begin{theorem}
\label{thm: elliptic in intro}
    If $p \gg 0$, then for a general admissible $X$ over $k$, every $\xi \in \Pic(X)$ is liftable.
\end{theorem}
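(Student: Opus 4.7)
The plan is to exhibit a suitable arithmetic family of admissible elliptic surfaces of height $h$ and then apply Theorem~\ref{mainthm1}. First I would construct a smooth quasi-projective $\sO_L[1/N]$-scheme $\sS$ parameterizing admissible elliptic surfaces over $\IP^1$ of height $h$ with a chosen zero section. A concrete choice is to let $\sS$ be the open locus in the affine space of Weierstrass data $(a(t), b(t))$ with $\deg a \le 4h$, $\deg b \le 6h$, obtained by removing both the locus where the discriminant vanishes identically and the non-admissible (reducible-fibre) locus. A simultaneous resolution of the Weierstrass total space then yields a smooth projective family $f : \sX \to \sS$ over $\sO_L[1/N]$ for a suitable $N$, with $\sS_\IC$ connected and $f$ polarized.

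Next I would verify the two hypotheses of Theorem~\ref{mainthm1} for $f_\IC$. The large monodromy condition should follow from classical results on monodromy of Weierstrass families over $\IP^1$: for a general admissible elliptic surface of height $h$, the Zariski closure of the monodromy image acting on the transcendental lattice $\mathrm{T}^{2}(\sX_s, \IQ)$ has identity component equal to the full special orthogonal group $\mathrm{SO}(\mathrm{T}^{2}(\sX_s, \IQ))$. For the sufficiently large period image condition, the Kodaira--Spencer map
\[
T_s \sS \to \Hom(\H^1(\Omega^1_{\sX_s}), \H^2(\sO_{\sX_s}))
\]
must have image of dimension at least $h^{0,2}(\sX_s) = h-1$; this can be checked by an explicit infinitesimal calculation in Weierstrass coordinates, where varying the coefficients of $a(t)$ and $b(t)$ produces enough independent deformations of Hodge filtration.

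With both hypotheses in place, Theorem~\ref{mainthm1} furnishes a proper closed subscheme $\sE \subsetneq \sS$ such that for every algebraically closed field $k$ of positive characteristic and every $s \in \sS(k) \setminus \sE(k)$, every line bundle on $\sX_s$ lifts to characteristic zero. Since $\sE \subsetneq \sS$ is proper closed over $\sO_L[1/N]$, for $p \gg 0$ the fibre $\sE_k$ is a proper closed subscheme of $\sS_k$, so its complement in $\sS_k$ is Zariski open and dense. Finally, every admissible elliptic surface of height $h$ over $k$ with a chosen zero section admits a Weierstrass model, so identifies with a geometric point of $\sS_k$, and a general such surface lies in $\sS_k \setminus \sE_k$, completing the argument.

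The main obstacle will be verifying that the large monodromy and large period image conditions genuinely persist after restricting to the admissible Weierstrass locus, and setting up the resolution $\sX \to \sS$ as a truly smooth projective family over a mixed-characteristic base (which will likely require enlarging $N$ to invert the small primes that obstruct a uniform resolution of the Kodaira singular fibres). Both hypotheses are standard for the universal family over $\IP^1$, but checking that cutting down to the admissible open does not collapse the monodromy or period image, and handling the arithmetic resolution uniformly in the family, is where the technical bookkeeping lies.
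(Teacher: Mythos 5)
Your proposal follows essentially the same route as the paper: parameterize admissible elliptic surfaces of height $h$ by their Weierstrass coefficients over $\IZ[1/6]$, verify large monodromy and the Kodaira--Spencer condition for the resulting family (the paper cites known results for both, the latter requiring $h \ge 3$), and apply \ref{mainthm1} together with the observation that for $p \gg 0$ the fibre $\sE_k$ remains a proper closed subscheme. The one simplification you miss is that no simultaneous resolution is needed: admissibility (all geometric fibres irreducible) is precisely the condition that the projective Weierstrass model is already smooth, so the paper simply takes $\sS$ to be the open locus where the Weierstrass total space is smooth, over which the family is automatically smooth and projective.
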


\noindent When $h = 3$, \ref{thm: h = 2 case rough} allows us to remove the word ``general'' at the expense of a weaker conclusion: 

\begin{theorem}
    If $p \gg 0$, $h = 3$ and $X$ is admissible, then for every $\xi \in \Pic(X)$, we may equi-characteristically deform $(X, \xi)$ to another pair $(X', \xi')$, such that some positive multiple of $\xi'$ is a sum of liftable line bundles.
\end{theorem}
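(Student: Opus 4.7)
The plan is to derive this statement directly from Theorem~\ref{thm: h = 2 case rough} applied to the natural arithmetic parameter space of admissible height-$3$ elliptic surfaces. Because $X$ admits a zero section and has $h = 3$, one has $h^{0,1}(X) = 0$ and $\chi(\sO_X) = h$, which forces $h^{0,2}(X) = h - 1 = 2$ at every complex point of the moduli. This places us exactly in the regime where Theorem~\ref{thm: h = 2 case rough} applies, so the work reduces to exhibiting the right family and checking the remaining hypotheses.

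Concretely, I would realize $X$ as a $k$-point of a parameter space $\sS$ over $\sO_L[1/N]$ whose complex points classify admissible height-$3$ elliptic surfaces over $\IP^1$ with a zero section, equipped with its universal family $f : \sX \to \sS$ and the natural polarization. This is the same space underlying Theorem~\ref{thm: elliptic in intro}, since admissibility is a Zariski open condition on Weierstrass data cutting out irreducibility of the geometric fibers of $\pi$. I would then invoke the verification that $f_\IC$ has large monodromy and sufficiently large period image -- the work already required for Theorem~\ref{thm: elliptic in intro} -- which supplies the hypotheses of Theorem~\ref{mainthm1}. What remains is the extra pointwise Kodaira--Spencer condition in Theorem~\ref{thm: h = 2 case rough}: that the map~(\ref{eqn: Kodaira-Spencer}) has rank $\geq 2$ at \emph{every} $s \in \sS(\IC)$, not merely generically. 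Using the explicit description of this map for elliptic fibrations as multiplication on sections of line bundles over $\IP^1$, admissibility rules out the degenerations that could drop the rank below $h^{0,2} = 2$. Once this uniform bound is in hand, Theorem~\ref{thm: h = 2 case rough} applies to $(s, \xi)$ and directly produces the desired deformation $(X', \xi')$.

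The main obstacle I expect is the uniform pointwise Kodaira--Spencer bound. The generic version is already implicit in the ``sufficiently large period image'' verification for Theorem~\ref{thm: elliptic in intro}, but upgrading it to a pointwise statement across the entire admissible locus requires an explicit calculation on the Weierstrass data, using admissibility to exclude the reducible-fiber configurations that would degenerate the relevant multiplication map. Once that bound is secured, the remainder of the argument is a mechanical unpacking of Theorem~\ref{thm: h = 2 case rough}, with the deformation $(X', \xi')$ supplied by the conclusion of that theorem applied in the elliptic setting.
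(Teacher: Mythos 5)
Your proposal is correct and follows essentially the same route as the paper: the paper realizes $X$ as a fibre of the universal Weierstrass family $\sX \to \sS$ of admissible height-$h$ elliptic surfaces, verifies large monodromy via the literature and the pointwise Kodaira--Spencer bound for $h \ge 3$ (Lemma~\ref{lem: Torelli for elliptic}, which gives full rank $10h-2$ at every complex point by citing a known infinitesimal Torelli result whose hypothesis is checked by counting singular fibres), and then applies Theorem~\ref{thm: h = 2 case} exactly as you describe. The only cosmetic difference is that the paper obtains the uniform Kodaira--Spencer bound by invoking that external result rather than by the direct multiplication-map computation on Weierstrass data you sketch.
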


The precise forms of the above theorems are given in \S\ref{sec: examples}.  

Finally we remark that elliptic surfaces are of particular interest because their line bundles give rise to rational points on their generic fibers, which are elliptic curves over function fields. Hence lifting results in the style of \ref{thm: elliptic in intro} suggest a way of constructing all rational points on an elliptic curve over a global function field by eventually appealing to the Lefschetz $(1, 1)$-theorem. This idea was indeed used before, in an earlier work of the second author \cite{HYZ} joint with Hamacher and Zhao, to establish a new case of the function field BSD conjecture. It is our hope that \ref{thm: elliptic in intro} is a start for future progress.





\subsection{Methods}
\label{sec: methods}
The key new idea in our work is that one can apply the theory of Hodge-theoretic unlikely intersections over $\IC$ to understand the liftability of line bundles in positive characteristic. This is noteworthy because we are using highly transcendental tools to answer a completely algebraic question. To explain how this works, let us first give an informal overview of how line bundles deform, starting with the $\IC$-fibre $\sS_{\IC}$. If $Z \subset \sS_{\IC}$ is a component of the Noether-Lefschetz locus coming from deforming a line bundle $\xi$ on $\sX_s$ for some $s \in \sS(\IC)$, then analytically locally at $s$ one can view $Z$ as the set of points $t$ where parallel transport of the Betti class $c_1(\xi)$ to $t$ remains in $\Fil^{1} \H^2_\dR(\sX_{t}/\IC)$, the middle piece of the Hodge filtration. Then it is not hard to see that $Z$ is locally cut out at $s$ by $h^{0, 2}(\sX_s)$ equations.

Now, by generalizing a result of Deligne \cite[Prop.~1.5]{Del02}, we explain in \ref{thm: Deligne generalized} that under some mild assumptions the same statement also applies on the integral level. Suppose that $k$ is an algebraically closed field of characteristic $p > 0$, $s$ is a $k$-point on $\sS$ and $\xi \in \Pic(\sX_s)$, and assume for simplicity that $\sO_L[1/N]$ is unramified over $\IZ[1/N]$. Then the maximal locus $\Def(\xi)$ inside the formal completion of $\sS \tensor_{\sO_L[1/N]} W(k)$ at $s$ to which $\xi$ deforms is cut out by $h^{0, 2}(\sX_s)$ equations. The analogue of the locus $Z$ in the preceding paragraph is the Zariski closure of $\Def(\xi)$ in $\sS$, which we shall denote by $\mathcal{Z}$. But --- and this where the connection to unlikely intersections appears --- passing to the integral level gains us a dimension. More specifically, if $\xi$ does not deform to characteristic zero, which is to say that $\Def(\xi)$ has no characteristic zero fiber, then the codimension of $\sZ_{k}$ inside $\sS_{k}$ must be \emph{at most} $h^{0, 2}(\sX_s) - 1$.

Let us first explain why it would be unusual to see such a phenomenon in characteristic zero. Indeed, if we consider again the locus $Z \subset \sS_{\IC}$ in the first paragraph, then we can alternatively define the germ of $Z$ at $s$ in the following way: consider a Hodge-theoretic period map $\psi : B \to \che{D}$ on a small ball $B \subset \sS(\IC)$ around $s$, and define $Z \cap B$ as $\psi^{-1}(\che{D}_{c_1(\xi)})$, where $\che{D}_{c_1(\xi)} \subset \che{D}$ is a suitable flag subvariety of codimension $h^{0, 2}$ defined by $c_1(\xi)$. Then if $\Gamma_{\psi} \subset \sS_{\IC} \times \che{D}$ is the graph of $\psi$, the intersection between $\Gamma_{\psi}$ and $\sS_{\IC} \times \che{D}_{c^{1}(\xi_{s})}$ is now unlikely: it has codimension at most $\dim \che{D} + (h^{0,2} - 1)$, smaller than the ``expected'' codimension of $\dim \che{D} + h^{0,2}$. 

Let us call such a $Z$ giving rise to an unlikely intersection of this type an ``atypical Hodge locus''.\footnote{Strictly speaking, to be in agreement with \cite{BKU}, $\che{D}$ should be an orbit of the generic Mumford-Tate group. But under the large monodromy assumption we may simply take $\che{D}$ to be the variety of all polarized Hodge flags. } Hodge loci which are atypical in this sense are studied in the recent paper \cite{BKU}, where the authors prove the following ``Geometric Zilber-Pink'' statement: the atypical Hodge locus of positive period dimension associated to an integral variation of Hodge structures $\IV$ on $S$ with $\IQ$-simple monodromy lies in finitely many subvarieties strictly contained in $S$. Thus the phenomenon just described can only occur for $Z$ lying inside one of these subvarieties; it in particular does not occur for $Z$ intersecting a Zariski open subset of $S$.

\vspace{0.5em}

At this point one can observe that what is needed for our main theorems is a kind of analogue of this ``Geometric Zilber-Pink'' result at the integral level. Tools which will make this possible were developed in recent work \cite{urbanik2022algebraic} by the first author. More specifically, the paper \cite{urbanik2022algebraic} gives a purely algebro-geometric way of obtaining the results of \cite{BKU} which also allows one to study ``integral'' unlikely intersections. The idea is that one studies infinitesimal period maps --- formal objects giving maps $\psi: \ID_r \to \che{D}$ where $\ID_r \subset \sS$ is an infinitesimal disk of order $r$ --- and looks at the locus $\sA_{r}$ in a jet space $J^d_r \sS$ of disks $\ID^d_r \subset \sS$ with ``dimension'' $d = \dim \sS_{\IC} - (h^{0, 2} - 1)$ lying in the inverse image under some such $\psi$ of carefully chosen flag subschemes.\footnote{In the proof of \ref{Econstrprop} where this argument takes place we will find it convenient not to construct $\sA_r$ directly, but instead understand it through a larger locus $\sI_r$ lying in the product of a bundle over $J^d_r \sS$ with the jet space of a flag variety.} Similar, but more algebro-geometric, reasoning to \cite{BKU} --- which ultimately reduces as in \cite{BKU} to an application of the Ax-Schanuel Theorem \cite{zbMATH07066495} --- shows that $\sA_{r,\IC}$ projects to a proper closed subvariety of $\sS_{\IC}$ for $r \gg 0$. But then the Noetherianity of $\sA_{r}$ implies the same fact at the integral level, and shows that ``unlikely intersections'' of positive dimension are constrained by intersections of the same type in characteristic zero for $p \gg 0$. 

\vspace{0.5em}

Let us make two further comments about our large monodromy and period image hypotheses. The methods of this paper should apply even when $f$ does not have large monodromy, although the reasoning becomes more complicated. In order to apply the Ax-Schanuel theorem, the flag variety $\che{D}$ must be taken to be an orbit of the algebraic monodromy group. If this group shrinks, so does $\che{D}$, which means that the expected codimension of $Z$ becomes smaller. One then has to take this into account at the crystalline level, which to our knowledge requires knowing that the global cycles defining the monodromy group satisfy absolute-Hodge-like properties. We plan future work to deal with this more refined situation. 

Secondly, the ``sufficiently large period image'' assumption is necessary to ensure that if $Z$ is atypical, it is also positive-dimensional (in the sense of period dimension). As we explain in \S\ref{sec: non-examples} below, our results are false without this assumption, and the fact that geometric Zilber-Pink reasoning requires the intersections to be positive dimensional allows one to see the necessity of this assumption in a different way. 

\vspace{0.5em}

\paragraph{Comment on Effectivity} The locus $\sE$ appearing in the statements of the theorems and various loci which appeared in the sketch above can in principle be computed by an algorithm run on a sufficiently powerful computer, given explicit projective equations for the family as input. In the case of the generic fibres of these loci (i.e., if we were to compute them over a characteristic zero field), the necessary theory for writing down these algorithms is developed in \cite{zbMATH07673321}. In particular, \cite{zbMATH07673321} gives explicit algorithms for computing the Hodge bundle, Hodge filtration, and Gauss-Manin connection from a set of projective equations as well as for computing all of the jet-related constructions we will use. Subject to limitations in the computational literature concerning algebro-geometric constructions over a more general base ring $R = \mathcal{O}_{L}[1/N]$ --- for instance, we are not aware of a detailed algorithmic approach to Chevalley's Theorem on images of constructible sets in this context --- this could be generalized to compute the locus in our paper as well.

\subsection{Notations and Conventions}
\label{sec: conventions}
(i) Let $S$ be a scheme. For a point or geometric point $s$ on $S$, we write $k(s)$ for the field of definition for $s$. An alg-geometric point on $S$ is a geometric point $s \to S$ such that $k(s)$ is algebraically (as opposed to just separably) closed. For a geometric point $s \to S$, we denote its image on the underlying topological space of $S$ by $s^\flat$. We will often consider smooth projective morphisms $\sX \to S$ such that $h^{i, j}(\sX_s) := \dim_{k(s)} \H^j(\Omega^i_{\sX_s})$ is independent of $s \in S$. In this case, we will simply write this number as $h^{i, j}$ when the family $\sX/S$ is understood. 

(ii) Suppose that $R' \to R$ is a morphism between complete local rings. Then we define the \textit{scheme theoretic image} of the induced morphism $\Spf(R) \to \Spf(R')$ to be the formal subscheme $\Spf(R'') \subseteq \Spf(R')$, where $R'' = R'/(\ker(R' \to R))$. Note that $\Spec(R'') \subseteq \Spec(R')$ is the scheme theoretic image of $\Spec(R) \to \Spec(R')$ in the usual sense \cite[01R5]{stacks-project}. We abbreviate ``local complete intersection'' to ``lci''. 

(iii) When $S$ is a smooth complex variety, by a variation of Hodge structures $\IV$ on $S$ we shall mean a $\IQ$-VHS unless otherwise noted. Moreover, we write the $\IQ$-local system as $\IV_B$ and the filtered flat vector bundle as $\IV_\dR$, so that $\IV$ is given by the pair $(\IV_B, \IV_\dR)$. Let $T$ be a connected smooth $\IC$-variety and $t \in T$ be a point. If $\IV_B$ is a $\IQ$-local system, let $\mathrm{Mon}(\IV_B, t)$ be the Zariski closure of $\pi_1(T, t)$ in $\GL(\IV_{B, t})$ and let $\mathrm{Mon}^\circ(\IV_B, t)$ denote the identity component. We call the isomorphism class of $\mathrm{Mon}^\circ(\IV_B, t)$, which is independent of the choice of $t$, the \textit{algebraic monodromy group.} When $T$ is irreducible but not necessarily smooth, the algebraic monodromy group is understood to be that of $\IV_B$ restricted to the smooth locus of $T$. Similarly, if $T$ is a connected normal Noetherian scheme and $t$ is a geometric point, and $\sH_\ell$ is an $\IQ_\ell$-local system for a prime $\ell$ invertible in $\sO_T$, define $\Mon(\sH_\ell, t)$ to be the Zariski closure of $\pi_1^\et(T, t)$ in $\GL(\sH_{\ell, t})$ and let $\Mon^\circ(\sH_\ell, t)$ be its identity component.

(iv) When $S$ is a scheme and $\sX \to S$ is a smooth projective morphism with geometrically connected fibers, we use $\shP_{\sX/S}$ to denote the relative Picard scheme (see \S\ref{sec: generalities} below). If $k$ is a field and $X$ is a geometrically connected smooth projective variety over $k$, then we let $\shP^0_{X/k}$ denote the identity component of $\shP_{X/k}$. The N\'eron-Severi group $\NS(X)$ of $X/k$ is defined to be the group of connected components $\shP_{X/k} / \shP^0_{X/k}$. For two line bundles $\xi, \xi' \in \Pic(X)$, we say that they are algebraically equivalent (denoted by $\xi \sim \xi'$) if they give the same class in $\NS(X)$; similarly, we write $\xi \sim_\IQ \xi'$ if their classes in $\NS(X)_\IQ$ differ by a $\IQ^\times$-multiple.

\section{Deformation Theory}

\subsection{Generalities on Picard Schemes}
\label{sec: generalities}
We briefly recall some basic facts about the (relative) Picard scheme and set up some preliminary lemmas. \textit{In this section, we shall always assume that $S$ is a base scheme and $f : \sX \to S$ is a smooth and projective morphism with connected geometric fibers.}

By a fundamental theorem of Artin and Grothendieck (\cite[\S8.4~Thm~3]{BLR}, see also \cite[Thm~9.4.8]{Kleiman}), the sheafification of the presheaf on the fppf site of $S$ defined by 
$$ T \mapsto \Pic(\sX_T)/\Pic(T) $$
is representable by a separated scheme $\shP_{\sX/S}$ over $S$ which is locally of finite type; moreover, every closed subscheme of $\shP_{\sX/S}$ which is of finite type over $S$ is projective over $S$. We often implicitly use the fact that the formation of $\shP_{\sX/S}$ commutes with base change (\cite[Ex.~9.4.4]{Kleiman}). Another handy fact we shall often use is that if $f$ admits a section, then $\shPic_{\sX/S}(S) = \Pic(\sX)/ \Pic(S)$ (\cite[\S8.1~Prop.~4]{BLR}), i.e., sheafification does not add new global sections.

The following fact is a special case of \cite[Prop.~9.5.20]{Kleiman}. 
\begin{proposition}
\label{prop: Kleiman}
    Assume that for every geometric point $s \to S$, the identity component $\shP^0_{\sX_s/s}$ of $\shP_{\sX_s/s}$ is smooth of a fixed dimension. Then $\shP_{\sX/S}$ has an open and closed subscheme $\shP^0_{\sX/S}$ which is of finite type over $S$ and whose fiber above a geometric point $s \to S$ is $\shP^0_{\sX_s/s}$. Moreover, if $S$ is reduced, then $\shP^0_{\sX/S}$ is smooth over $S$. 
\end{proposition}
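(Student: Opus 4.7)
The plan is to reduce to \cite[Prop.~5.20]{Kleiman}, checking that the hypotheses of Kleiman's statement are satisfied and sketching the main ingredients. Write $d$ for the common fibrewise dimension of the smooth identity components $\shP^0_{\sX_s/s}$, and let $e : S \to \shP_{\sX/S}$ be the identity section coming from the trivial line bundle.

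First I would identify the candidate subscheme. By the Artin-Grothendieck theorem recalled above, every finite-type closed subscheme of $\shP_{\sX_s/s}$ is projective; in particular each $\shP^0_{\sX_s/s}$ is smooth projective of dimension $d$. Since $\shP_{\sX/S} \to S$ is locally of finite type, upper semicontinuity of fibre dimension makes the locus $U \subseteq \shP_{\sX/S}$ where the relative dimension equals $d$ an open subscheme. I would then let $\shP^0_{\sX/S} \subseteq U$ be the union of the connected components meeting the image of $e$.

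Next I would check that $\shP^0_{\sX/S}$ is open and closed in $\shP_{\sX/S}$ and of finite type over $S$. Openness is built into the construction. For closedness, the point is that fibrewise the identity component is projective, hence closed in $\shP_{\sX_s/s}$; the content of Kleiman's argument is to upgrade this fibrewise fact to a closed immersion inside $\shP_{\sX/S}$ using rigidity of the group-scheme structure combined with the properness of the fibres. Finite-typeness then follows because $\shP^0_{\sX/S} \to S$ is separated with proper fibres of a fixed dimension $d$, hence proper.

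Finally, for the smoothness claim when $S$ is reduced, my plan is to apply the infinitesimal lifting criterion along the identity section and then translate. Given a square-zero thickening $T \hookrightarrow T'$ of $S$-schemes and a $T$-point of $\shP^0_{\sX/S}$ lying over some $s \in S$, the obstruction to extending across the thickening lies in a group built from $\H^2(\sO_{\sX_s})$; smoothness of the fibre $\shP^0_{\sX_s/s}$ kills this obstruction fibrewise, and reducedness of $S$ allows one to globalize the resulting lifts. The hard part will be the closedness step above --- showing the fibrewise connected-component decomposition of $\shP_{\sX/S}$ glues into a closed subscheme across $S$ --- which is precisely the technical heart that Kleiman's Proposition~5.20 encapsulates.
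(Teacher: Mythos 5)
The paper offers no argument for this proposition at all: it simply records that the statement is a special case of \cite[Prop.~5.20]{Kleiman}, which is exactly where your proposal also locates the technical heart, so your approach is essentially the same as the paper's. Two of your auxiliary shortcuts are not literally correct as stated --- a separated morphism of finite type with proper fibres need not be proper, and defining $\shP^0_{\sX/S}$ as the union of connected components of the total space meeting the identity section presupposes, rather than proves, that the fibrewise identity components assemble into open-and-closed pieces with the right fibres --- but since you explicitly defer these points to Kleiman's proposition, the reduction goes through just as in the paper.
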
 

\begin{definition}
    We will often consider pairs of the form $(s, \xi)$ where $s \to S$ is a alg-geometric point and $\xi \in \Pic(\sX_s)$ is a line bundle on $\sX_s$. We refer to such a pair as a \textbf{LB-pair}. Note that a LB-pair corresponds bijectively to an alg-geometric point on $\shP_{\sX/S}$.
\end{definition} 



\begin{definition}
\label{def: localizations}
    Let $\shP(S,s,\xi)$ be the localization of $\shP_{\sX/S}$ at $(s, \xi)^\flat$ (i.e., the spectrum of the local ring of $\shP_{\sX/S}$ at $(s, \xi)^\flat$). We say that $(s, \xi)$ is \textbf{liftable} (resp. \textbf{strongly liftable}) if $\shP(S, s, \xi)$ has a non-empty (resp. topologically dense) characteristic $0$ fiber. When $S$ is an integral scheme, we say that $(s, \xi)$ is \textbf{global} (on $S$) if $\shP(S, s, \xi)$ dominates $S$. 
\end{definition}

Below we give a few lemmas about the above definition. 

\begin{lemma}
\label{lem: Zariski closure is flat}
    Let $(V, (\pi))$ be a DVR with fraction field $F$ and $S$ be a $V$-scheme. Let $Y$ be an $F$-scheme admitting a quasi-compact $V$-morphism to $S$. Then the scheme-theoretic image $Z$ of $Y$ is flat over $V$. 
\end{lemma}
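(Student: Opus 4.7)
My plan is to reduce to the affine case, identify the scheme-theoretic image with the quotient by the kernel of a ring map landing in an $F$-algebra, and then observe that such a quotient is automatically $\pi$-torsion-free over $V$.

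First I would use that flatness is local on the target, combined with the fact that a module over the DVR $V$ is flat if and only if it has no $\pi$-torsion, to reduce the claim to the following: for every affine open $\Spec A \subseteq S$, the quotient $A/I = \Gamma(Z \cap \Spec A, \sO_Z)$ has no nonzero $\pi$-torsion. Since quasi-compact morphisms are preserved under base change, I may cover the preimage of $\Spec A$ in $Y$ by finitely many affine opens $\Spec B_1, \ldots, \Spec B_n$. By the standard description of the scheme-theoretic image in the quasi-compact setting (cf.\ \cite[01R8]{stacks-project}, and noting that scheme-theoretic image commutes with restriction to opens of the target), the defining ideal of $Z$ inside $\Spec A$ is then $I = \ker\bigl(A \to \prod_i B_i\bigr)$.

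The key observation would be that, since $Y$ is an $F$-scheme, each composition $\Spec B_i \hookrightarrow Y \to \Spec V$ factors through $\Spec F$. Hence each $B_i$ is naturally an $F$-algebra, and so is $\prod_i B_i$; in particular the latter is $\pi$-torsion-free as a $V$-module. Since $A/I$ embeds into $\prod_i B_i$, it inherits $\pi$-torsion-freeness, completing the argument.

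I do not expect any serious obstacle in executing this plan. The only point requiring some care is invoking the correct affine-chart description of the scheme-theoretic image via the kernel of $A \to \prod_i B_i$, which is precisely where the quasi-compactness hypothesis enters; everything else is a formal consequence of the $F$-scheme structure on $Y$.
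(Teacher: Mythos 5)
Your proof is correct and follows essentially the same route as the paper: reduce to the affine case, identify $Z$ with $\Spec(A/\ker\varphi)$ via \cite[01R8]{stacks-project}, and note that $\pi$ maps to a unit in the target $F$-algebra, so $A/\ker\varphi$ is $\pi$-torsion-free and hence flat over $V$. Your version is in fact slightly more careful than the paper's, which simply assumes $Y$ affine, whereas you handle general quasi-compact $Y$ by taking a finite affine cover and embedding $A/I$ into $\prod_i B_i$.
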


\begin{proof}
    We may assume that $S = \Spec(A)$ and $Y = \Spec(B)$ are affine. Suppose that $Y \to S$ is given by a morphism of $V$-algebras $\varphi: A \to B$. Then $Z$ is given by $A / \ker(\varphi)$ (\cite[01R8]{stacks-project}). The flatness of $Z$ is equivalent to the $\pi$-torsion-freeness of $A / \ker(\varphi)$, i.e., if $a \not\in \ker(\varphi)$, then $\pi a \not\in \ker(\varphi)$. But this is clear because $\varphi(\pi)$ is a unit in $B$ by assumption. 
\end{proof}

\begin{lemma}
\label{lem: Deligne's little lemma}
 Let $k$ be a perfect field of characteristic $p > 0$. Set $W := W(k)$ and $K_0 = W[1/p]$. Suppose that $R$ is a Noetherian complete local ring which contains $W$ as a coefficient ring. If $R$ is flat over $W$, then $R$ admits a local $W$-morphism $R \to \sO_K$ for some finite extension $K$ of $K_0$. 
\end{lemma}

\begin{proof}
    We can adapt an argument from the proof of \cite[Thm~1.6]{Del02}. As $R$ is flat over $W$, $p$ is not a zero divisor in $R$. Let $d = \dim R - 1$ ($\dim$ denotes Krull dimension). By \cite[0BWU]{stacks-project}, there exists $x_1, \cdots, x_d$ such that $(p, x_1, \cdots, x_d)$ is a system of parameters for $R$. Consider the $W$-morphism $R_0 := W[\![t_1, \cdots, t_d]\!] \to R$ which sends $t_i$ to $x_i$. Then \cite[Lem.~16]{Cohen}\footnote{In \textit{loc. cit.}, the formal power series ring is denoted with $\{ - \}$ as opposed to $[\![-]\!]$.} tells us that $R$ is finite over $R_0$. Set $R' = R / (x_1, \cdots, x_d)$. Then the morphism $W \to R'$ is finite, as it is the base change of $R_0 \to R$ along the morphism $R_0 \to W$ which sends each $t_i$ to $0$. Since $\dim R' = 1$ by \cite[02IE]{stacks-project}, $\Spec(R')$ must have non-empty generic fiber over $\Spec(W)$. Therefore, there exists a morphism $R' \to K$ for some finite extension $K$ of $K_0$, which necessarily factors through $\sO_K$ as $R'$ is finite over $W$. 
\end{proof}

\begin{proposition}
\label{prop: liftability}
    Let $k$ be an algebraically closed field of characteristic $p > 0$ and suppose that $S$ is a scheme of finite type over $W := W(k)$. A LB-pair $(s \in S(k), \xi \in \Pic(\sX_s))$ is liftable if and only if there exists a finite extension $K$ of $K_0 := W[1/p]$ such that there exists a morphism $\wt{s} : \Spec(\sO_K) \to S$ extending $s$ and a line bundle $\wt{\xi}$ on $\sX_{\wt{s}}$ whose special fiber is $\xi$.
\end{proposition} 
\begin{proof}
The ``if'' part follows from definition: Up to replacing $K$ by a finite extension, $\wt{s}$ lifts to a morphism $\Spec(\sO_K) \to \shP_{\sX/S}$ which maps the special point to $(s, \xi)$.

Conversely, if $\shP(S, s, \xi)$ contains a characteristic $0$ point, then by taking its scheme-theoretic image we obtain a closed subscheme $B \subseteq \shP_{\sX/S}$ which contains the closed point $(s, \xi)$ and is flat over $W$ by \ref{lem: Zariski closure is flat}. By \ref{lem: Deligne's little lemma}, the formal completion of $B$ at $s$ admits some $\sO_K$-point which lifts $(s, \xi)$ for some finite extension $K/K_0$. This translates to the desired statement.  
\end{proof}

\begin{remark}
    \begin{enumerate}[label=\upshape{(\alph*)}]
        \item Roughly speaking, being ``strongly liftable'' means a LB-pair $(s, \xi)$ remains liftable after small pertubation: there exists an open neighborhood $U$ of $(s, \xi)^\flat$ in $\shP_{\sX/S}$ such that every LB-pair $(t, \zeta)$ with $(t, \zeta)^\flat \in U$ is liftable. Clearly, if $\shP(S, s, \xi)$ is flat over $\IZ_{(p)}$ for $p = \mathrm{char\,}k(s)$, then $(s, \xi)$ is strongly liftable. 
        \item Being liftable or strongly liftable is a closed property: Suppose that $(s, \xi)$ and $(t, \zeta)$ are two LB-pairs and $(t, \zeta)^\flat$ specializes to $(s, \xi)^\flat$ on $\shP_{\sX/S}$. If $(t, \zeta)$ is (strongly) liftable, then so is $(s, \xi)$. 
    \end{enumerate}
\end{remark}

\subsubsection{} \label{sec: specialization} Below we shall assume that \textit{$S$ is a notherian integral scheme with generic point $\eta$}, and give some remarks on specializing line bundles. Let $\bar{\eta}$ be a geometric point over $\eta$ and $s$ be any other geometric point. Then we can choose a morphism $\gamma : \Spec(R) \to S$ where $R$ is a strictly Henselian DVR, such that $\gamma$ sends its generic point $\eta_0$ to $\eta$ and special point $s_0$ to $s^\flat \in S$. Let $\bar{\eta}_0$ and $\bar{s}_0$ be geometric points over $\eta_0$ and $s_0$ such that their morphisms to $S$ factor through $\bar{\eta}$ and $s$ respectively. By \cite[Prop.~3.1]{MPJumping}, there are natural identifications $\NS(\sX_{\bar{\eta}}) = \NS(\sX_{\bar{\eta}_0})$ and $\NS(\sX_{\bar{s}}) = \NS(\sX_{\bar{s}_0})$. This induces a specialization map $\mathrm{sp}_\gamma : \NS(\sX_{\bar{\eta}}) \to \NS(\sX_{\bar{s}})$ (cf. \cite[Prop.~3.6]{MPJumping} and its proof). Let us also write $\gamma$ for the triple $(\gamma, \bar{\eta}_0, s_0)$.  We call such a triple \textit{a path} from $\bar{\eta}$ to $s$ as it induces an \'etale path on $S$ from $\bar{\eta}$ to $s$ in the sense of \cite[Tag~03VD]{stacks-project}, which we also denote as $\gamma$. Using \cite[054F]{stacks-project}, one deduces that the LB-pair $(s, \xi)$ is global on $S$ if and only if $\xi$ lies in the image of $\mathrm{sp}_\gamma$ for some choice of $\gamma$. 

\begin{lemma}
    \label{lem: generic LB}
    If $S$ is in addition normal, then we have the following. 
    \begin{enumerate}[label=\upshape{(\alph*)}]
        \item The natural sequence $0 \to \Pic(S) \to \Pic(\sX) \to \Pic(\sX_\eta) \to 0$ is exact. 
        \item The image of $\mathrm{sp}_\gamma \tensor \IQ$ is independent of the choice of $\gamma$. 
        \item Suppose either $\mathrm{char\,} k = 0$ or $\mathrm{char\,} k = p > 0$ but $\NS(\sX_{\bar{\eta}})$ has no $p$-torsion. Then (b) is true for the image of $\mathrm{sp}_\gamma$ integrally. 
        \item If $(s, \xi)$ a global LB-pair on $S$ and $\mathrm{Mon}(R^2 f_{\et*} \IQ_\ell, s)$ is connected for some prime $\ell$ invertible in $\sO_S$, then there exists a relative line bundle $\bxi \in \Pic(\sX)$ such that $\bxi_s \sim_\IQ \xi$. 
    \end{enumerate} 
\end{lemma}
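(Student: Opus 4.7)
Part (a) assembles three standard facts about the smooth projective morphism $f$. Since $f$ is proper with geometrically connected fibres, $f_{*}\sO_{\sX}=\sO_{S}$, and the projection formula immediately gives injectivity of $f^{*}:\Pic(S)\to\Pic(\sX)$. Surjectivity of the restriction $\Pic(\sX)\to\Pic(\sX_{\eta})$ follows from taking the schematic closure of a Weil divisor representing a given $\sL_{\eta}$ in the regular scheme $\sX$ (which is regular since $S$ is normal and $f$ is smooth). For exactness in the middle, a line bundle $\sM\in\Pic(\sX)$ trivial on $\sX_{\eta}$ admits a rational section whose Cartier divisor is supported on $f^{-1}(S\setminus\eta)$; flatness and smoothness of $f$ together with purity identify each vertical prime divisor as the scheme-theoretic pullback of a codimension-one subscheme of $S$, so $\sM\in f^{*}\Pic(S)$.

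For (b) and (c), I plan to exploit monodromy functoriality of specialisation. Proposition~\ref{prop: Kleiman} makes $\shP^{0}_{\sX/S}$ open and closed in $\shP_{\sX/S}$, so the quotient $\shP_{\sX/S}/\shP^{0}_{\sX/S}$ is a separated group space over $S$ with geometric fibres $\NS(\sX_{\bar{s}})$. After inverting the residue characteristics dividing $\NS$-torsion, this quotient is generically étale on $S$ and carries a canonical $\pi_{1}^{\et}(S,\bar{\eta})$-action by group automorphisms. For two paths $\gamma,\gamma'$ with $\gamma'=\sigma\gamma$ and $\sigma\in\pi_{1}^{\et}(S,s^{\flat})$, functoriality yields
\[
\mathrm{sp}_{\gamma'}(\zeta)=\sigma\cdot\mathrm{sp}_{\gamma}(\zeta)=\mathrm{sp}_{\gamma}\bigl((\gamma^{-1}\sigma\gamma)\cdot\zeta\bigr).
\]
Since the inner automorphism $\gamma^{-1}\sigma\gamma\in\pi_{1}^{\et}(S,\bar{\eta})$ preserves $\NS(\sX_{\bar{\eta}})$ as a subset, the images of $\mathrm{sp}_{\gamma}$ and $\mathrm{sp}_{\gamma'}$ coincide; working after $\otimes\,\IQ$ sidesteps the $p$-torsion issue and delivers (b), while the hypothesis of (c) removes this issue already at the integral lattice level.

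Part (d) is the most substantive. Globality of $(s,\xi)$ gives an irreducible component $Q$ of $\shP_{\sX/S}$ through $(s,\xi)^{\flat}$ dominating $S$; its image $\bar{Q}\subseteq\shP_{\sX/S}/\shP^{0}_{\sX/S}$ is finite étale over a nonempty open $U\subseteq S$ containing $s$, and the fibre $\bar{Q}_{s}$ is a single $\pi_{1}^{\et}(S,s)$-orbit containing the class of $\xi$. The key step is to show this orbit collapses to a single point after $\otimes\,\IQ$. For this I will embed $\NS(\sX_{s})\otimes\IQ_{\ell}\hookrightarrow\H^{2}_{\et}(\sX_{s},\IQ_{\ell}(1))$ via the cycle class map, a $\pi_{1}^{\et}$-equivariant injection whose image is $\pi_{1}^{\et}$-stable. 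Since the stabiliser of a subspace in $\GL$ is Zariski closed and $\pi_{1}^{\et}$ is Zariski dense in $\Mon(R^{2}f_{\et *}\IQ_{\ell},s)$, this image is also $\Mon$-stable. The induced algebraic representation $\Mon\to\GL(\NS(\sX_{s})\otimes\IQ_{\ell})$ has image that is simultaneously connected (inherited from $\Mon$) and finite (since $\NS$ is discrete), hence trivial. Therefore every element of $\bar{Q}_{s}$ equals $[\xi]$ in $\NS(\sX_{s})\otimes\IQ$. I then take the norm of the universal line bundle along the finite étale morphism $\sX_{\bar{Q}}\to\sX_{U}$, extend the resulting class to $\sX$ via (a) by Zariski closure of its associated Cartier divisor, and verify that the outcome $\bxi\in\Pic(\sX)/\Pic(S)$ satisfies $\bxi_{s}\sim\sum_{p\in\bar{Q}_{s}}[\xi_{p}]\sim_{\IQ}\deg(\bar{Q}/S)\cdot[\xi]\sim_{\IQ}[\xi]$.

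The hardest step is the cycle-class argument in (d): passing from $\pi_{1}^{\et}$-stability of the image of the cycle class map to $\Mon$-stability requires care with Zariski density, and the subsequent extraction of triviality from the connected-and-finite dichotomy is the technical heart of the argument. A subsidiary obstacle is that $\bar{Q}\to S$ need only be finite étale over an open $U\subsetneq S$; extending the norm construction from $\sX_{U}$ to all of $\sX$ is where I rely on (a), via the identification of $\Pic(\sX)/\Pic(S)$ with the appropriate global sections of $\shPic_{\sX/S}$.
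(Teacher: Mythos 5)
Your parts (a)--(c) follow the standard route: the paper simply cites EGA IV, $\mathrm{Err}_{\mathrm{IV}}$ Cor.~21.4.13 for (a), and for (b), (c) runs essentially your path-comparison argument. One remark: the fact you assert without justification --- that every element of $\pi_1^{\et}(S,\bar\eta)$ preserves $\NS(\sX_{\bar\eta})$ inside $\H^2_{\et}(\sX_{\bar\eta},\what{\IZ}^p)$ --- is precisely where normality enters. One needs the surjectivity of $\Gal(\bar\eta/\eta)\to\pi_1^{\et}(S,\bar\eta)$ (valid because $S$ is normal) to transport the evident Galois-stability of $\NS(\sX_{\bar\eta})$ to $\pi_1^{\et}$-stability; without this your inner-automorphism computation does not close. (Also, in (a) the total space $\sX$ is normal, not regular, so ``take the closure of a Weil divisor'' needs the extra care that the EGA reference supplies.)

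The serious problem is in (d). The step ``embed $\NS(\sX_{s})\otimes\IQ_{\ell}\hookrightarrow\H^{2}_{\et}(\sX_{s},\IQ_{\ell}(1))$ \dots\ whose image is $\pi_{1}^{\et}$-stable'' is false at a special fibre $s$: monodromy does not preserve the N\'eron--Severi group of a special fibre. If it did, every class on $\sX_s$ would be fixed up to finite index and would spread out to a neighbourhood, and there would be no Noether--Lefschetz jumping --- the phenomenon this entire paper is about. What \emph{is} $\pi_1^{\et}$-stable is either the span of the cycle classes of the points of your orbit $\bar Q_s$ (the classes coming from the dominating component, which form a sub-local system), or $\NS(\sX_{\bar\eta})\otimes\IQ_\ell$ at the geometric generic point. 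Your connected-image-versus-finite-image argument is sound once it is applied to such a stable subspace, and the norm construction could then be pushed through (modulo the technicalities that $\shP_{\sX/S}/\shP^{0}_{\sX/S}$ carries no universal line bundle and that $s$ need not lie in the open locus $U$ over which $\bar Q$ is \'etale, since distinct generic classes may coalesce under specialization). The paper's proof avoids all of this by working at $\bar\eta$: connectedness of the monodromy group forces the finite action of $\Gal(\bar\eta/\eta)$ on $\NS(\sX_{\bar\eta})_\IQ$ to be trivial, so writing $\xi=\mathrm{sp}_\gamma(\beta)$ the class of $\beta$ descends to $\NS(\sX_\eta)_\IQ$, and one concludes directly from (a) --- no norm, and no analysis of components of the Picard scheme, is needed.
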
 
We remark that by (a), for any field-valued point $t$ on $S$, there are well defined specialization maps $\Pic(\sX_\eta) \to \Pic(\sX_t)$ and $\NS(\sX_\eta) \to \NS(\sX_t)$.
\begin{proof}
    (a) follows from \cite[$\mathrm{Err}_{\mathrm{IV}}$ Cor. 21.4.13]{EGAIV4}. For (b) and (c), we only explain the $\mathrm{char\,} k = p > 0$ case as the other case is similar but easier. Note that the following diagram commutes. 
        \[\begin{tikzcd}
	{\NS(\sX_{\bar{\eta}})} & {\NS(\sX_{\bar{s}})} \\
	{\H^2_\et(\sX_{\bar{\eta}}, \hat{\IZ}^p(1))} & {\H^2_\et(\sX_{\bar{s}}, \hat{\IZ}^p(1))}
	\arrow["{\mathrm{sp}_\gamma}", from=1-1, to=1-2]
	\arrow["{c_1}"', from=1-1, to=2-1]
	\arrow["{c_1}", from=1-2, to=2-2]
	\arrow["\gamma"', from=2-1, to=2-2]
    \end{tikzcd}\]
   Now if $\gamma'$ is a different choice, then the composition of \'etale paths $\gamma' \circ \gamma^{-1}$ defines an element of $\pi_1^\et(S, \bar{\eta})$, whose action on $\H^2_\et(\sX_{\bar{\eta}}, \hat{\IZ}^p(1))$ preserves the image of $\NS(\sX_{\bar{\eta}})$. Indeed, this follows from the fact that the $\Gal(\bar{\eta}/\eta)$-action preserves $\NS(\sX_{\bar{\eta}})$ and factors through the natural map $\Gal(\bar{\eta}/\eta) \to \pi_1^\et(S, \bar{\eta})$, which is a surjection by the normality of $S$. The conclusion easily follows. Note that the hypothesis in (c) implies that $c_1 : \NS(\sX_{\bar{\eta}}) \to \H^2_\et(\sX_{\bar{\eta}}, \hat{\IZ}^p(1))$ is injective.

   (d) As the action of $\Gal(\bar{\eta}/\eta)$ on $\NS(\sX_{\bar{\eta}})_\IQ$ factors through $\pi_1^\et(S, \bar{\eta})$ and a finite quotient, the connectedness of $\mathrm{Mon}(R^2 f_* \IQ_\ell, s)$ implies that this action is in fact trivial, so that the natural map $\NS(\sX_\eta)_\IQ \to \NS(\sX_{\bar{\eta}})_\IQ$ is an isomorphism. Suppose that $\xi = \mathrm{sp}_\gamma(\beta)$ for some path $\gamma$ and $\beta \in \Pic(\sX_{\bar{\eta}})$. Then the class of $\beta$ in $\NS(\sX_{\bar{\eta}})_\IQ$ descends to a class $\bxi$ in $\NS(\sX_{\eta})_\IQ$. Choose a representative of $\bxi$ in $\Pic(\sX_\eta)$ and apply (a). 
\end{proof}

\begin{remark}
    In general, for (d) we cannot take $\bxi$ such that $\bxi_s = \xi$ as elements of $\Pic(\sX_s)$. Even when $S = \Spec(k)$ is a point, this cannot be guaranteed if $k$ is not algebraically closed: the morphism $\Pic(\sX) \into \Pic(\sX_{\bar{k}})^{\Gal_k}$ is in general not surjective ---the cokernel is a subgroup of the Brauer group $\mathrm{Br}(k)$ and does not always vanish. 
\end{remark}

\subsection{Local Deformation Spaces}

\subsubsection{}\label{sec: Def(xi) in char 0} Let $\kappa$ be a field of characteristic $0$, $R$ be a complete local $\kappa$-algebra with residue field $\kappa$. Let $S := \Spf(R)$ and $s$ be the special point. Suppose that $f : \sX \to S$ is a smooth projective family with geometrically connected fibers. Let $\xi \in \Pic(\sX_s)$ be any element. Then there exists a closed formal subscheme $\Def(\xi) \subseteq S$ characterized by the property that for every Artin $\kappa$-algebra $A$, an element $\wt{s}$ in $S(A)$ lies in $\Def(\xi)(A)$ if and only if $\xi$ deforms, possibly non-uniquely, to $\sX_{\wt{s}}$. It is not hard to construct $\Def(\xi)$ explicitly using the standard deformation theory of line bundles: Let $(\sH, \nabla, \Fil^\bullet \sH)$ be the filtered flat vector bundle $R^2 f_* \Omega^\bullet_{\sX/S}$. Then $c_{1, \dR}(\xi)$ extends uniquely to a horizontal section $\sigma$ of $\sH$, as $\sH$ admits a flat trivialization $(\sH, \nabla) \simeq \sH_s \tensor (R, d)$ (see for example the proof of \cite[Prop.~2.16]{BOBook}). Then $\Def(\xi) = \Spf(R/I)$ where $I$ is the minimal ideal such that the image of $\sigma$ vanishes in $(\sH / \Fil^1 \sH) \tensor_R (R/I)$. In particular, it is clear that $I$ is generated by $h^{0, 2} := \mathrm{rank\,} \sH / \Fil^1 \sH$ elements. \\

Our goal in this section is to prove the analogue of \ref{sec: Def(xi) in char 0} in positive and mixed characteristic. The difficulty to overcome in this case is that in general $R^2 f_* \Omega^\bullet_{\sX/S}$ does not admit a flat trivialization. 

\begin{definition}
\label{def: admissible}
    Let $S$ be a noetherian (possibly formal) scheme and $f: \sX \to S$ be a smooth projective morphism with geometrically connected fibers. We say that $\sX/S$ is an \textbf{admissible} family if the following conditions hold: 
    \begin{enumerate}[label=\upshape{(\roman*)}]
        \item the Hodge-de Rham spectral sequence degenerates at $E_1$ page; 
        \item the Hodge cohomology $\H^j(\Omega^i_{\sX/S}) := R^j f_* \Omega_{\sX/S}^i$ is locally free for every $(i, j)$. 
        \item for every prime $p$, the crystal of coherent sheaves $R^r f_{\IF_p, \cris *} \sO_{\sX_{\IF_p}/\IZ_p}$ on $\Cris(S_{\IF_p}/\IZ_p)$ is locally free for every $r$. 
    \end{enumerate}
    When $S$ is connected, we shall denote $\mathrm{rank\,} \H^j(\Omega^i_{\sX/S})$ by $h^{i, j}$. 
\end{definition}
\begin{remark}
    The above assumptions are used to make sure that various cohomology modules commute with base change: (i) and (ii) easily imply that for each $r$, $\H^r_\dR(\sX/S) := R^r f_* \Omega^\bullet_{\sX/S}$ is locally free. Moreover, using \cite[III~Cor.~12.7]{Hartshorne}, one quickly checks by induction that for any morphism $T \to S$ between noetherian (formal) schemes, the natural base change morphisms $\H^j(\Omega^i_{\sX/S}) \tensor_{\sO_S} \sO_T \to \H^j(\Omega^i_{\sX_T/T})$ and hence $\H^r_\dR(\sX/S) \tensor_{\sO_S} \sO_T \to \H^r_\dR(\sX_T/T)$ are isomorphisms for all $i, j, r$. Similarly, suppose that $T$ is a noetherian $\IF_p$-scheme over $S_{\IF_p}$ and there is PD morphism $T' \to S'$ between PD thickenings $T'$ of $T$ and $S'$ of $S_{\IF_p}$. Then the natural morphism $$(R^r f_{\IF_p, \cris *} \sO_{\sX_{\IF_p}/\IZ_p})_{S'} \tensor_{\sO_{S'}} \sO_{T'} \to (R^r f_{T, \cris *} \sO_{\sX_{T}/\IZ_p})_{T'}$$
    is an isomorphism. One may deduce this from the commutativity with base change on the level of derived categories \cite[Thm~7.8]{BOBook} and the universal coefficient theorem. 
\end{remark}   

The following lemma follows from a famous theorem of Deligne-Illusie \cite{DI}, standard base change theorem for coherent cohomology, and crystalline de-Rham comparison isomorphisms. 
\begin{lemma}
\label{lem: admissible for large primes}
    Let $L$ be a number field, $N \in \IN_{> 0}$, $\sS$ be a connected smooth scheme over $\sO_L[1/N]$ of finite type, and $f : \sX \to \sS$ be a smooth projective morphism with geometrically connected fibers. Then up to increasing $N$, $\sX$ is admissible. 
\end{lemma}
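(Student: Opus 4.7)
The plan is to identify the finitely many primes of $\sO_L$ that need to be inverted so that each of conditions (i), (ii), (iii) of \ref{def: admissible} is satisfied. Write $R = \sO_L[1/N]$ and let $n$ denote the relative dimension of $\sX/\sS$. I will verify the three conditions on the geometric fibres of $\sS \to \Spec R$ first, and then promote the fibrewise statements to global ones.

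For the fibrewise step, start over the characteristic-zero fibre $\sS_{\bar L}$: on each connected component of $\sS_{\bar L}$, classical Hodge theory gives constancy of Hodge numbers and $E_1$-degeneration of the Hodge-de Rham spectral sequence, which combined with Grauert's theorem on the reduced base yields (i) and (ii) over $\sS_{\bar L}$. In positive characteristic, invert once and for all the (finitely many) primes of $R$ that ramify in $\sO_L$ together with those $p \leq n$. For any remaining closed point $s \in \sS$ of residue characteristic $p$, Cohen structure theorem yields $\widehat{\sO_{\sS,s}} \simeq W(k(s))\llbracket t_1, \ldots, t_d \rrbracket$, giving a canonical lift of $\sX_s$ to $W_2(k(s))$; Deligne-Illusie \cite{DI} then produces $E_1$-degeneration on the fibre $\sX_s$, and by base change the Hodge-cohomology fibre dimensions at $s$ match those in characteristic zero.

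To pass from fibrewise to global, the non-flat locus of each coherent sheaf $R^j f_* \Omega^i_{\sX/\sS}$ on $\sS$ is closed, and so is the locus where the Hodge-de Rham spectral sequence fails to degenerate at $E_1$ (detected, for instance, via upper semi-continuity of the Hodge fibre-ranks against the constant generic rank of $\H^n_\dR$). Both loci are disjoint from $\sS_{\bar L}$ by the previous step, so their images in $\Spec R$ are constructible subsets (by Chevalley) missing the generic point, hence supported at finitely many closed points. Inverting these primes establishes (i) and (ii) on all of $\sS$. For (iii), fix any prime $p$ that remains invertible: the $p$-adic formal completion $\widehat{\sS}$ is a smooth formal lift of $\sS_{\IF_p}$ and $\widehat{\sX}/\widehat{\sS}$ lifts $\sX_{\IF_p}/\sS_{\IF_p}$, so the crystalline-de Rham comparison of \cite{BO} identifies the value of the crystal $R^r f_{\cris *} \sO_{\sX_{\IF_p}/\IZ_p}$ on $\widehat{\sS}$ with $\H^r_\dR(\widehat{\sX}/\widehat{\sS})$, which is locally free by (i) and (ii); the crystal structure then propagates local freeness to every object of $\Cris(\sS_{\IF_p}/\IZ_p)$.

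The main obstacle is ensuring that the Deligne-Illusie hypothesis (a lift to $W_2(k(s))$) is available at every closed point $s$ of $\sS$; this is why one must invert primes ramified in $\sO_L$ at the outset, so that $W(k(s)) \hookrightarrow \widehat{\sO_{\sS,s}}$ via Cohen structure and the universal lift of $\sX_s$ over $\widehat{\sO_{\sS,s}}$ restricts to one over $W_2(k(s))$. Once this is arranged, each of the three admissibility conditions is a closed condition on $\sS$ whose failure locus is disjoint from $\sS_{\bar L}$, and Chevalley's theorem reduces the remaining work to inverting a finite set of additional primes.
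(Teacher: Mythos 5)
Your proposal is correct and follows the same route the paper indicates for this lemma (the paper only gives a one-sentence sketch citing Deligne--Illusie, coherent base change, and the crystalline--de~Rham comparison, which is exactly what you flesh out). The only remark worth making is that your Deligne--Illusie step at closed points of positive characteristic is logically redundant: once the failure loci for (i) and (ii) are closed in $\sS$ and disjoint from $\sS_L$, Chevalley's theorem already confines their images in $\Spec(\sO_L[1/N])$ to finitely many closed points, so the characteristic-zero input plus constructibility suffices.
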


For the content below, let $k$ be an algebraically closed field of characteristic $p > 0$ and let $W = W(k)$. 

\begin{definition}
    Let $S$ be the $p$-adic completion of a noetherian $W$-scheme. A filtered F-crystal on $S$ is a crystal $\bH$ of vector bundles on $\Cris(S_k/W)$ equipped with a decreasing filtration $\Fil^\bullet$ on $\bH_S$ by direct summands and a Frobenius morphism $F : F^*_{S_k/W} \bH \to \bH$, where $F_{S_k/W}$ is the morphism on the topos $(S/W)_\cris$ induced by the absolute Frobenius morphism $F_{S_k}$ on $S_k$. 
\end{definition}

Here $\bH_S$ denotes the $\sO_S$-module obtained by evaluating $\bH$ on $S$, which is viewed as a (pro-)object of $\Cris(S_k/W)$. 

\begin{remark}
    We remark that, if $\bH = R^r f_{\cris *} \sO_{\sX/W}$ for an admissible family $f : \sX \to S_{\IF_p}$, then the Hodge filtration on $\bH_S \simeq R^r f_* \Omega^\bullet_{\sX/S}$ and the Frobenius action satisfy a compatibility condition (\cite[Thm~8.26]{BOBook}). We will only consider situations in which this condition is satisfied, but we will not make use of it, so we do not incorporate it into the definition above.
\end{remark}

\begin{proposition}
\label{prop: Frobenius trick}
    Let $R$ be a northerian complete local $k$-algebra. Set $S = \mathrm{Spf}(R)$ and let $s$ be the closed point. Let $\bH$ be an F-crystal over $\mathrm{Cris}(S/W)$. If $R$ is lci, then for any $m \in \IN_{>0}$ the map 
    \begin{equation}
    \label{eqn: restriction of Frobenius invariant}
        \Gamma((S/W)_\cris, \bH)^{F = p^m} := \{\alpha \in \Gamma((S/W)_\cris, \bH) \mid F(\alpha \tensor 1) = p^m \alpha \}  \to \Gamma((s/W)_\cris, \bH|_{s})
    \end{equation}
    is injective.
\end{proposition}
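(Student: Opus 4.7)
The plan is to prove injectivity by working on an explicit $p$-adically complete PD thickening of $\Spf(R)$ admitting a Frobenius lift, and to iterate the eigenvalue equation $F(\alpha)=p^m\alpha$ against a carefully calibrated filtration on this thickening.

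\textit{Setup.} Since $R$ is complete local Noetherian and lci over $k$, the Cohen structure theorem lets me write $R = k[[x_1,\dots,x_n]]/(\bar f_1,\dots,\bar f_r)$ with $(\bar f_i)$ a regular sequence in the maximal ideal. I lift each $\bar f_i$ to $\wt f_i \in \wt A := W[[x_1,\dots,x_n]]$ with vanishing constant term, form the $p$-adically completed PD envelope $\what D$ of $\wt A$ along $I := (\wt f_1,\dots,\wt f_r, p)$ (compatibly with the standard PDs on $(p)$), and extend the Frobenius $\phi_{\wt A}$ with $\phi_{\wt A}(x_i)=x_i^p$ to a PD-compatible Frobenius lift $\phi:\what D \to \what D$. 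The augmentation $\epsilon:\what D \to W$ sending $x_i \mapsto 0$ realizes the specialization at $s$, and its kernel $\mathfrak M$ is generated as an ideal by the $x_i$ together with $\{\wt f_j^{[k]}\}_{k\geq 1}$.

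\textit{Key filtration estimate.} The core input will be
\[
\phi(\mathfrak M) \subseteq \mathfrak M^p + p\mathfrak M,
\]
which I will verify on the generators above: for $\phi(x_i)=x_i^p \in \mathfrak M^p$ and $\phi(\wt f_j)\in (x_1,\dots,x_n)^p \subseteq \mathfrak M^p$ this is immediate; for $k\geq 1$ I use $\phi(\wt f_j^{[k]}) = \phi(\wt f_j)^{[k]} = (\wt f_j^p + p h_j)^{[k]}$, where $h_j := (\phi(\wt f_j) - \wt f_j^p)/p \in \mathfrak M$ (using that $\wt f_j$ has zero constant term), and expand using the PD binomial formula; each summand will lie in $p\mathfrak M$. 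A binomial expansion of $(\mathfrak M^p+p\mathfrak M)^{p^n}$, combined with the $p$-divisibility of $\binom{p^n}{k}$ for $1\leq k<p^n$, then yields by induction
\[
\phi^n(\mathfrak M) \subseteq \mathfrak M^{p^n} + p\mathfrak M \qquad (n\geq 1).
\]

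\textit{Iteration.} Evaluating $\bH$ on the PD thickening $\what D$ gives a finite free $\what D$-module $\bH_{\what D}$ equipped with a $\phi$-semilinear endomorphism $F$; the section $\alpha$ yields $\alpha_{\what D}\in \bH_{\what D}$ with $F(\alpha_{\what D})=p^m\alpha_{\what D}$, and the vanishing hypothesis becomes $\alpha_{\what D}\in \mathfrak M\bH_{\what D}$ by base change along $\epsilon$. Iterating $F$ produces $F^n(\alpha_{\what D})=p^{nm}\alpha_{\what D}$, and $\phi^n$-semilinearity together with the key estimate gives
\[
p^{nm}\alpha_{\what D} \in \mathfrak M^{p^n}\bH_{\what D} + p\mathfrak M\bH_{\what D} \qquad (n\geq 1).
\]
To conclude, I reduce modulo each power of $p$ and apply Krull intersection on the $\mathfrak M$-adic filtration on each $p$-adic slice, producing $\alpha_{\what D}\in \bigcap_k p^k \bH_{\what D}$; this vanishes by $p$-adic separatedness of the finite free $\what D$-module $\bH_{\what D}$, and hence $\alpha=0$ as a global section of the crystal.

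\textit{Main obstacle.} The hard part will be the final step of converting the mixed $\mathfrak M$-adic and $p$-adic bound into honest vanishing: the two scales on $\what D$ must be cleanly disentangled. It is precisely here that the lci hypothesis enters essentially, since it guarantees that $\what D$ and its $p$-adic quotients are regular enough — in particular, that their $\mathfrak M$-adic graded pieces are $p$-torsion-free on each $p$-slice — for the intersection argument to close.
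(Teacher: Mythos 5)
Your overall strategy (evaluate on an explicit flat PD thickening with a Frobenius lift and exploit the eigenvalue equation $F(\alpha)=p^m\alpha$) is in the spirit of the paper's Dwork-trick argument, and your filtration estimate $\phi(\mathfrak M)\subseteq \mathfrak M^p + p\mathfrak M$ does hold on the generators you list. But the conclusion you draw from it is vacuous, and this is a genuine gap. Iterating gives $p^{nm}\alpha_{\what D}\in (\mathfrak M^{p^n}+p\mathfrak M)\bH_{\what D}$; since you already know $\alpha_{\what D}\in\mathfrak M\bH_{\what D}$ and $nm\geq 1$, the element $p^{nm}\alpha_{\what D}$ lies in $p\mathfrak M\bH_{\what D}$ for trivial reasons, so the containment carries no information whatsoever about $\alpha_{\what D}$. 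Concretely, reducing modulo $p$ (or modulo any $p^k$ with $k\leq nm$) annihilates the left-hand side, so the intended ``Krull intersection on $p$-adic slices'' cannot even begin: you never obtain the first step $\alpha_{\what D}\in p\bH_{\what D}$. Nor can the estimate be strengthened to something of the form $\phi^n(\mathfrak M)\subseteq \mathfrak M^{p^n}+p^{a_n}\what D$ with $a_n>nm$: already $\phi^n(\wt f_j)$ contains the cross term $p\,\phi^{n-1}(h_j)\in p\,\mathfrak M^{p^{n-1}}$, which is neither $\mathfrak M$-adically deep relative to $\mathfrak M^{p^n}$ nor $p$-adically deep. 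The two filtrations cannot be disentangled by this kind of bound, so the ``main obstacle'' you flag is not a technicality but the point where the argument fails.

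The paper sidesteps this by first reducing to the Artinian case: writing $R=\varprojlim R/I_\alpha$ with each $I_\alpha$ generated by powers of a system of parameters, each quotient is Artinian and lci, and it suffices to prove injectivity for each quotient. Once $\fm$ is nilpotent, the $n$-th power of the absolute Frobenius of $S$ \emph{literally} factors through the closed point $s$ for $n\gg 0$, so $j^*\gamma=0$ forces $F^n(\gamma\tensor 1)=p^{nm}\gamma=0$ exactly, not merely modulo a deep ideal; the lci hypothesis then enters only to guarantee that the PD envelope (hence $\Gamma((S/W)_\cris,\bH)$) is $p$-torsion-free, whence $\gamma=0$. To repair your proof you should incorporate this reduction and replace your approximate Frobenius factorization by the exact one available on Artinian quotients; the rest of your setup (flat PD thickening, $p$-torsion-freeness from the regular sequence) then carries through.
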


The proof below is essentially a version of ``Dwork trick'', which was used in \cite{BO}. 
\begin{proof}
    Let $\fm$ be the maximal ideal and $d$ be the Krull dimension of $R$ and choose a system of parameters $\{x_1, x_2, \cdots, x_d\}$. Let $\alpha = (\alpha_1, \cdots, \alpha_d) \in \IN_{>0}^{d}$ be a multi-index equipped with a partial order such that $\alpha \le \alpha'$ if and only if $\alpha_i \le \alpha'_i$ for every $i$. Let $I_\alpha := (x_1^{\alpha_1}, \cdots, x_d^{\alpha_d})$ so that $R = \varprojlim R/ I_\alpha$. Note that each $R/ I_\alpha$ is Artinian and lci. It suffices to show that the statement is true for each $R / I_\alpha$, so that we reduce to the case when $R$ itself is Artinian, i.e., $\fm$ is nilpotent.
    
    Choose a smooth $W$-scheme $\wt{S}$ such that $S$ admits a closed embedding into $\wt{S}$ and let $D$ be the PD envelope of $S$ in $\wt{S}$. Since $S$ is lci, $D$ (and hence $\bH_D$) has no $p$-torsion (cf. \cite[Rmk~1.10(i)]{BO}). This implies that $\Gamma((S/W)_\cris, \bH)$ has no $p$-torsion. On the other hand, we note that for $n \gg 0$, $F_S^n$ factors as $S \to s \stackrel{j}{\into} S$ for some morphism $S \to s$, where $j$ is the embedding and $F_S$ is the absolute Frobenius morphism. Suppose now that $\gamma \in \Gamma((S/W)_\cris, \bH)^{F = p^m}$ is an element which lies in the kernel of (\ref{eqn: restriction of Frobenius invariant}), i.e., $j^*(\gamma) = 0$. Since $F_S^n$ factors through $j$, this implies that $F^n(\gamma \tensor 1) = p^{mn} \gamma = 0$, so that $\gamma = 0$. 
\end{proof}

\begin{example}
    The assumption on $R$ being lci is necessary: Let $R = k[\![x, y]\!]/ (x^2, xy, y^2)$. Set $S = \Spf(R)$ and let $s$ be the closed point. Let $f : \sE \to S$ be a constant elliptic curve whose fiber over the closed point is ordinary. Set $\mathcal{A} = \sE \times_S \sE$. Since $\End (R^1 f_{\cris*} \sO_{\sE/W})(-1)$ is a direct summand of $\bH := R^2 (f \times_S f)_{\cris *} \sO_{\sA/W}$, the map $\Gamma((S/W)_\cris, \bH)^{F = p} \to \Gamma((s/W)_\cris, \bH|_{s})$ is not injective by \cite[(A.4)]{BO}. The problem is that the PD envelope of $S$ in a smooth $W$-scheme indeed has $p$-torsion. 
\end{example}

\begin{theorem}
\label{thm: Deligne generalized}
Let $R$ be a complete local $W$-algebra with residue field $k$ such that $R_k = R \tensor k$ is reduced and lci. Let $S = \Spec(R)$, $\hat{S} := \Spf(R)$ and $s$ be the special point. Suppose that $f : \sX \to S$ is an admissibe family. Let $\xi \in \Pic(\sX_s) = \shP_{\sX/S}(k)$ be any element. 

Let $\hat{\shP}^\xi$ be the formal completion of $\shP_{\sX/S}$ at the closed point $(s, \xi)$ and let $\Def(\xi) \subseteq \hat{S}$ be its scheme-theoretic image. Then $\hat{\shP}^\xi$ is formally smooth over $\Def(\xi)$ and the ideal $J \subseteq R$ defining $\Def(\xi)$ is generated by at most $h := \dim_k \H^2(\sO_{\sX_s})$ elements; moreover $\Def(\xi)$ only depends on the class of $\xi$ in $\NS(\sX_s)$. 
\end{theorem}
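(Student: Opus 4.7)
The plan is to combine crystalline deformation theory with the Frobenius injectivity of Proposition \ref{prop: Frobenius trick} in order to realise $\Def(\xi)$ as the scheme-theoretic vanishing locus of a canonical ``universal first Chern class'' inside the rank-$h$ quotient $\bH_R / \Fil^1 \bH_R$, where $\bH_R$ denotes the value of the crystal $\bH := R^2 f_{R_k,\cris *} \sO_{\sX_{R_k}/W}$ on $R$ regarded as a PD-thickening of $R_k$ (via the canonical PD structure on $pR$, or via the PD envelope along $pR$ if $R$ has $p$-torsion). Admissibility of $\sX/S$ ensures via cohomology-and-base-change that $\bH_R$ is a finite projective $R$-module with $\Fil^\bullet$ given by direct summands, that $\bH_R/\Fil^1 \bH_R$ is locally free of rank exactly $h$, and that $\bH_R$ inherits both a flat connection and a Frobenius.

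The first and main step is to construct a canonical $F$-invariant lift $\tilde\sigma \in \bH_R$ of the first crystalline Chern class $c_{1,\cris}(\xi) \in \H^2_\cris(\sX_s/W)$. Uniqueness of such an extension is precisely Proposition \ref{prop: Frobenius trick} applied to the lci $k$-algebra $R_k$ with $m = 1$. Existence I would establish by an inductive Dwork-type argument over Artinian quotients of $R_k$: starting from $c_{1,\cris}(\xi)$ on the closed fibre, one lifts stepwise and solves the eigenvalue equation $F\tilde\sigma = p\tilde\sigma$ at each stage using the contractivity of $F/p$ in the topologically nilpotent direction.

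With $\tilde\sigma$ in hand, I would identify $\Def(\xi)$ with the vanishing locus of $\bar\sigma := \tilde\sigma \bmod \Fil^1 \bH_R$. For any Artinian quotient $A$ of $R$ over which $\xi$ admits a lift $\tilde\xi \in \Pic(\sX_A)$, the crystalline-de Rham Chern class $c_{1,\cris}(\tilde\xi)$ is a Frobenius-invariant element of $\Fil^1 \bH_A$ restricting to $c_{1,\cris}(\xi)$, hence must equal $\tilde\sigma|_A$ by uniqueness, forcing $\bar\sigma|_A = 0$. Conversely, when $\bar\sigma|_A = 0$ the standard deformation-theoretic obstruction (via the Atiyah extension / exponential sequence) vanishes, producing a lift of $\xi$ to $\sX_A$. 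Choosing a local basis of $\bH_R/\Fil^1 \bH_R$ expresses $\bar\sigma$ as $h$ elements of $R$ that generate the defining ideal $J$ of $\Def(\xi)$, giving the desired bound.

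Formal smoothness of $\hat\shP^\xi \to \Def(\xi)$ then reduces to the standard $\H^1(\sO)$-torsor argument: given a square-zero $\Def(\xi)$-thickening $A' \twoheadrightarrow A$ and a chosen lift $\tilde\xi_A \in \Pic(\sX_A)$, some lift $\tilde\xi' \in \Pic(\sX_{A'})$ exists by the characterisation above, and its discrepancy with $\tilde\xi_A$ on $\sX_A$ is an element of $\H^1(\sX_A, \sO)$ that comes from $\H^1(\sX_{A'}, \sO)$ by admissibility-and-base-change, used to adjust $\tilde\xi'$. The dependence of $\Def(\xi)$ only on $[\xi] \in \NS(\sX_s)$ is then immediate: elements of $\shP^0_{\sX_s/k}(k)$ have vanishing $c_{1,\cris}$, hence trivial $\tilde\sigma$, and so lift unobstructedly. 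The main technical hurdle I anticipate is the existence half of Step 1, particularly when $R$ has $p$-torsion and the PD envelope intervenes; uniqueness is handed to us by Proposition \ref{prop: Frobenius trick}, but existence demands the Dwork-style iterative Frobenius solution sketched above.
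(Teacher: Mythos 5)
There is a genuine gap, and it sits at the foundation of your argument: the \emph{existence} of a global Frobenius-invariant horizontal lift $\tilde\sigma \in \bH_R$ of $c_{1,\cris}(\xi)$ over all of $R$. This is false in general. Proposition \ref{prop: Frobenius trick} gives only injectivity of $\Gamma((S_k/W)_\cris,\bH)^{F=p} \to \Gamma((s/W)_\cris,\bH|_s)$, never surjectivity, and the Dwork-style contraction you invoke is exactly the uniqueness mechanism: $F/p$ is not defined on the integral lattice, and iterating it does not manufacture a solution of $F\tilde\sigma = p\tilde\sigma$ extending a prescribed class unless one already exists. Concretely, take $R_k$ to be the completed local ring of a moduli point of K3 surfaces and $\xi$ a line bundle on $\sX_s$ whose class does not spread out to the generic fibre even up to $p$-power isogeny; by Morrow's theorem (cited in the paper as \cite[Thm~1.4]{Morrow}), $c_{1,\cris}(\xi)$ then admits no extension to a global section of the isocrystal, so no $\tilde\sigma$ exists and your ``universal first Chern class'' $\bar\sigma \in \bH_R/\Fil^1\bH_R$ cannot be written down. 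This is precisely the obstacle the paper flags at the start of the section (``in general $R^2 f_* \Omega^\bullet_{\sX/S}$ does not admit a flat trivialization''): your plan is the characteristic-zero argument of \ref{sec: Def(xi) in char 0} transplanted verbatim, and it is exactly the step that breaks in mixed characteristic.

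The paper's proof is organized to avoid ever needing such a global section. It introduces the functor $\Phi^\xi$ on $\mathsf{Art}_W$ whose $A$-points are pairs $(z, c_{1,\cris}(\zeta))$ for an \emph{actual} deformation $\zeta$ of (something algebraically equivalent to) $\xi$, so Chern-class data is only carried where the line bundle genuinely deforms; Proposition \ref{prop: Frobenius trick} enters only to prove that $\Phi^\xi(A)\to\hat S(A)$ is injective (via the smoothness of $\shP^0_{\sX/S}$ and vanishing of $c_{1,\cris}$ on it), Ogus's criterion handles one square-zero extension at a time, and Schlessinger's criterion pro-represents $\Phi^\xi$ by $\Def(\xi)$. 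Crucially, the bound of $h$ generators for $J$ is \emph{not} obtained by exhibiting $h$ equations from a rank-$h$ quotient bundle; it comes from Deligne's obstruction argument: the obstruction to deforming a universal $\wt{\xi}$ from $\Def(\xi)$ to $\Spf(R/\fm J)$ lies in $\H^2(\sO_{\sX_s})\tensor J/\fm J$, and Nakayama converts its $h$ components into generators of $J$. Your remaining steps (the $\H^1(\sO)$-torsor argument for formal smoothness, the $\NS$-dependence via vanishing of $c_{1,\cris}$ on $\shP^0$) are in the right spirit and close to Lemmas \ref{lem: injectivity} and \ref{lem: formal smoothness}, but they cannot be assembled without replacing your Step 1 by a construction of this functorial type.
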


Note that the formal smoothness of $\hat{\shP}^\xi$ over $\Def(\xi)$ implies that as a closed formal subscheme of $\hat{S}$, $\Def(\xi)$ is characterized by the property that for any $A \in \mathsf{Art}_W$ (the category of Artin $W$-algebras) and $\wt{s} \in \hat{S}(A)$, $\wt{s}$ factors through $\Def(\xi)$ if and only if $\xi$ (possibly not uniquely) deforms to $\sX_{\wt{s}}$. This justifies the notation (cf. \ref{sec: Def(xi) in char 0}).

\begin{proof}
    For every $A \in \mathsf{Art}_W$, note that a point in $\shP_{\sX/S}(A)$ is of the form $(z, \zeta)$, where $z \in \hat{S}(A)$ and $\zeta \in \Pic(\sX_z)$ because $\sX \to S$ admits a section. Let $\Phi^\xi : \mathsf{Art}_W \to \mathsf{Set}$ be the functor defined by sending $A \in \mathsf{Art}_W$ to the set of pairs $(z, \alpha)$, where $z \in \hat{S}(A)$ and $\alpha = c_{1, \cris}(\zeta) \in \Gamma((A/W)_\cris, \bH|_z)$ for some $\zeta \in \Pic(\sX_z)$ such that $\zeta_s \sim \xi$. Equivalently, the pair $(z, \alpha)$ is an equivalence class of pairs $(z, \zeta)$ such that $\zeta \in \Pic(\sX_z)$ is an element with $\zeta_s \sim \xi$, and $(z, \zeta)$ is equivalent to $(z, \zeta')$ if and only if $c_{1, \cris}(\zeta) = c_{1, \cris}(\zeta')$. 
    
    We start by proving two lemmas: 

    \begin{lemma}
\label{lem: injectivity}
    For every $A \in \mathsf{Art}_W$, the projection $\Phi^\xi(A) \to \hat{S}(A)$ is injective. 
\end{lemma}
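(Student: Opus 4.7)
The plan is to reduce to showing that a single crystalline first Chern class vanishes, and then invoke \ref{prop: Frobenius trick}. Given two elements $(z, \alpha_1), (z, \alpha_2) \in \Phi^\xi(A)$ with the same image $z \in \hat{S}(A)$, I pick representatives $\alpha_i = c_{1,\cris}(\zeta_i)$ with $\zeta_i \in \Pic(\sX_z)$ and $(\zeta_i)_s \sim \xi$, and form $\eta := \zeta_1 \otimes \zeta_2^{-1} \in \Pic(\sX_z)$. The task then reduces to showing $\beta := c_{1,\cris}(\eta) = \alpha_1 - \alpha_2$ is zero.

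I next verify two properties of $\beta$. First, since $(\zeta_1)_s \sim \xi \sim (\zeta_2)_s$, the special fiber $\eta_s$ is algebraically equivalent to $\mathcal{O}_{\sX_s}$, so it defines a $k$-point of $\shP^0_{\sX_s/s}$. Since the crystalline first Chern class on a smooth projective variety over $k$ factors through the N\'eron--Severi group, this yields $\beta|_s = c_{1,\cris}(\eta_s) = 0$. Second, $\beta$ is Frobenius-equivariant of weight one, $F(\beta) = p\beta$, by the standard compatibility of the crystalline first Chern class with Frobenius.

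To conclude that $\beta = 0$, I would adapt the argument of \ref{prop: Frobenius trick}. That proposition is stated for $\Spf(R)$ with $R$ a complete local lci $k$-algebra, but its proof relies only on two ingredients: (a) the absolute Frobenius on the base factors through the closed point for some iterate, which in our setting holds for $\Spec(A)$ because $\mathfrak{m}_A$ is nilpotent; and (b) the relevant evaluations of $\bH|_z$ are $p$-torsion-free. Combining $F^n(\beta) = p^n \beta$ with the factoring of $F^n$ through $s$ (where $\beta$ vanishes) then gives $p^n \beta = 0$, whence $\beta = 0$. The main obstacle I anticipate is (b) in the $W$-algebra rather than $k$-algebra setting: $A$ itself need not be lci, so one cannot directly invoke the cited proposition. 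I would address this by leveraging the hypothesis that $R_k$ is reduced and lci to produce a smooth $W$-cover of $\hat{S}$ whose PD envelope has no $p$-torsion, and then transfer the $p$-torsion-freeness to the evaluations of $\bH|_z$ by pullback along $z$.
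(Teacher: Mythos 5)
Your reduction to showing $\beta = c_{1,\cris}(\zeta_1\otimes\zeta_2^{-1})=0$, and the two observations that $\beta|_s=0$ and $F(\beta)=p\beta$, match the paper exactly. The gap is in the final step: you propose to run the Dwork/Frobenius argument of \ref{prop: Frobenius trick} directly over the Artinian base $\Spec(A)$. This cannot work in the stated generality, because the required ingredient (b) --- $p$-torsion-freeness of the PD envelope of $\Spec(A_k)$ in a smooth $W$-scheme, equivalently of the evaluation of $\bH|_z$ --- genuinely fails when $A_k$ is not lci. The paper's Example immediately after \ref{prop: Frobenius trick} (the constant family $\sE\times\sE$ over $k[\![x,y]\!]/(x^2,xy,y^2)$) is precisely a counterexample to the statement you need: a horizontal section with $F(\gamma)=p\gamma$ vanishing at the closed point need not vanish. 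Your proposed repair --- ``transfer the $p$-torsion-freeness to the evaluations of $\bH|_z$ by pullback along $z$'' --- does not go through: torsion-freeness of the PD envelope of $\hat{S}_k$ says nothing about that of $A_k$, since base change along $R\to A$ can create $p$-torsion.

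The paper resolves this by moving in the opposite direction: rather than restricting the problem to $\Spec(A)$, it first \emph{extends} the line bundle $\delta=\zeta_1\otimes\zeta_2^\vee$ from $\sX_z$ to a global line bundle $\wt{\delta}\in\Pic(\sX)$ over all of $S=\Spec(R)$. This uses the smoothness of $\shP^0_{\sX/S}$ over $S$ (via \ref{prop: Kleiman}, which needs $S$ reduced and the torsion-freeness of $\H^2_\cris$ of the geometric fibres to know each $\shP^0_{\sX_s/s}$ is smooth of constant dimension), together with the fact that $\delta_z$ reduces into $\shP^0_{\sX_s/k}$. By functoriality, $c_{1,\cris}(\delta)$ is then the restriction of $c_{1,\cris}(\wt{\delta})\in\Gamma((S/W)_\cris,\bH)^{F=p}$, and \ref{prop: Frobenius trick} applies on $S$ itself, where the hypothesis that $R_k$ is lci is available. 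If you want to complete your argument, this extension step is the missing idea.
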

\begin{proof}
    It suffices to show that for each $z \in \hat{S}(A)$, given $\zeta, \zeta' \in \Pic(\sX_z)$ such that $\zeta_s \sim \zeta_s'$, we have $c_{1, \cris}(\zeta) = c_{1, \cris}(\zeta')$ in $\Gamma((A/W)_\cris, \bH|_{z})$. Let $\delta := \zeta' \tensor \zeta^\vee$. Then $\delta_s \in \shP^0_{\sX_s/k}(k)$ and it suffices to show that $c_{1, \cris}(\delta) = 0$. 
    
    Recall that for every alg-geometric point $t \to S$, $\H^2_\cris(\sX_t/ W(k(t)))$ is torsion-free. This implies that the fiber $\shP^0_{\sX/S}|_t = \shP^0_{\sX_t/k(t)}$ is reduced (or equivalently smooth) by \cite[II~Prop.~5.16]{Illusie}; moreover, the dimension of $\shP^0_{\sX/S}|_t$ is $\dim \H^1(\sO_{\sX_t})$, which by assumption is independent of $t$. 
    
    Since $S$ is reduced, by \ref{prop: Kleiman} $\shP^0_{\sX/S}$ is smooth, so that there exists a deformation $\wt{\delta} \in \Pic(\sX) = \shP_{\sX/S}(S)$ such that $\wt{\delta}_z = \delta$. Then by the functoriality of the Chern class map (see e.g., \cite[Thm~2.4(i)]{Berthelot-Illusie}) $c_{1, \cris}(\delta)$ is the image of $c_{1, \cris}(\wt{\delta})$ under the natural restriction map $\Gamma((S/W)_\cris, \bH) \to \Gamma((A/W)_\cris, \bH|_z)$. Therefore, it suffices to show that $c_{1, \cris}(\wt{\delta}) = 0$. Now we note that $c_{1, \cris}(\wt{\delta})$ lies in the kernel of the map $\Gamma((S/W)_\cris, \bH)^{F = p} \to \Gamma((s/W)_\cris, \bH|_s)$, which is $0$ by \ref{prop: Frobenius trick}. 
\end{proof}

\begin{lemma}
\label{lem: formal smoothness}
    The natural projection $\hat{\shP}^\xi \to \Phi^\xi$ is formally smooth. That is, given any surjection $A \to A'$ whose kernel is square zero, the natural morphism 
    $$ \hat{\shP}^\xi(A) \to \hat{\shP}^\xi(A') \times_{\Phi^\xi(A')} \Phi^\xi(A) $$
    is surjective. 
\end{lemma}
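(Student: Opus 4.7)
The plan is to reduce the lifting question to deforming a line bundle in the identity component $\shP^0$, where the smoothness established inside the proof of \ref{lem: injectivity} directly applies. I will fix a compatible pair $(z', \zeta') \in \hat{\shP}^\xi(A')$ and $(z, \alpha) \in \Phi^\xi(A)$ and pick some $\eta \in \Pic(\sX_z)$ with $\eta_s \sim \xi$ and $c_{1, \cris}(\eta) = \alpha$ to represent $(z, \alpha)$. The compatibility forces $c_{1, \cris}(\eta_{z'}) = c_{1, \cris}(\zeta')$ in $\Gamma((A'/W)_\cris, \bH|_{z'})$, but does not a priori identify $\eta_{z'}$ and $\zeta'$ as line bundles, so $\eta$ must be corrected multiplicatively by an element of $\shP^0$.

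For the correction, consider the discrepancy $\delta' := \zeta' \otimes \eta_{z'}^\vee \in \Pic(\sX_{z'})$. Since $\zeta'_s = \xi$ and $\eta_s \sim \xi$, the class $\delta'_s$ lies in $\shP^0_{\sX_s/s}(k)$, and therefore $\delta' \in \shP^0_{\sX_{z'}/A'}(A')$ because $\shP^0$ is open and closed in $\shP$. Base changing the smooth morphism $\shP^0_{\sX/S} \to S$ along $z$ yields a smooth $A$-scheme $\shP^0_{\sX_z/A}$, and lifting through the square-zero surjection $A \twoheadrightarrow A'$ produces some $\delta \in \shP^0_{\sX_z/A}(A)$ restricting to $\delta'$. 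Setting $\zeta := \eta \otimes \delta$ then defines a line bundle on $\sX_z$ with $\zeta_{z'} = \eta_{z'} \otimes \delta' = \zeta'$ and $\zeta_s = \eta_s \otimes (\xi \otimes \eta_s^\vee) = \xi$, so $\zeta$ is a valid element of $\hat{\shP}^\xi(A)$ lifting $(z', \zeta')$.

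It remains to verify $c_{1, \cris}(\zeta) = \alpha$, which by additivity of the first Chern class reduces to $c_{1, \cris}(\delta) = 0$. Here I will repeat verbatim the Frobenius argument from the proof of \ref{lem: injectivity}: using the smoothness of $\shP^0_{\sX/S}$ over $S$, extend $\delta$ to a global section $\wt\delta \in \shP^0_{\sX/S}(S)$, so that $c_{1, \cris}(\delta)$ is the restriction along $z$ of $c_{1, \cris}(\wt\delta) \in \Gamma((S/W)_\cris, \bH)^{F = p}$. This latter class vanishes on the special point because $\wt\delta_s$ is algebraically trivial, so \ref{prop: Frobenius trick} forces $c_{1, \cris}(\wt\delta) = 0$, and hence $c_{1, \cris}(\delta) = 0$.

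The main technical point to be careful about is the extension of the $A$-level class $\delta$ to a genuinely global section $\wt\delta$ on $S$: this is where the smoothness of $\shP^0_{\sX/S} \to S$ (as opposed to smoothness merely of $\shP^0_{\sX_z/A}$) is used in a non-trivial way, and where one must invoke compatibility of smooth-scheme liftings with closed-point data to ensure that $\wt\delta$ restricts to $\delta$ rather than to some other lift sharing the same special fibre. Everything else in the argument --- the identification of $\delta'$ with an element of $\shP^0$, the construction of $\zeta$, Chern-class additivity, and the Frobenius injectivity of \ref{prop: Frobenius trick} --- is then formal.
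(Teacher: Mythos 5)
Your proof is correct, but it takes a genuinely different route from the paper's. The paper's argument is a two-line application of Ogus's criterion \cite[Prop.~1.12]{OgusCrystals}: a line bundle on $\sX_{z'}$ extends across the square-zero thickening $z' \to z$ if and only if its crystalline Chern class, evaluated on $A$, lands in $\Fil^1$; since $c_{1,\cris}(\zeta') = c_{1,\cris}(\eta_{z'})$ by \ref{lem: injectivity} and $\eta_{z'}$ visibly extends (namely to $\eta$), so does $\zeta'$. You instead build the extension by hand, twisting $\eta$ by a $\shP^0$-valued correction of the discrepancy $\zeta' \otimes \eta_{z'}^\vee$ and using smoothness of $\shP^0_{\sX/S} \to S$ to lift that correction through $A \twoheadrightarrow A'$. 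Both arguments are sound; the paper's is shorter because the $\Fil^1$ criterion absorbs all the work, while yours is more constructive and avoids invoking that criterion here (though the paper needs it elsewhere anyway, e.g.\ in \ref{lem: Def(xi)}). Two remarks. First, the "main technical point" you flag at the end --- extending $\delta$ to a global $\wt{\delta} \in \shP^0_{\sX/S}(S)$ with $\wt{\delta}_z = \delta$ --- is exactly the step already performed inside the proof of \ref{lem: injectivity}, so you are not on thinner ice than the paper; but you can sidestep it entirely: since $\zeta = \eta \otimes \delta$ and $\eta$ are both line bundles on $\sX_z$ with special fibres algebraically equivalent to $\xi$, the statement of \ref{lem: injectivity} (applied as a black box to the point $z \in \hat{S}(A)$) gives $c_{1,\cris}(\zeta) = c_{1,\cris}(\eta) = \alpha$ directly, with no need to re-run the Frobenius argument or re-extend anything globally. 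Second, note that your construction gives $\zeta_s = \zeta'_s = \xi$ on the nose, as required for a point of the formal completion $\hat{\shP}^\xi$ --- this is correctly handled in your computation, but it is the reason the correction must be by $\delta'$ itself rather than by an arbitrary element of $\shP^0$ with the same Chern class.
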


\begin{proof}
    Concretely, the above amounts to saying that given $z \in \hat{S}(A)$ and line bundles $\zeta_1, \zeta_2 \in \Pic(\sX_{z'})$, where $z' \in \hat{S}(A')$ is the image of $z$, then if $\zeta_1$ extends to $\sX_z$, so does $\zeta_2$. By \cite[Prop.~1.12]{OgusCrystals}, $\zeta_i$ extends to $\sX_z$ if and only if $c_{1, \cris}(\zeta_i)_z \in (\bH|_{z'})_A \iso \H^2_\dR(\sX_{z}/A)$ lies in the $\Fil^1$ part of Hodge filtration. However, \ref{lem: injectivity} implies that $c_{1, \cris}(\zeta_1) = c_{1, \cris}(\zeta_2)$, so the conclusion follows. 
\end{proof}

    
Back to the proof of \ref{thm: Deligne generalized}, given morphisms $A' \to A$ and $A'' \to A$ in $\mathsf{Art}_W$, we have a commutative diagram 
    \[\begin{tikzcd}
	{\hat{\shP}^\xi(A' \times_A A'')} & {\hat{\shP}^\xi(A') \times_{\hat{\shP}(A)} \hat{\shP}(A'')} \\
	{\Phi^\xi(A' \times_A A'')} & {\Phi^\xi(A') \times_{\Phi^\xi(A)} \Phi^\xi(A'')} \\
	{\hat{S}(A' \times_A A'')} & {\hat{S}(A') \times_{\hat{S}(A)} \hat{S}(A'')}
	\arrow["\sim", from=1-1, to=1-2]
	\arrow["\pi_1", two heads, from=1-1, to=2-1]
	\arrow[from=2-1, to=2-2]
	\arrow["\pi_2", two heads, from=1-2, to=2-2]
	\arrow["\iota_1", hook, from=2-1, to=3-1]
	\arrow["\sim", from=3-1, to=3-2]
	\arrow["\iota_2", hook, from=2-2, to=3-2]
    \end{tikzcd}\]
     The top and bottom horizontal arrows are isomorphisms because $\hat{\shP}^\xi$ and $\hat{S}$ are already pro-representable. The surjectivity of $\pi_1$ is clear by definition, and the injectivity of $\iota_i$'s follow from \ref{lem: injectivity}. The surjectivity of $\pi_2$ follows easily from the argument of \ref{lem: formal smoothness} by iteratively considering square zero extensions. 
     

     
     Now it is easy to infer that the middle horizontal arrow is an isomorphism. Note that the tangent space $\Phi^\xi(k[\epsilon]/(\epsilon^2))$ is finite dimensional as it admits a surjection from $\hat{\shP}^\xi(k[\epsilon]/(\epsilon^2))$. Schlessinger's criterion \cite[Thm~2.11]{Schlessinger} implies that $\Phi^\xi$ is pro-representable by a formal scheme $\hat{Z} := \Spf(R')$. By \cite[Lem.~2.1.7]{HO_conjecture}, the natural morphism $\hat{Z} \to \hat{S}$ is a closed immersion, and by \ref{lem: formal smoothness} the morphism $\hat{\shP}^\xi \to \hat{Z}$ is formally smooth. This implies that $\hat{Z}$ is indeed the scheme-theoretic image of $\hat{\shP}^\xi$, i.e., $\hat{Z} = \Def(\xi)$. Moreover, it is clear from the definition of the functor $\Phi^\xi$ that $\Phi^\xi = \Phi^{\xi'}$ if $\xi \sim \xi'$, so that $\Def(\xi)$ only depends on the class of $\xi$ in $\NS(\sX_s)$.


     It remains to show that the ideal $J = \ker(R \to R')$ defining $\hat{Z} \subseteq \hat{S}$ is generated by $h$ elements, which follows from a simple adaptation of Delgine's arguments for \cite[Prop.~1.5]{Del02}. By choosing a section of $\hat{\shP}^\xi \to \hat{Z}$ we obtain a line bundle $\wt{\xi} \in \Pic(\sX_{\hat{Z}})$ which deforms $\xi$. Let $\frakm \subseteq R$ be the maximal ideal. The obstruction to deforming $\wt{\xi}$ to $\hat{Z}'' := \Spf(R / \frakm J)$ is given by an element $\frak{o} \in \H^2(X_s, J / \frakm J) \iso \H^2(\sO_{\sX_s}) \tensor J / \frakm J$. By choosing a basis for $\H^2(\sO_{\sX_s})$, $\frak{o}$ gives us $h$ elements in $J / \frakm J$. Choose liftings $f_1, \cdots, f_h$ of these elements to $J$ and set $\Sigma \subseteq S$ to be the ideal $\frakm J + (f_1, \cdots, f_h)$. Then by construction $\hat{Z} \subseteq \Sigma \subseteq \hat{Z}''$. Since $\wt{\xi}$ deforms to $\Sigma$, there is a section $\Sigma \to \hat{\shP}^\xi$. As $\hat{Z}$ is the scheme-theoretic image, $\Sigma \subseteq \hat{Z}$. Therefore, we must have $\hat{Z} = \Sigma$ and hence $\frakm J + (f_1, \cdots, f_h) = J$. By Nakayama's lemma $J = (f_1, \cdots, f_h)$. 
\end{proof}

Below are some simple observations for future reference. 
\begin{lemma}
\label{lem: Def(xi)}
    In \ref{thm: Deligne generalized}, for any $m \in \IN$ with $p \nmid m$, $\Def(\xi) = \Def(m \xi)$. 
\end{lemma}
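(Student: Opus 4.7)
The plan is to prove the equality by showing that the two formal subschemes $\Def(\xi)$ and $\Def(m\xi)$ of $\hat{S}$ have identical functors of points on $\mathsf{Art}_W$. Concretely, for any $A \in \mathsf{Art}_W$ and $z \in \hat{S}(A)$, the criterion extracted from the proof of \ref{lem: formal smoothness} (via Ogus's Proposition 1.12, as cited there) says that $z$ factors through $\Def(\xi)$ if and only if the unique horizontal extension of $c_{1,\cris}(\xi) \in \Gamma((s/W)_\cris, \bH|_s)$ to $(\bH|_z)_A \iso \H^2_\dR(\sX_z/A)$ lies in $\Fil^1$. The uniqueness of this horizontal extension is guaranteed by the Dwork-type trick in \ref{prop: Frobenius trick}, which allows us to denote it consistently by $c_1(\xi)|_z$.

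By the additivity of the first Chern class one has $c_1(m\xi)|_z = m \cdot c_1(\xi)|_z$, so $z$ factors through $\Def(m\xi)$ if and only if $m \cdot c_1(\xi)|_z \in \Fil^1 (\bH|_z)_A$. The key observation is then that since $p \nmid m$, $m$ is a unit in $W(k)$, and hence in every Artinian $W(k)$-algebra $A$. Because $\Fil^1 (\bH|_z)_A$ is an $A$-submodule (in fact a direct summand, by the definition of an admissible family), multiplication by the unit $m$ both preserves and reflects membership in $\Fil^1$: we have $c_1(\xi)|_z \in \Fil^1$ if and only if $m \cdot c_1(\xi)|_z \in \Fil^1$. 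Therefore $\Def(\xi)(A) = \Def(m\xi)(A)$ for every $A \in \mathsf{Art}_W$, and since formal subschemes of $\hat{S}$ are determined by their $\mathsf{Art}_W$-valued functors of points, we conclude $\Def(\xi) = \Def(m\xi)$.

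I do not anticipate any substantive obstacle; the argument exploits only the linearity of the crystalline Chern class together with the fact that $m$ is invertible in every $W(k)$-algebra. As an alternative one could argue from the ideal-theoretic description in the last paragraph of the proof of \ref{thm: Deligne generalized}: the obstruction class $\mathfrak{o}(\xi) \in \H^2(\sO_{\sX_s}) \otimes J/\mathfrak{m}J$ scales by the unit $m$ when $\xi$ is replaced by $m\xi$, so the $h$ generators of the ideal defining $\Def(\xi)$ agree, up to units, with those defining $\Def(m\xi)$, yielding the same ideal.
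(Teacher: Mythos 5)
Your guiding idea --- that $c_{1,\cris}$ is additive and $m$ is a unit in $W$, so membership of a Chern class in $\Fil^1$ is insensitive to multiplication by $m$ --- is exactly the idea behind the paper's proof. But the way you package it contains a genuine gap. You assert, as a black-box criterion, that $z \in \hat{S}(A)$ factors through $\Def(\xi)$ if and only if ``the unique horizontal extension of $c_{1,\cris}(\xi)$ to $(\bH|_z)_A$'' lies in $\Fil^1$. Such a horizontal extension is not a well-defined object for an arbitrary $A \in \mathsf{Art}_W$: $\Spec(A)$ is in general not a PD-thickening of $\Spec(k)$ (the maximal ideal $\fm_A$ carries no divided powers in general), so the crystal structure does not transport $c_{1,\cris}(\xi)$ from $s$ to $z$; this is precisely the ``no flat trivialization'' difficulty flagged in the discussion following \ref{sec: Def(xi) in char 0}. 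Proposition \ref{prop: Frobenius trick} gives only injectivity of the restriction map on Frobenius eigenvectors, i.e.\ uniqueness of an extension \emph{if one exists}; it produces no extension. And \cite[Prop.~1.12]{OgusCrystals}, as used in \ref{lem: formal smoothness}, is a criterion for extending a line bundle that already exists over $A'$ across a single square-zero thickening $A \to A'$ --- it is not an ``iff'' description of $\Def(\xi)(A)$ for arbitrary $A$. As stated, your ``criterion'' is essentially the content of \ref{thm: Deligne generalized} itself, so invoking it here is circular.

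The paper supplies the missing mechanism by an induction over square-zero extensions $R' \twoheadrightarrow R$: given $z' \in \Def(m\xi)(R')$ with a deformation $\xi'_{z'}$ of $m\xi$, the inductive hypothesis provides an \emph{actual} deformation $\xi_z$ of $\xi$ over $z = z' \tensor_{R'} R$; Lemma \ref{lem: injectivity} identifies $c_{1,\cris}(m\xi_z)$ with $c_{1,\cris}(\xi'_z)$, Ogus's criterion then extends $m\xi_z$ across the square-zero step, and only at that point does the invertibility of $m$ enter, to pass from $m\xi_z$ back to $\xi_z$. Your alternative argument via obstruction classes has a parallel defect: $\mathfrak{o}(\xi)$ and $\mathfrak{o}(m\xi)$ live in $\H^2(\sO_{\sX_s}) \tensor J/\fm J$ and $\H^2(\sO_{\sX_s}) \tensor J_m/\fm J_m$ for the two a priori different ideals $J$ and $J_m$ defining $\Def(\xi)$ and $\Def(m\xi)$, so ``the generators agree up to units'' is not meaningful before one already knows the two ideals coincide.
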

\begin{proof}
    We run a simple induction. Suppose that $R' \to R$ is a surjection in $\mathsf{Art}_W$ with square zero kernel and $\Def(\xi)(R) = \Def(m \xi)(R) \subseteq \hat{S}(R)$. We just need to show that $\Def(\xi)(R') = \Def(m \xi)(R')$. That $\Def(\xi)(R') \subseteq \Def(m \xi)(R')$ is automatic, so we only need to show the reverse inclusion. 
    
    Take $z' \in \Def(m \xi)(R')$ and a deformation $\xi'_{z'}$ of $m\xi$ to $z'$. Let $z := z' \tensor_{R'} R$. Then by assumption $z \in \Def(\xi)(R)$ so there exists a deformation $\xi_z$ of $\xi$ over $z$. Setting $\xi'_z := \xi'_{z'} \tensor_{R'} z$. By \ref{lem: injectivity}, $c_{1, \cris}(m \xi_z) = c_{1, \cris}(\xi'_z)$. Therefore, by \cite[Prop.~1.12]{OgusCrystals}, the fact that $\xi'_z$ deforms over $z'$ implies that $m \xi_z$ deforms to $R'$ as well. Applying this proposition again, as $c_{1, \cris}(m \xi_z) = m c_{1, \cris}(\xi_z)$ and $m$ is a unit, $\xi_z$ also deforms to $z'$, i.e., $z' \in \Def(\xi)(R')$. 
\end{proof}
\begin{lemma}
\label{lem: Galois descent}
    Let $\kappa$ be a perfect field and $X$ be an admissible smooth proper variety over $\kappa$. Then $\NS(X)_{\mathrm{tf}} \to (\NS(X_{\bar{\kappa}})_{\mathrm{tf}})^{\Gal_\kappa}$ is an isomorphism. If $\mathrm{char\,} \kappa = p > 0$, then $\NS(X)$ has no $p$-torsion. 
\end{lemma}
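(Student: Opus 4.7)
The plan is to handle the two assertions separately. For the first, admissibility together with \ref{prop: Kleiman} ensures that $\shP^0_{X/\kappa}$ is smooth over $\kappa$, so that the quotient group scheme $\shP_{X/\kappa}/\shP^0_{X/\kappa}$ is \'etale over $\kappa$. Since $\kappa$ is perfect, \'etale group schemes over $\kappa$ correspond to discrete $\Gal_\kappa$-modules, yielding a canonical identification $\NS(X) = \NS(X_{\bar\kappa})^{\Gal_\kappa}$ on the level of abstract abelian groups (any gap between $\NS(X)$ and the $\kappa$-points of the component group scheme is of torsion nature, coming from $H^1$ with coefficients in the divisible group $\shP^0(\bar\kappa)$, and so disappears after passing to torsion-free quotients).

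It then suffices to establish the purely formal statement that for any finitely generated $\Gal_\kappa$-module $M$, the natural injection $(M^{\Gal_\kappa})_{\tf} \hookrightarrow (M_{\tf})^{\Gal_\kappa}$ is an isomorphism. I would prove this by applying $(-)^{\Gal_\kappa}$ to the short exact sequence $0 \to T \to M \to M_{\tf} \to 0$, where $T$ is the finite torsion subgroup of $M$. The resulting long exact sequence identifies the cokernel of $(M^{\Gal_\kappa})_{\tf} = M^{\Gal_\kappa}/T^{\Gal_\kappa} \hookrightarrow (M_{\tf})^{\Gal_\kappa}$ with a subgroup of $H^1(\Gal_\kappa, T)$. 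Since $(M_{\tf})^{\Gal_\kappa}$ is torsion-free while $H^1(\Gal_\kappa, T)$ is torsion (as $T$ is finite and continuous cohomology of a profinite group with finite coefficients is torsion), the connecting map must vanish. Applying this to $M = \NS(X_{\bar\kappa})$ gives the first claim.

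For the $p$-torsion-freeness when $\mathrm{char\,}\kappa = p$, the first part reduces the problem to showing that $\NS(X_{\bar\kappa})$ has no $p$-torsion. Given $\xi \in \Pic(X_{\bar\kappa})$ representing a $p$-torsion class in $\NS$, divisibility of the abelian variety $\shP^0_{X_{\bar\kappa}/\bar\kappa}$ lets me modify $\xi$ by an element of $\shP^0(\bar\kappa)$ so that $p\xi = 0$ in $\Pic(X_{\bar\kappa})$ itself. The crystalline Chern class $c_{1,\cris}(\xi) \in H^2_{\cris}(X_{\bar\kappa}/W)$ then satisfies $p \cdot c_{1,\cris}(\xi) = 0$, and admissibility (which includes torsion-freeness of $H^2_{\cris}(X_{\bar\kappa}/W)$) forces $c_{1,\cris}(\xi) = 0$.

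The main obstacle is then to deduce from $c_{1,\cris}(\xi) = 0$ that $\xi$ actually lies in $\shP^0$. I would handle this via the injectivity of the $p$-adic Chern class $\NS(X_{\bar\kappa}) \otimes \IZ_p \hookrightarrow H^2_{\cris}(X_{\bar\kappa}/W)$, established by combining the flat Kummer sequence $1 \to \mu_{p^n} \to \mathbb{G}_m \to \mathbb{G}_m \to 1$ with the Berthelot--Illusie comparison between flat cohomology with $\mu_{p^n}$-coefficients and crystalline cohomology modulo $p^n$; under this comparison the Chern class of a line bundle in fppf cohomology matches its crystalline Chern class, and torsion-freeness of $H^2_{\cris}$ rules out divisible contributions and yields injectivity on the $p$-primary part of $\NS$.
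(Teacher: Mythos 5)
Your treatment of the second assertion ($p$-torsion-freeness) is essentially sound and close in spirit to the paper's, which simply combines the torsion-freeness of $\H^2_\cris(X/W(\kappa))$ with the injectivity of the Chern class on $\NS$ that admissibility guarantees. The problem is the first assertion. The ``purely formal statement'' you rely on --- that $(M^{\Gal_\kappa})_{\mathrm{tf}} \to (M_{\mathrm{tf}})^{\Gal_\kappa}$ is an isomorphism for every finitely generated Galois module $M$ --- is false, and the step that fails is your claim that the connecting map $(M_{\mathrm{tf}})^{\Gal_\kappa} \to \H^1(\Gal_\kappa, T)$ must vanish because its source is torsion-free and its target torsion: a homomorphism from a torsion-free group to a torsion group need not be zero (e.g.\ $\IZ \to \IZ/2$). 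Concretely, let $G = \IZ/2$ act on $M = \IZ e \oplus (\IZ/2)t$ by $\sigma(e) = e + t$, $\sigma(t) = t$; then $(M^G)_{\mathrm{tf}} = 2\IZ e$ while $(M_{\mathrm{tf}})^G = \IZ e$, and the connecting map is the nonzero map $\IZ e \to \H^1(\IZ/2, \IZ/2) = \IZ/2$. The same false principle appears in your first paragraph, where you assert that a torsion cokernel ``disappears after passing to torsion-free quotients'' (compare $2\IZ \subset \IZ$). What your argument actually delivers is that $\NS(X)_{\mathrm{tf}} \to (\NS(X_{\bar{\kappa}})_{\mathrm{tf}})^{\Gal_\kappa}$ is injective with finite cokernel; it cannot exclude a finite-index discrepancy.

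That discrepancy is precisely what the paper's geometric input is there to kill. The paper starts from the well-known rational isomorphism $\NS(X)_\IQ \to (\NS(X_{\bar{\kappa}})_\IQ)^{\Gal_\kappa}$ and then argues prime by prime using saturation in cohomology: for $\ell \neq p$ the map $\NS(X) \otimes \IZ_\ell \to \H^2_\et(X_{\bar{\kappa}}, \IZ_\ell(1))$ is injective with torsion-free cokernel, and for $\ell = p$ the same holds for $\NS(X) \otimes \IZ_p \to \H^2_\cris(X/W(\kappa))^{F = p}$ by admissibility. Given $\bar{x} \in (\NS(X_{\bar{\kappa}})_{\mathrm{tf}})^{\Gal_\kappa}$ with $n\bar{x}$ in the image of $\NS(X)$, these saturation statements force $\bar{x}$ itself into the image of $\NS(X) \otimes \IZ_\ell$ for each $\ell \mid n$, which yields surjectivity on torsion-free quotients integrally. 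To repair your proof you must import this (or an equivalent) saturation input; the formal Galois-cohomology manipulation alone cannot close the gap.
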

 The subscript ``tf'' stands for the torsion-free quotient.
\begin{proof}
It is well known that the statement is true for $\NS(X)_\IQ$, i.e., the natural map $\NS(X)_{\IQ} \to (\NS(X_{\bar{\kappa}})_\IQ)^{\Gal_\kappa}$ is an isomorphism (see e.g., \cite[Prop.~6.1(b)]{PTL}). For every $\ell \neq \mathrm{char\,}k$, $\NS(X) \tensor \IZ_\ell \to \H^2_\et(X_{\bar{\kappa}}, \IZ_\ell(1))$ is injective and has torsion-free cokernel. Similarly, if $\mathrm{char\,} k = p > 0$, as $\H^2_\cris(X/W(\kappa))$ is $p$-torsion free, $\NS(X)$ has no $p$-torsion and the map $\NS(X) \tensor \IZ_p \to \H^2_\cris(X/W(\kappa))^{F = p}$ is injective and has torsion-free cokernel. Hence the statement for $\NS(X)_\IQ$ implies the statement for $\NS(X)_{\mathrm{tf}}$. 
\end{proof}

\begin{lemma}
\label{globalimagelem2}
    Suppose that $S$ is a connected smooth scheme over $W$ with generic point $\eta$, $s \in S(k)$ is a closed point, and $\shP(S, s, \xi)$ dominates $S$. Then $\xi$ deforms to the formal completion of $S$ at $s$. 
\end{lemma}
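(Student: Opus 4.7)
My plan is to show, via a dimension count, that the ideal $J \subseteq \hat\sO_{S, s^\flat}$ defining $\Def(\xi)$ inside $\hat{S} := \Spf \hat\sO_{S, s^\flat}$ is zero. By Theorem~\ref{thm: Deligne generalized}, this is precisely the assertion that $\xi$ deforms to all of $\hat{S}$.

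Since $S$ is integral (being connected smooth over $W$), the hypothesis that $\shP(S, s, \xi)$ dominates $S$ yields an irreducible component $C$ of $\shP_{\sX/S}$ through $(s, \xi)^\flat$ with $C \to S$ dominant. I will argue that the generic fiber $C_\eta$ coincides with a full connected component of $\shP_{\sX_\eta/\eta}$. The connected components of $\shP_{\sX_\eta/\eta}$ are $\shP^0_{\sX_\eta/\eta}$-torsors, hence irreducible of dimension $h^{0, 1}$, and the irreducible subscheme $C_\eta$ is contained in one of them, say $D$; were the containment $C_\eta \subseteq D$ strict, the closure $\overline{D}$ in $\shP_{\sX/S}$ would be an irreducible closed subscheme strictly containing $C$, contradicting the maximality of $C$. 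Consequently $\dim C_\eta = h^{0, 1}$ and $\dim C = \dim S + h^{0, 1} = \dim \hat{S} + h^{0, 1}$.

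Next, the formal completion $\hat{C}$ of $C$ at $(s, \xi)^\flat$ is a closed formal subscheme of $\hat\shP^\xi$, so
\[
    \dim \hat\shP^\xi \ge \dim \hat{C} = \dim C = \dim \hat{S} + h^{0, 1}.
\]
By Theorem~\ref{thm: Deligne generalized}, the morphism $\hat\shP^\xi \to \Def(\xi)$ is formally smooth, and its closed fiber is the formal completion of $\shP_{\sX_s/k}$ at $\xi$, which --- being the completion at a $k$-point of the smooth $k$-scheme $\shP^\xi_{\sX_s/k}$ (a $\shP^0_{\sX_s/k}$-translate) --- is regular of dimension $h^{0, 1}$. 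Hence $\dim \hat\shP^\xi = \dim \Def(\xi) + h^{0, 1}$, and combining with the preceding inequality gives $\dim \Def(\xi) \ge \dim \hat{S}$. Since $\Def(\xi) \subseteq \hat{S}$, the two dimensions agree; as $\hat\sO_{S, s^\flat}$ is a regular local ring (hence a domain), the equality $\dim \hat\sO_{S, s^\flat}/J = \dim \hat\sO_{S, s^\flat}$ forces $J = 0$, so $\Def(\xi) = \hat{S}$.

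The main technical point is identifying $C_\eta$ with a full connected component of $\shP_{\sX_\eta/\eta}$, which relies on the torsor description of these components (following from the smoothness of $\shP^0$, i.e.\ Proposition~\ref{prop: Kleiman}) together with the maximality of the irreducible component $C$. Once this is in hand, the rest of the argument is a clean dimension comparison powered by the formal smoothness statement in Theorem~\ref{thm: Deligne generalized}.
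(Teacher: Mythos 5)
Your proof is correct, but it takes a genuinely different route from the paper's. The paper argues through the N\'eron--Severi group: since $\shP(S,s,\xi)$ dominates $S$, the class of $\xi$ is a specialization of a class on $\sX_{\bar{\eta}}$; after passing to a connected \'etale cover one may assume $\NS(\sX_{\bar{\eta}})_{\tf}=\NS(\sX_{\eta})_{\tf}$, so (using that $\NS(\sX_s)$ has no $p$-torsion) some prime-to-$p$ multiple $m\xi$ lies in the image of $\NS(\sX_\eta)\to\NS(\sX_s)$; every line bundle on $\sX_\eta$ extends over the normal scheme $S$ by \ref{lem: generic LB}(a), and one concludes via \ref{lem: Def(xi)} and the fact that $\Def(\xi)$ only depends on the class of $\xi$ in $\NS(\sX_s)$. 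Your dimension count avoids all of this bookkeeping (no \'etale cover, no passage to a multiple of $\xi$), at the cost of leaning on the characteristic-zero smoothness of $\shP_{\sX_\eta/\eta}$ --- so that its connected components are irreducible of dimension $h^{0,1}$, which together with the maximality of $C$ gives $C_\eta=D$ --- and on mixed-characteristic dimension theory. Your final step, deducing $J=0$ from $\dim \hat{\sO}_{S,s^\flat}/J=\dim\hat{\sO}_{S,s^\flat}$, is valid because $\hat{\sO}_{S,s^\flat}\cong W[\![x_1,\dots,x_n]\!]$ is a catenary, equidimensional complete local domain. The one place you should be more careful is the chain $\dim\hat{\shP}^\xi\ge\dim\hat{C}=\dim C=\dim S+h^{0,1}=\dim\hat{S}+h^{0,1}$: over a mixed-characteristic base, global and local dimensions cannot be equated casually, and what you actually need is the local statement $\dim\sO_{C,(s,\xi)^\flat}=\dim\sO_{S,s^\flat}+\mathrm{trdeg}_{k(\eta)}k(\eta_C)=\dim\sO_{S,s^\flat}+h^{0,1}$, which follows from the dimension formula over the universally catenary base $W$ together with the fact that $(s,\xi)^\flat$ is a closed point of its fibre lying over the closed point of $\Spec(W)$. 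This is a presentational matter rather than a gap. Each approach buys something: the paper's produces an explicit global line bundle realizing a prime-to-$p$ multiple of the class of $\xi$ (in line with how the lemma is used elsewhere), while yours gives a more self-contained, purely dimension-theoretic proof that $\Def(\xi)=\hat{S}$ directly from \ref{thm: Deligne generalized}.
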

\begin{proof}
    Since $\shP(S, s, \xi)$ dominates $S$, $\xi$ is obtained by specializing a line bundle on $\sX_{\bar{\eta}}$. Up to replacing $S$ by a connected \'etale cover and $s$ by a lift, we may assume that $\NS(\sX_{\bar{\eta}})_{\mathrm{tf}} = \NS(\sX_\eta)_{\mathrm{tf}}$. As $\NS(\sX_s)$ has no $p$-torsion, for some $m \in \IN$ with $p \nmid m$, $m \xi$ lies in the image of $\NS(\sX_\eta) \to \NS(\sX_s)$. As every line bundle on $\sX_\eta$ extends to $\sX$, $m\xi$ (and hence $\xi$, by \ref{lem: Def(xi)}) deforms to the formal completion of $S$ at $s$. 
\end{proof}

\section{Jets and Period Maps}

\subsection{Jet Spaces}
\label{jetspsec}

Our arguments will use \emph{jet spaces}, which we now define. Given a ring $R$, we set 
\[ A^{d}_{r, R} = R[t_{1}, \hdots, t_{d}]/(t_{1}, \hdots, t_{d})^{r+1} , \]      
and write $\mathbb{D}^{d}_{r, R}$ for $\Spec (A^{d}_{r, R})$. If $S$ is an $R$-scheme, the \emph{jet space} $J^{d}_{r} S$ associated to the non-negative integers $d, r \geq 0$ and $S$ is defined to be the $R$-scheme representing the functor $\textsf{Sch}_{R} \to \textsf{Set}$ given by
\[ T \mapsto \Hom_{R}(T \times_{R} \mathbb{D}^{d}_{r, R}, S), \hspace{1.5em} [T \to T'] \mapsto [\Hom_{R}(T' \times_{R} \mathbb{D}^{d}_{r, R}, S) \to \Hom_{R}(T \times_{R} \mathbb{D}^{d}_{r, R}, S)] , \]
where the natural map $\Hom_{R}(T' \times_{R} \mathbb{D}^{d}_{r, R}, S) \to \Hom_{R}(T \times_{R} \mathbb{D}^{d}_{r, R}, S)$ obtained by pulling back along $T \times_{R} \mathbb{D}^{d}_{r, R} \to T' \times_{R} \mathbb{D}^{d}_{r, R}$. To check representability of the functor one may produce as in \cite[Prop 2.2]{urbanik2021transcendence} an isomorphism to the functor defining the Weil restriction functor $\textrm{Res}_{\mathbb{D}^{d}_{r,R}/R}(\mathbb{D}^{d}_{r,R} \times_{R} S)$, and hence reduce to the corresponding representability questions for Weil restrictions. For our purposes it will be enough that $J^{d}_{r} 
 S$ exists when $S$ is a quasi-projective $R$-scheme.

 A few basic properties of the functors $J^{d}_{r}$ which are useful and immediately checked from the definition are as follows:
 \begin{itemize}
     \item[(i)] given a ring map $R \to R'$, we have that $J^{d}_{r} (S_{R'}) = (J^{d}_{r} S)_{R'}$ (c.f. \cite[Prop 6.2.2(2)]{zbMATH07565706} for the corresponding statement for Weil restrictions); 
     \item[(ii)] each map $\mathbb{D}^{d}_{r, R} \to \mathbb{D}^{d'}_{r',R}$ between infinitesimal disks induces a morphism of functors $J^{d'}_{r'} \to J^{d}_{r}$ by pre-composition; and
     \item[(iii)] given a further composition $\mathbb{D}^{d'}_{r',R} \to \mathbb{D}^{d''}_{r'',R}$ the maps $J^{d'}_{r'} \to J^{d}_{r}$, $J^{d''}_{r''} \to J^{d'}_{r'}$ and $J^{d''}_{r''} \to J^{d}_{r}$ are related by natural transformations in the obvious way.
 \end{itemize}

Denote by $\pi^r_{r - k}$ the natural map $J^{d}_{r} \to J^{d}_{r-k}$ induced by the natural reduction map $A^d_{r, R} \to A^d_{r -k , R}$ sending each $t_i$ to $t_i$. If $r = k$, we simply write $\pi^r_0 : J^d_r S \to J^d_0 S = S$ as $\pi$. Given $j \in J^d_r S(R)$, we also write $j(0)$ for the image $\pi(j)$ in $S(R)$.

Lastly, we write $J^{d}_{r,nd} S \subset J^{d}_{r} S$ for the open subscheme of \emph{non-degenerate} jets, and $J^{d}_{r,c} S \subset J^{d}_{r} S$ for the closed subscheme of constant jets; see \cite[Def. 3.4]{urbanik2022algebraic} and \cite[\S3]{urbanik2022algebraic} for precise definitions. The field-valued points of $J^{d}_{r,nd} S$ can be identified with maps from $\ID^{d}_{r}$ to $S$ which induce embeddings on the level of tangent spaces, and the field-valued points of $J^{d}_{r,c} S$ can be identified with maps from $\ID^{r}_{d}$ to $S$ which are constant. The construction of both $J^{d}_{r,nd} S$ and $J^{d}_{r,c} S$ is compatible with base-change in the sense of (i) above.

\subsection{Period Jets}
\label{sec: period jets}

In this section we fix an integral domain $R$, a smooth $R$-scheme $S$ and the data $(\sH, \Fil^{\bullet}, \nabla)$ where $\sH$ is a vector bundle of rank $m \in \IN_{> 0}$, $\Fil^\bullet$ is a decreasing filtration on $\sH$ and an $R$-linear integrable connection $\nabla : \sH \to \sH \tensor \Omega^1_{S/R}$. We additionally write $\che{L}$ for the $R$-scheme parameterizing flags on $R^{m}$ whose graded quotients are locally free and are of the same rank as those of $\Fil^{\bullet}$ (c.f. \cite[\S1.2]{urbanik2022algebraic}). Below we shall fix some $r \in \IN$ and assume that $r!$ is invertible in $R$.

\begin{definition}
\label{rlimpdef}
    Let $k$ be a field and consider $s \in S(k)$. Let $\Delta^r_s := \Spec(\sO_{S_k, s}/ \fm^{r + 1})$, where $\fm$ is the maximal ideal of the local ring $\sO_{S_k, s}$. Given a frame (i.e., a basis) $b^\bullet = \{ b^1, \cdots, b^m \}$ of $\sH_s$, there is a unique way of extending $b^\bullet$ to a set of horizontal generators of $\sH|_{\Delta^r_s}$, which induces a horizontal trivialization $(\sH, \nabla)|_{\Delta^r_s} \iso (\sO_{\Delta^r_s}, d)^{\oplus m}$.\footnote{This fact can be dug out from \cite[Lem.~2.6]{urbanik2022algebraic}. A more direct alternative argument is given in the appendix.} Then the filtration $\Fil^\bullet \sH$ restricts to a filtration on $(\sO_{\Delta^r_s})^{\oplus m}$, which gives rise to a morphism $\psi_{b^\bullet} : \Delta^r_s \to \che{L}_k$. We call $\psi_{b^\bullet}$ the \textit{$r$th order (local) infinitesimal period morphism ($r$-limp)} associated to the frame $b^\bullet$. 
\end{definition}

\subsubsection{} \label{sec: define P^d_r} We will study $r$-limps through the maps of jets they induce. For any $d \in \IN$, consider the $\GL_{m,R}$-torsor $\gamma^d_r : \sP^d_r \to J^d_r S$ obtained as the base-change along $J^d_r S \to S$ of the frame bundle of $\sH$. Then for every field $k$, there is a natural bijection between $\sP^d_r(k)$ and pairs of the form $(j, b^\bullet)$, where $j$ is a morphism $\ID^d_{r, k} \to S_k$ and $b^\bullet$ is a frame on $\sH_{s}$ for $s := j(0)$. Note that $j : \ID^d_{r, k} \to S_k$ uniquely factors through $\Delta^r_s$. We can then define a natural map $\alpha^d_r(k) : \sP^d_r(k) \to J^d_r \che{L} (k)$ by sending each $(j, b^\bullet)$ to the composition  
$$\ID^d_{r, k} \stackrel{j}{\to} \Delta^r_s \stackrel{\psi_{b^\bullet}}{\to} \che{L}_k. $$

The following result says that the maps $\alpha^d_r(k)$ ``glue'' in some natural way, and can be obtained by unpacking \cite[Thm~1.14]{urbanik2022algebraic} and its proof: 

\begin{theorem}
\label{bigjetthm}
Associated to the triple $(\sH, \Fil^\bullet, \nabla)$, there exists a $\GL_{m, R}$-equivariant morphism $\alpha^d_r : \sP^d_r \to J^d_r \che{L}$ whose induced map on $k$-points for each algebraically closed field $k$ is $\alpha^d_r(k)$ as above. The resulting morphism 
\[ \eta^{d}_{r} : J^{d}_{r} S \to \GL_{m, R} \backslash J^{d}_{r} \che{L} \]
of Artin stacks over $R$ is compatible with 
\begin{itemize}
\item[-] base-change along maps $R \to R'$ of $\IZ[1/r!]$-algebras;
\item[-] pullback along maps $g : S' \to S$ of smooth $R$-schemes, in the sense that $\eta^{d}_{r} \circ J^{d}_{r} g$ is the map associated to $(g^{*} \sH, g^{*} \Fil^{\bullet}, g^{*} \nabla)$; and
\item[-] maps of functors $J^{d}_{r} \to J^{d'}_{r'}$ for $r' \leq r$ induced by maps $\mathbb{D}^{d'}_{r',R} \to \mathbb{D}^{d}_{r,R}$, in the sense that the natural square involving $J^{d}_{r} S, J^{d'}_{r'} S, \GL_{m} \backslash J^{d}_{r} \che{L}$ and $\GL_{m} \backslash J^{d'}_{r'} \che{L}$ commutes. 
\end{itemize}
\end{theorem}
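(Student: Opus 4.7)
The plan is to construct $\alpha^d_r$ universally on $\sP^d_r$ by producing a horizontal trivialization of $(\sH,\nabla)$ along the tautological jet, reading off $\Fil^\bullet$ in that trivialization, and descending the resulting morphism to $\GL_{m,R}\backslash J^d_r \che{L}$; the compatibility assertions for $\eta^d_r$ will then follow by naturality of the ingredients.

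First I would set up the universal data. Over $\sP^d_r$ one has $\tau := \pi \circ \gamma^d_r : \sP^d_r \to S$, a tautological frame $\beta^\bullet$ of $\tau^*\sH$, and by adjunction a morphism $\tilde{\mu} : \ID^d_{r,\sP^d_r} \to S$ satisfying $\tilde{\mu} \circ i_0 = \tau$ for the origin section $i_0 : \sP^d_r \into \ID^d_{r,\sP^d_r}$. The central lemma to establish is a \emph{horizontal extension} statement: for any $R$-scheme $T$, any morphism $\sigma : \ID^d_{r,T} \to S$ and any frame $b^\bullet$ of $(\sigma \circ i_{0,T})^*\sH$, there is a unique trivialization $\Theta_{\sigma,b^\bullet} : (\sigma^*\sH, \sigma^*\nabla) \iso (\sO^{\oplus m}_{\ID^d_{r,T}}, d)$ matching $b^\bullet$ on the origin section, where the pulled-back connection is understood relative to $T$. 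This is proved Zariski-locally on $T$: the pulled-back connection has the form $d + A$ in the $t$-coordinates, and the equation $dP + AP = 0$ with $P|_{t=0} = I$ is solved term-by-term in the multi-index expansion of $P$. At each step the recursion requires inverting an integer at most $r$, which is permitted because $r!$ is invertible in $R$, and integrability of $\nabla$ ensures the recursion is consistent across the different $t_i$-directions. Uniqueness is immediate from the recursion being forced, and naturality in $T$ holds because the recursion is purely polynomial.

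Applying the lemma to $\sigma = \tilde{\mu}$ and $b^\bullet = \beta^\bullet$, the pullback $\tilde{\mu}^* \Fil^\bullet \sH$ is transported to a filtration of $\sO^{\oplus m}_{\ID^d_{r,\sP^d_r}}$ by subbundles of the ranks defining $\che{L}$, hence is classified by a morphism $\ID^d_{r,\sP^d_r} \to \che{L}$ over $R$. By the universal property of $J^d_r \che{L}$ this is the data of a morphism $\alpha^d_r : \sP^d_r \to J^d_r \che{L}$. Specializing to an alg-geometric point $(j,b^\bullet)$ of $\sP^d_r$, the trivialization $\Theta_{j,b^\bullet}$ coincides with the horizontal extension used in Definition~\ref{rlimpdef}, so the resulting map is exactly $\psi_{b^\bullet} \circ j$, showing that $\alpha^d_r$ recovers the pointwise rule $\alpha^d_r(k)$. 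Replacing $\beta^\bullet$ by $g \cdot \beta^\bullet$ for $g \in \GL_{m,R}$ transforms $\Theta$ by the right action of $g$, so the classifying morphism to $\che{L}$ changes by the standard $\GL_{m,R}$-action; this gives the equivariance and allows descent to $\eta^d_r : J^d_r S \to \GL_{m,R}\backslash J^d_r \che{L}$.

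The three compatibility properties then follow by naturality. Base-change along any map $R \to R'$ of $\IZ[1/r!]$-algebras works because jet spaces, the frame bundle, $\che{L}$ and the polynomial recursion defining $\Theta$ all commute with base change. Pullback along $g : S' \to S$ is handled by noting that $g^*(\sH,\Fil^\bullet,\nabla)$ yields a horizontal extension obtained by precomposing with $g$, so the uniqueness part of the lemma forces $\eta^d_r \circ J^d_r g$ to equal the map associated to the pullback data. Finally, for a composition $\ID^{d'}_{r',R} \to \ID^d_{r,R}$ with $r' \leq r$, the restriction of a horizontal frame on $\ID^d_{r,T}$ remains horizontal on $\ID^{d'}_{r',T}$, so $\Theta$ is compatible with truncation, giving the required commutativity of the square. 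I expect the main obstacle to lie in the careful proof of the horizontal extension lemma --- in particular, checking that the polynomial recursion genuinely defines a morphism of $R$-schemes rather than a field-by-field recipe, and handling the interaction between the multiple $t_i$-direction recursions via integrability. Once this is established, the remainder of the proof reduces to formal naturality checks.
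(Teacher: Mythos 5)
Your proposal is correct and follows essentially the route the paper intends: the paper itself only cites \cite[Thm~1.14]{urbanik2022algebraic} for this statement, and your construction (universal horizontal trivialization over $\sP^d_r$, reading off $\Fil^\bullet$ to get a map $\ID^d_{r,\sP^d_r}\to\che{L}$, jet-space adjunction, and descent by frame-change equivariance) is exactly the unpacking being referenced. The one technical ingredient, your horizontal extension lemma, is supplied in the paper's appendix via Katz's closed-form projector $\sum_\alpha \tfrac{(-1)^{|\alpha|}}{\alpha!}x^\alpha\hat{\nabla}^\alpha$, which is equivalent to your term-by-term recursion and uses the invertibility of $r!$ in the same way.
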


\section{Proofs of Theorems}
\subsection{General Setup} \label{sec: general setup}
Let $R \subseteq \IC$ be a sub-$\IZ[1/N]$-algebra for some $N \in \IN_{>0}$ and let $F \subseteq \IC$ be its fraction field. Suppose that $\sS$ is a smooth quasi-projective $R$-scheme such that $\sS_\IC$ is connected and $f : \sX \to \sS$ be an admissible family of relative dimension $d_0$. Choose a polarization (i.e., a relatively ample line bundle) $\bxi_0$ on $\sX/\sS$. Let $\sH$ be the filtered flat vector bundle $R^2 f_* \Omega^\bullet_{\sX/\sS}$ on $\sS$, equipped with its Hodge filtration $\Fil^\bullet$ and Gauss-Manin connection $\nabla$. We let $\eta$ be the generic point of $\sS$ and $\bar{\eta}$ is a geometric point over $\eta$.

Let $\IV = (\IV_B, \IV_\dR)$ be the VHS given by on $\IV_B := R^2 f_{\IC *} \IQ$ and $\IV_\dR := \sH|_{\sS_\IC}$. Note that $\IV_B$ has a natural integral structure given by $(R^2 f_{\IC*} \IZ)_{\mathrm{tf}}$, where ``tf'' standards for the torsion-free quotient. Using $\bxi_0$, one defines a symmetric bilinear pairing on $\IV$ via the usual formula $\< x, y \> = x \cup y \cup c_1(\bxi_{0, \IC})^{d_0 - 2}$, where $x, y$ are local sections. As $\bxi_0$ is defined over $S$, using the same formula one defines a symmetric bilinear pairing on $\sH$. We impose the following assumptions and notations. 
\begin{enumerate}[label=\upshape{(\roman*)}]
    \item $f_\IC$ has large monodromy and sufficiently large period image (recall \ref{def: large monodromy}). 
    \item $\Mon(\IV_B)$ is connected. 
    \item The pairing on $\sH$ induced by $\bxi_0$ is self-dual. 
    \item Every prime dividing the order of $\NS(\sX_{\bar{\eta}})_{\mathrm{tor}}$ is a prime factor of $N$, and $\NS(\sX_{\bar{\eta}})[N^{-1}]$ equipped with the pairing induced by $\bxi_0$ is self-dual as a quadratic $\IZ[1/N]$-lattice. 
    \item The natural maps $\NS(\sX_\eta)_\tf \to \NS(\sX_{\bar{\eta}})_\tf$ and $\NS(\sX_\eta)[N^{-1}] \to \NS(\sX_{\bar{\eta}})[N^{-1}]$ are isomorphisms. 
    \item The relative de Rham Chern class map sends\footnote{Choose a section of the composition $\Pic(\sX) \to \Pic(\sX_\eta) \to \NS(\sX_\eta)$. The choice is unimportant.} $\NS(\sX_\eta) \tensor_\IZ \sO_\sS$ isomorphically onto an orthogonal direct summand $\sH^0$ of $\sH$.
\end{enumerate}


Note that $\sH^0$ is a filtered flat sub-bundle of $\sH$. In fact $\sH^0 \subseteq \Fil^1 \sH$. Let $\eta_\IC$ be the generic point of $\sS_\IC$. Then the relative Chern class of $\NS(\sX_{\eta_\IC})_\tf$ gives rise to a sub-VHS $\IV^0 = (\IV^0_B, \IV^0_\dR)$ of $\IV$ such that $\IV^0_\dR = \sH^0|_{\sS_\IC}$. We denote by $\IV^2 = (\IV^2_B, \IV^2_\dR)$ and $\sH^2$ the orthogonal complements of $\IV^0$ and $\sH^0$ in $\IV$ and $\sH$ respectively, and write the natural projections $\IV \to \IV^2$ and $\sH \to \sH^2$ both as $\pi_2$. We write $h^{0, 2}$ for $h^{0, 2}(\IV) = h^{0, 2}(\IV^2)$. Note that as $\IV^0$ is a trivial VHS, $\Mon(\IV_B, s) = \Mon(\IV^2_B, s)$ for every $s \in \sS(\IC)$. 

\begin{definition}
\label{def: power-primitive} 
    Let $s \to \sS$ be a geometric point and let $p := \mathrm{char\,}k(s)$. Here we allow $p = 0$, in which case $\IZ_{(p)} = \IQ$. Recall that there is a well defined specialization map $\NS(\sX_\eta) \to \NS(\sX_s)$ and denote the image by $\NS(\sX_\eta)_s$. Then since by assumption (iv) 
    $\NS(\sX_{\eta}) \tensor \IZ_{(p)}$ is a torsion-free self-dual quadratic lattice over $\IZ_{(p)}$, $\NS(\sX_{\eta}) \tensor \IZ_{(p)}$ maps isomorphically onto $\NS(\sX_{\eta})_s \tensor \IZ_{(p)}$. Moreover, as $\NS(\sX_{\eta})_s \tensor \IZ_{(p)}$ is self-dual, it has to be an orthogonal direct summand, and there is an orthogonal decomposition 
    \begin{equation}
    \label{eqn: decomp over Zp}
        \NS(\sX_s) \tensor \IZ_{(p)} = [\NS(\sX_{\eta})_{s} \tensor \IZ_{(p)}] \oplus [\NS(\sX_{\eta})_{s} \tensor \IZ_{(p)}]^\perp.
    \end{equation} We make the following definition: For $\xi \in \Pic(\sX_s)$, we say that $\xi$ is \textit{power-primitive} if its class lies in $[\NS(\sX_{\eta})_{s} \tensor \IZ_{(p)}]^\perp$ and is nonzero mod $p$.
\end{definition}

\begin{lemma}
    \label{lem: primitive line bundle} Consider the setting in \ref{def: power-primitive} and write $k$ for $k(s)$. 
    \begin{enumerate}[label=\upshape{(\alph*)}]
        \item The map $c_{1, \dR} \tensor k : \NS(\sX_s) \tensor_\IZ k \to \sH_s = \H^2_\dR(\sX_s/k)$ sends $\NS(\sX_\eta)_s \tensor_\IZ k$ isomorphically onto $\sH^0_s \subseteq \sH_s$. 
        \item If $\xi \in \Pic(\sX_s)$ is power-primitive, then $c_{1, \dR}(\xi) \in \sH_s$ is nonzero and lies in $\sH^2_s \subseteq \sH_s$. 
        \item For every $\xi \in \NS(\sX_s)$, there exists some $m \in \IN \smallsetminus p \IN$ such that $m \xi = \xi_0 + p^h \xi'$ for some $\xi_0$ whose class lies in $\NS(\sX_\eta)_s$, $h \in \IN$ and a power-primitive $\xi'$.  
    \end{enumerate}
\end{lemma}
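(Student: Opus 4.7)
The plan is to address the three parts sequentially, with (a) feeding into both (b) and (c).

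For part (a), I would invoke assumption (vi) directly: it provides an isomorphism of $\sO_\sS$-modules $\NS(\sX_\eta) \tensor_\IZ \sO_\sS \iso \sH^0_\dR$ induced by the relative de Rham Chern class (after fixing a lift of $\NS(\sX_\eta)$ to $\Pic(\sX_\eta)$ as in (vi)). Restricting to $s$ yields $\NS(\sX_\eta) \tensor_\IZ k \iso \sH^0_s$. Combined with the isomorphism $\NS(\sX_\eta) \tensor k \iso \NS(\sX_\eta)_s \tensor k$ (valid because the specialization map $\NS(\sX_\eta) \to \NS(\sX_s)$ is injective after inverting $p$ and by construction surjects onto $\NS(\sX_\eta)_s$), and using functoriality of $c_{1,\dR}$ under specialization, this gives (a).

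For part (b), the image lying in $\sH^2_s$ follows from compatibility of $c_{1,\dR}$ with the cup-product pairing. Being power-primitive forces $\xi \perp \NS(\sX_\eta)_s \tensor \IZ_{(p)}$, so by (a), $c_{1,\dR}(\xi) \perp \sH^0_s$; and since by (vi), $\sH^0$ is an orthogonal direct summand of $\sH$ (hence $\sH^2 = (\sH^0)^\perp$ and the restriction of the pairing to $\sH^0$ is non-degenerate), we get $c_{1,\dR}(\xi) \in \sH^2_s$. For non-vanishing, I reduce to crystalline cohomology: $c_{1,\dR}(\xi)$ equals $c_{1,\cris}(\xi) \bmod p$, and by \ref{lem: Galois descent} the map $\NS(\sX_s) \tensor \IZ_p \hookrightarrow \H^2_\cris(\sX_s/W)^{F=p}$ has torsion-free cokernel; the further inclusion $\H^2_\cris(\sX_s/W)^{F=p} \hookrightarrow \H^2_\cris(\sX_s/W)$ is itself $p$-saturated, since $p\alpha \in \H^2_\cris(\sX_s/W)^{F=p}$ forces $pF(\alpha) = p^2 \alpha$ and hence $F(\alpha) = p\alpha$ by $p$-torsion-freeness. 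Composing these two saturations produces an injection $\NS(\sX_s)/p \hookrightarrow \sH_s$, so $\xi \notin p \cdot \NS(\sX_s)$ gives $c_{1,\dR}(\xi) \neq 0$.

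Part (c) reduces to linear algebra over $\IZ_{(p)}$ using (\ref{eqn: decomp over Zp}). Writing $\xi \tensor 1 = a + b$ with $a \in \NS(\sX_\eta)_s \tensor \IZ_{(p)}$ and $b \in [\NS(\sX_\eta)_s \tensor \IZ_{(p)}]^\perp$, I clear denominators prime to $p$ to obtain $m \in \IN \setminus p\IN$ with $m a \in \NS(\sX_\eta)_s$ and $m b \in [\NS(\sX_\eta)_s]^\perp \cap \NS(\sX_s)$. Setting $\xi_0 := ma$ and using $p$-torsion-freeness of $\NS(\sX_s)$ (from \ref{lem: Galois descent}) to factor $mb = p^h \xi'$ with $\xi' \notin p \cdot \NS(\sX_s)$ produces a power-primitive $\xi'$ (orthogonality to $\NS(\sX_\eta)_s \tensor \IZ_{(p)}$ survives division by $p^h$); the edge case $b = 0$ reduces the decomposition to $m\xi = \xi_0$. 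The main technical input throughout is the $p$-saturation of $\NS$ inside crystalline cohomology appearing in (b); once supplied by \ref{lem: Galois descent} and the elementary $F$-invariance argument, everything else is routine bookkeeping.
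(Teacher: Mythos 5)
Your proof is correct and follows essentially the same route as the paper: (a) via assumption (vi) and functoriality of $c_{1,\dR}$, (b) via orthogonality to $\sH^0_s$ plus injectivity of $\NS(\sX_s)\otimes\IF_p \to \H^2_\dR(\sX_s/k)$, and (c) by unwinding (\ref{eqn: decomp over Zp}). The only difference is that where the paper cites this mod-$p$ injectivity from Deligne/Illusie/Ogus (using torsion-freeness of crystalline cohomology from admissibility), you reprove it by the $p$-saturation argument inside $\H^2_\cris(\sX_s/W)^{F=p}$ — which is exactly how the cited results go.
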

\begin{proof}
    (a) follows from the definition of $\sH^0$ and the commutativity of taking de Rham Chern class with pullback. (b) As we assumed that $\sX \to \sS$ is admissible, the Hodge-de Rham spectral sequence of $\sX_s$ degenerates at $E_1$-page and $\H^*_\cris(\sX_s/W(k))$ is torsion-free. Then by \cite[Rmk~3.5]{Del02} (see also \cite[Cor.~5.18]{Illusie} and \cite[Cor.~1.4]{OgusCrystals}), the  map $c_{1, \dR} : \NS(\sX_s) \tensor \IF_p \to \H^2_\dR(\sX_s/k)$ is injective. This implies that $c_{1, \dR}(\xi)$ is nonzero. Moreover, from (a) it is clear that $c_{1, \dR}(\xi)$ is orthogonal to $\sH^0_s$ and hence lies in $\sH^2_s$. (c) is a direct consequence of (\ref{eqn: decomp over Zp}). 
\end{proof}

We set $\che{L}$ to be the $R$-scheme parameterizing flags on $R^{m}$ whose graded pieces are locally free and are of the same rank as those of $\IV^2$. Let $s$ be a point on $\sS$ valued over a field $k = k(s)$ and let $\Delta^r_s := \Spec(\sO_{\sS_k, s}/\fm^{r + 1})$, where $\fm$ is the maximal ideal of the local ring $\sO_{\sS_k, s}$. Then for a chosen basis $b^\bullet$ of $\sH^2_s$, we have a $r$-limp map $\psi_{b^\bullet} : \Delta^r_s \to \check{L}_k$ for any $r > 1$ such that $r!$ is invertible in $k$ (see \ref{rlimpdef}). 

Below we shall relate this $r$-limp map to the Kodaira-Spencer map. 

\subsubsection{} The map $T_s \sS_{k} \to \Hom(\H^1(\Omega_{\sX_s}), \H^2(\sO_{\sX_s}))$ that we call the Kodaira-Spencer map is given by the composition
\begin{equation}
\label{ksmap1}
T_s \sS_{k} \to \H^1(T_{\sX_s}) \to \Hom(\H^1(\Omega_{\sX_s}), \H^2(\sO_{\sX_s}))
\end{equation} 
where the first map exists because $H^1(T_{X_s})$ classifies deformations of $\sX_s$ over the dual numbers $k[\varepsilon]/(\varepsilon^2)$ and the second map is given by the cup product pairing. In fact, using cup product again we can extend to a map 
\begin{equation}
\label{ksmap}
T_s \sS_{k} \to \H^1(T_{\sX_s}) \to \Hom(\H^1(\Omega_{\sX_s}), \H^2(\sO_{\sX_s})) \oplus \Hom(\H^0(\Omega_{\sX_s}^2), \H^1(\Omega^1_{\sX_s})).
\end{equation} 

We now relate the above notion of Kodaira-Spencer mapping to its Hodge-theoretic variant. Given a vector space $V$ over a field and a flag variety $\sF$ parameterizing two-step flags $F^{\bullet} = \{ V = F^0 V \supseteq F^1 V \supseteq F^2 V \}$ on $V$, its tangent space $T_{F^\bullet} \sF$ at $F^\bullet$ is naturally identified with 
\[ (\Hom(F^1 / F^2, F^0/F^1) \oplus \Hom(F^2, F^1/F^2)) \oplus \Hom(F^2, F^0/F^1) \]
(c.f. \cite[Chap II]{zbMATH06031035}). When $V$ is taken to be $\sH_s = \H^2_\dR(\sX_s/k)$, and $F^\bullet$ is taken to be the Hodge filtration $\Fil^\bullet$, the variation of Hodge filtrations on $\sH$ at $s$ gives us a map $T_s \sS_{k} \to T_{F^\bullet} \sF$, which is also oftentimes called the Kodaira-Spencer map. Griffiths transversality guarantees that this map factors through the first two summands, so we obtain a map 
\begin{equation}
\label{grifftransversedecomp}
T_s \sS_{k} \to \Hom(\Fil^1 \sH_s / \Fil^2 \sH_s, \sH_s / \Fil^1 \sH_s) \oplus \Hom(\Fil^2 \sH_s, \Fil^1 \sH_s / \Fil^2 \sH_s).
\end{equation}
In fact, this agrees with (\ref{ksmap}) via the natural identifications $\Fil^2 \sH_s = \H^0(\Omega^2_{\sX_s})$, $\Fil^1 \sH_s / \Fil^2 \sH_s = \H^1(\Omega^1_{\sX_s})$, and $\sH_s / \Fil^1 \sH_s = \H^2(\sO_{\sX_s})$ provided by the degeneration of Hodge-de Rham spectral sequence of $\sX_s$ at $E_1$ page. Deligne explained this in \cite[(2.3.12)]{Del02} in the context of deforming K3 surfaces but the argument easily generalizes. Moreover, since the pairing on $\sH$ induced by the polarization $\bxi_0$ is horizontal, and under this pairing $\H^1(\Omega_{\sX_s}^1)$ is self-dual and $\H^2(\sO_{\sX_s})$ is dual to $\H^0(\Omega_{\sX_s}^2)$, the two factors $T_s \sS_{k} \to \Hom(\H^1(\Omega_{\sX_s}), \H^2(\sO_{\sX_s}))$ and $T_s \sS_{k} \to \Hom(\H^0(\Omega_{\sX_s}^2), \H^1(\Omega^1_{\sX_s}))$ of (\ref{ksmap}) are dual to each other up to a sign (cf. \cite[(2.3.14)]{Del02}). Therefore, (\ref{ksmap}) is remembered by (\ref{ksmap1}). 

Suppose now we take $V$ to be $\sH^2_s$ and $F^\bullet$ to be the filtration on $\sH^2_s$, which is obtained by restricting that on $\sH_s$. Since the orthogonal complement $\sH^0$ of $\sH^2$ in $\sH$ lies completely in $\Fil^1 \sH$, $\Fil^2 \sH^2 = \Fil^2 \sH$ and $\sH^2 / \Fil^1 \sH^2 = \sH / \Fil^1 \sH$. Moreover, as $\sH^0 = \NS(\sX_\eta) \tensor_\IZ \sO_\sS$ is generated by horizontal global sections, one easily checks that the map (\ref{grifftransversedecomp}) factors through 
\begin{equation}
\label{grifftransversedecomp2}
T_s \sS_{k(s)} \to \Hom(\Fil^1 \sH^2_s / \Fil^2 \sH^2_s, \sH^2_s / \Fil^1 \sH^2_s) \oplus \Hom(\Fil^2 \sH^2_s, \Fil^1 \sH^2_s / \Fil^2 \sH^2_s) \subseteq T_{F^\bullet} \sF, 
\end{equation}
where the codomain of the above is naturally viewed as a submodule of the codomain of (\ref{grifftransversedecomp}). The chosen basis $b^\bullet$ of $\sH^2_s$ gives us an identification $\sF \stackrel{b^\bullet}{\simeq} \che{L}_k$, and there is a commutative diagram
\begin{equation}
    \label{eqn: diag for KS}
    \begin{tikzcd}
	{T_s \sS_{k}} & {T_{F^\bullet} \sF} \\
	{T_s \Delta^r_s} & {T_{\psi^\bullet(s)} \che{L}_k}
	\arrow["(\ref{grifftransversedecomp2})", from=1-1, to=1-2]
	\arrow[equal, from=1-1, to=2-1]
	\arrow["{\stackrel{b^\bullet}{\simeq}}", from=1-2, to=2-2]
	\arrow["T_s \psi_{b^\bullet}", from=2-1, to=2-2]
\end{tikzcd}.
\end{equation}

\subsection{Construction of the Atypical Locus}
Consider the set-up in \S\ref{sec: general setup}. We apply the constructions in \S\ref{sec: period jets} to the filtered flat vector bundle $\sH^2$ considered in \S\ref{sec: general setup}. Let $d = \dim_{R} \sS - (h^{0, 2}(\IV^2) - 1)$. Write $\sP^{d}_{r,nd} \subset \sP^{d}_{r}$ for the fibre of $\sP^{d}_{r} \to J^{d}_{r} \sS$ above $J^{d}_{r,nd} \sS$. Let $\che{L}_{v} \subset \che{L}$ denote the subscheme of flags whose $\Fil^1$ part contains the first vector $v$ in the standard basis of $R^m$.

Let $\Phi^d_r$ be the graph of the map $\alpha^d_r$ in \ref{bigjetthm} corresponding to $(\sH^2, \Fil^{\bullet}, \nabla)$. Define the intersections of schemes over $R[1/r!]$
\begin{align*}
\sI_{0} &:= \Phi^{d}_{0} \cap \left( \sP^d_r \times \che{L}_{v} \right) = \Phi^{d}_{0} \cap \left( \sP^{d}_{0} \times \che{L}_{v} \right) \text{ for $r = 0$} \\
\sI_{r} &:= \Phi^{d}_{r} \cap \left( \sP^{d}_{r,nd} \times (J^{d}_{r} \che{L}_{v} \setminus (\pi^{r}_{1})^{-1}(J^{d}_{1,c} \che{L}_{v})) \right) \text{ for $r > 0$}.
\end{align*}

\begin{remark}
    \label{rmk: unpack definition}
    For readers' convenience, let us unpack the definitions and give a description of $\sI_0$ and $\sI_{r}$ on field-valued points. Let $k$ be a field. Recall the description of $\sP^d_r(k)$ in \ref{sec: define P^d_r}. Then $\sI_0(k)$ corresponds to pairs $(p_0, \sigma_0) \in \sP^{d}_{0}(k) \times \che{L} (k)$ such that $p_0$ is given by a pair $(s, b^\bullet)$ where $s \in S(k)$ and $b^\bullet$ is a frame of $\sH^2_s$, $\psi_{b^\bullet}^{-1}(v) \in \Fil^1 \sH^2_s$, and the flag $\sigma_0$ is given by the filtration on $\sH^2_s$ via the choice of frame $b^\bullet$. An element of $\sI_r(k)$ is given by a pair $(p_r, \sigma_r) \in \sP^d_r(k) \times J^d_r \che{L}(k)$, such that $p_r$ is given by a triple $(s, b^\bullet, j_r : \ID^d_{r, k} \to \sS_k)$ where 
\begin{enumerate}[label=\upshape{(\roman*)}]
    \item $j_r$ is injective on tangent spaces, 
    \item the composition $\ID^d_{r, k} \stackrel{j_r}{\to} \Delta^r_s \stackrel{\psi_{b^\bullet}}{\to} \che{L}_k$ is nonconstant when restricted to $\ID^d_{1, k}$, 
    \item $(\psi_{b^\bullet} \circ j_r)^{-1} (v)$, as a global section of $j_r^* \sH^2$ over $\ID^d_{r, k}$, lies in the $\Fil^1$ part, 
\end{enumerate}
and $\sigma_r = \alpha^d_r(p_r) = \psi_{b^\bullet} \circ j_r$. For $r \ge r'$, there is a natural projection $\sI_{r} \to \sI_{r'}$ such that on $k$-points it is given by sending $(p_r := (s, b^\bullet, j_r), \alpha_{r}^d(p_r))$ to $(p_{r'} := (s, b^\bullet, j_{r'}), \alpha^d_{r'}(p_{r'}))$, where $j_{r'} = \pi^r_{r'}(j_r)$. 
\end{remark}

\begin{remark}
\label{rmk: I_r commutes with base change}
    Thanks to the compatibility-with-base-change statements in \ref{bigjetthm}, the formation of the tuple of $\sS$-schemes $\mathcal{T} := (\Phi^d_r, \sI_0, \sI_r)$ commutes with extension of scalars within $\IC$: If $R' \subseteq \IC$ is a sub-$R$-algebra, then the tuple $\mathcal{T}'$ of $\sS \tensor_R R'$-schemes formed by applying the above constructions to the family $(f : \sX \to \sS) \tensor_R R'$ is naturally identified with $\mathcal{T} \tensor_R R'$ in the obvious sense.  
\end{remark}

For later use, let us make some elementary observations on the topology of complex varieties. 
\begin{lemma}
\label{lem: infinite intersection}
Let $S$ be a $\IC$-variety and let $J_0 \supseteq J_1 \supseteq \cdots$ be an infinite sequence of descending constructible subsets of $S(\IC)$. Use $\cl(-)$ to denote the closure in the Zariski topology of $S(\IC)$.\footnote{We sometimes use implicitly the fact that for a constructible subset of $\IC$-points on a $\IC$-variety, analytic closure and Zariski closure coincide.} Then 
\begin{equation}
    \cl(\cap_{r \ge 0} J_r) = \cap_{r \ge 0} \cl(J_r)
\end{equation}
\end{lemma}
\begin{proof}
 Note that since $S(\IC)$ equipped with the Zariski topology is a Noetherian topological space, the infinite sequence of closed subsets $\cl(J_0) \supseteq \cl(J_1) \supseteq \cdots$ has to stabilize, i.e., for some $r_0$, $\cl(J_r) = \cl(J_{r_0})$ for all $r \ge r_0$, so that the right hand side is just $\cl(J_{r_0})$. 
 It is tautological that the left hand side is contained in the right hand side, so we only need to show the converse. It suffices to show that for every irreducible component $C$ of $\cl(J_{r_0})$, the intersection $(\cap_{r \ge 0} J_r) \cap C = \cap_{r \ge 0} (J_r \cap C)$ is Zariski dense in $C$. 
 
 Suppose now that this is not true. We divide into two cases. First, assume that for every $r$, $J_r \cap C$ contains a Zariski open subset $U_r$ of $C$. Then by the Baire category theorem (\cite[\href{https://stacks.math.columbia.edu/tag/0CQN}{Tag 0CQN}]{stacks-project}), the infinite intersection of opens $\cap_{r \ge 0} U_r$ is still analytically (and hence Zariski) dense in $C$, so we are done. Otherwise, by increasing $r_0$ if needed, $J_{r_0} \cap C$ does not contain an open subset of $C$. As $J_{r_0} \cap C$ is constructible, this means that it is contained in a strict (i.e., proper) closed subset $Z$ of $C$, but this contradicts the assumption that $C$ is a component of $\mathrm{cl}(J_{r_0})$. 
\end{proof}







\begin{corollary}
\label{cor: project infinite intersection}
    Let $f : S \to T$ be a morphism between $\IC$-varieties. Let $J_0 \supseteq J_1 \supseteq \cdots$ be a descending sequence of constructible subsets of $S(\IC)$. Then 
    \begin{equation}
        \cl(f(\cap_{r \ge 0} J_r)) = \cap_{r \ge 0} \cl(f(J_r))
    \end{equation}
\end{corollary}
\begin{proof}
    Let us start by recalling a tautology: By continuity, for every subset $A \subseteq S(\IC)$, we have $f(\cl(A)) \subseteq \cl(f(A))$, or equivalently $\cl(f(\cl(A))) = \cl(f(A))$. Now we note that for $r_0 \gg 0$, 
    \begin{align*}
        \cl(f (\cap_{r \ge 0} J_r) ) &\supseteq f(\cl(\cap_{r \ge 0} J_r)) \tag{\textrm{by continuity of $f$}} \\
        &= f(\cap_{r \ge 0} \cl(J_r)) \tag{\textrm{by \cref{lem: infinite intersection}}} \\
        &=  f(\cl(J_{r_0})) \tag{\textrm{by Noetherianity of $S$}} 
    \end{align*}
    As $\cl(f (\cap_{r \ge 0} J_r))$ is closed, we moreover have $\cl(f (\cap_{r \ge 0} J_r)) \supseteq \cl(f(\cl(J_{r_0}))) = \cl(f(J_{r_0}))$. On the other hand, by Noetherianity of $T(\IC)$, we have $\cap_{r \ge 0} \cl(f(J_r)) =  \cl(f(J_{r_0}))$ for $r_0 \gg 0$. Hence we obtain $\cl(f (\cap_{r \ge 0} J_r)) \supseteq \cap_{r \ge 0} \cl(f(J_r))$. The reverse inclusion is tautological: for every $r'$, $\cl(f(\cap_{r \ge 0} J_r)) \subseteq \cl(f(J_{r'}))$ and hence $\cl(f(\cap_{r \ge 0} J_r)) \subseteq \cap_{r'} \cl(f(J_{r'}))$. 
\end{proof}

Now we are ready to prove the following, which is a sharpening of the more abstract statement \cite[Prop. 5.8]{urbanik2022algebraic} to the situation which will be of interest to us.

\begin{proposition}
\label{Econstrprop}
There exists $r_{0} \in \IN_{> 0}$ such that if $E_{r_0} \subset \sS$ is the scheme-theoretic image of $\sI_{r_{0}} \to \sS$, then every irreducible component $E_{0} \subset E_{r_0, \IC}$ has dimension at least $d$ and the algebraic monodromy group of $\restr{\IV}{E_{0}}$ is properly contained in that of $\IV$.
\end{proposition}

The proof requires two notions from Hodge theory, which we now recall.

\begin{definition}
Given a variation of Hodge structure $\IV$ on a smooth complex algebraic variety $S$, we say an irreducible complex algebraic subvariety $Z \subset S$ is weakly special if it is maximal for its algebraic monodromy group. We say it is special if it is maximal for its Mumford-Tate group (the Mumford-Tate group of $\IV_s$ for a very general point $s \in Z(\IC)$).
\end{definition}

\begin{remark}
Special subvarieties correspond to Hodge loci for variations of Hodge structures in the tensor category generated by $\IV$. Since they are defined by rational tensors, there are countably many. On the other hand there are in general uncountably many weakly special subvarieties; for instance, if $\IV$ induces a quasi-finite period map, for every point $s \in S(\IC)$ the variety $\{ s \}$ is weakly special.
\end{remark}

We will also pass back and forth between complex analytic objects and formal objects, so we make the following convention: let $Y$ be a complex analytic space and $y \in Y$ be a point. By the formal completion of $Y$ at $y$ we mean the formal scheme $\Spf(\hat{\sO}^{\mathrm{an}}_{Y, y})$, where $\hat{\sO}^{\mathrm{an}}_{Y, y}$ is the formal completion of the germ of analytic functions on $Y$ at $y$. We shall denote it by $\hat{Y}_y$. A morphism $g : Y \to Y'$ between complex analytic spaces induces a morphism $\hat{Y}_y \to \hat{Y}'_{g(y)}$ between formal schemes.

\begin{proof}
If $d \leq 0$ then $\mathcal{I}_{r}$ is empty and there is nothing to show, so we may assume $d > 0$. \mpr{Let $E_{r}$ be the scheme-theoretic image of $\sI_{r} \to \sS$ and set $E = \cap_{r} E_{r}$. Note that the $E_{r}$ are closed subschemes and $\sS$ is Noetherian, which guarantees that $E = E_{r_0}$ for $r_0 \gg 0$. It therefore suffices to prove the proposition for $E$. }

\vspace{0.5em}

\noindent \textbf{The case $R = \IC$:} We first prove the statement when $R = \IC$. \mpr{Let $\sJ_r$ be the set-theoretic image of $\sI_r(\IC) \to (\sP \times \che{L}_{v})(\IC)$ and set $\sJ = \cap_{r \ge 0} \sJ_r$. Let $\pr_\sS$ denote the projection $\sP \times \che{L}_{v} \to \sS$. As the map $\sI_r \to \sS$ is nothing but the composition of $\sI_r \to \sP \times \che{L}_{v}$ with $\pr_\sS$, the Zariski closure of each $\pr_\sS(\sJ_r)\subseteq \sS(\IC)$ is just $E_{r}(\IC)$. As $\{ \sJ_r \}$ forms a descending chain of constructible subsets of $(\sP \times \che{L}_{v})(\IC)$, by \ref{cor: project infinite intersection} we have 
\begin{equation}
\label{eqn: prS J dense in E}
    \cl(\pr_{\sS}(\sJ)) = \cl(\pr_{\sS}(\cap_{r \ge 0} \sJ_r)) = \cap_{r \ge 0} \cl(\pr_\sS(\sJ_r)) = \cap_{r \ge 0} E_r(\IC) = E(\IC).  
\end{equation} In particular, $\pr_\sS(\sJ)$ is Zariski dense in $E$, and analytically dense in $E(\IC)$.}

Let $(p, \sigma)$ be a $\IC$-point in $\sJ$. A priori being in $\sJ$ means that there exists a sequence of pairs $\{ (p_{r}, \sigma_{r}) \in \sI_r \}_{r \geq 0}$ such that $(p, \sigma)$ is the image of $(p_{r}, \sigma_{r})$ under the natural projection $\sP^d_r \times J^d_r \che{L}_{v} \to \sP \times \che{L}_{v}$. However, applying \cite[Lem.~5.3]{urbanik2021transcendence}\footnote{Note that the letter ``$E$'' in \textit{loc. cit.} differs from our $E$ in the present context.} to the sequence $\{ \sI_{r} \}_{r \geq 0}$ at the point $(p, \sigma)$, we can arrange the sequence $\{ (p_{r}, \sigma_{r}) \}_{r \geq 0}$ so that for each $r > 0$, $(p_{r - 1}, \sigma_{r - 1})$ is the image of $(p_{r}, \sigma_{r})$ under the natural projection $\sP^d_r \times J^d_r \che{L}_{v} \to \sP^d_{r - 1} \times J^d_{r - 1} \che{L}_{v}$. Let $\ID^d_{\infty, \IC} := \varinjlim_r \ID^d_r$, which is (abstractly) isomorphic to $\Spf(\IC[\![x_1, \cdots, x_d]\!])$. The sequence $\{ (p_{r}, \sigma_{r}) \}_{r \geq 0}$ then defines formal objects $\sigma_{\infty} : \ID^{d}_{r,\IC} \to \che{L}_{v,\IC}$, $\psi_{\infty} : \hat{\sS}_{s} \to \che{L}_{\IC}$ and $j_{\infty} : \ID^{d}_{\infty,\IC} \to \hat{\sS}_{s}$ satisfying $\sigma_{\infty} = \psi_{\infty} \circ j_{\infty}$. By \ref{rmk: unpack definition}, $j_\infty$ is injective on tangent spaces, which implies that the map $j_{\infty}$ is a closed embedding, and $\sigma_{\infty}$ is non-constant.


Write $(\hat{\sH}^2_{s}, \hat{\Fil}^{\bullet}_{s}, \hat{\nabla}_{s})$ for the restriction of the data $(\sH^2, \Fil^{\bullet}, \nabla)$ to $\hat{\sS}_{s}$. Let $b^{1}, \hdots, b^{m} \in \sH_s$ be vectors in $b^\bullet$, which we shall extend to flat sections which trivialize $\hat{\sH}^2_{s}$. Choose $A \in \mathrm{GL}_m(\IC)$ so that the $A$-translated basis $\{ b^i_A := A \cdot b^i \}$ is identified under the Betti-de Rham comparison with an integral basis for $\IV^2_s$, and hence $\psi_{A} := A \cdot \psi_{\infty}$ is the completion of an honest Hodge-theoretic local period map (to be explained in more detail in the paragraph below). Transfer the polarization on $\mathbb{V}^2_s$ along the isomorphism $i_{s} : \IV^{2}_{s} \cong \IQ^{m}$ induced by $b^{1}_{A}, \hdots, b^{m}_{A}$ to obtain a polarization $Q : \IQ^{m} \otimes \IQ^{m} \to \IQ$ and define $\che{D} \subset \che{L}$ to be the subvariety consisting of those Hodge flags satisfying the first Hodge-Riemann bilinear relation. Then the local period map $\psi_A$ factors through $\che{D}$, and by the large monodromy assumption and elementary linear algebra (c.f. \cite[Ch.2, pg.48]{zbMATH06031035}) $\che{D}$ is exactly the orbit of the algebraic monodromy group of $\IV$ when regarded as a subgroup of $\GL_{m}(\IC)$ via $i_{s}$. Note here that $\che{L}$ and $\che{D}$ are flag varieties parameterizing flags with the same Hodge numbers as $\IV^{2}$ rather than $\IV$, otherwise we would have to include conditions coming from global flat sections as well. Recall however that $\IV$ and $\IV^2$ have the same algebraic monodromy group.

We now relate our construction to a Hodge-theoretic situation so that we may apply the Ax-Schanuel Theorem \cite{zbMATH07066495} of Bakker-Tsimerman. Below we shall view $\sS$ as a complex analytic space, and let $B \subseteq \sS$ be a sufficiently small complex analytic open ball containing $s$. The basis $b^i_A$ of $\sH_s$ defines a trivialization of the restriction of (the analytification of) $\sH$ over $B$, which induces a period morphism $\psi^{\mathrm{an}}_A : B \to \che{L}_\IC$ in the category of the complex-analytic spaces. When we identify the formal completion of $B$ at $s$ with $\hat{\sS}_s$, $\psi_A$ is nothing but the completion of $\psi^{\mathrm{an}}_A$ at $s$.




Define $\che{D}_{v} = \che{D} \cap (A \cdot \che{L}_{v})$. Then if we consider a component $U$ of the intersection $(\sS \times \che{D}_{v}) \cap \Gamma_{A}$ inside $\sS \times \che{D}$, where $\Gamma_{A}$ denotes the graph of $\psi^{\mathrm{an}}_{A}$, the Ax-Schanuel Theorem \cite{zbMATH07066495} tells us one of the following two situations happens:
\begin{itemize}
\item[(a)] we either have that
\begin{align*}
 \codim_{\sS \times \che{D}} \, U &\geq \codim_{\sS \times \che{D}} (\sS \times \che{D}_{v}) + \codim_{\sS \times \che{D}} \, \Gamma_{A}
 \end{align*}
 or equivalently that
\begin{align*}
\codim_{\sS} \, \textrm{pr}_{\sS}(U) &\geq \codim_{\che{D}} \che{D}_{v},
\end{align*}
where $\pr_\sS(U)$ is the projection of $U$ to $\sS$;
or
\item[(b)] $\pr_\sS(U)$ lies inside a strict (i.e., properly contained in $\sS$) weakly special subvariety $Z$ of $\sS$ (c.f. \cite[Cor. 4.14]{zbMATH07387423} for an equivalence with the notion of weakly special used in \cite{zbMATH07066495}).
\end{itemize}
Note that the projection $\Gamma_A \to B$ is an isomorphism, so that $U$ maps isomorphically onto $\pr_\sS(U)$, which is in particular an analytic subset of $B$.

We may choose $U$ so that $j_{\infty}$ factors through $\hat{\pr}_\sS(U)$, the formal completion of $\pr_\sS(U)$ at $s$. Now by Lemma \ref{h20estimatelem} proven below, one has $\codim_{\che{D}} \che{D}_{v} = h^{2,0}$, and because $j_{\infty}$ embeds a formal disk of dimension $d = \dim_{\IC} \sS - h^{2,0} + 1$ into $\hat{\pr}_{\sS}(U)$ we must necessarily be in case (b). Moreover, since $\psi_{A} \circ j_{\infty}$ is non-constant by construction, $\psi_{A}$ is non-constant along the formal completion of $Z$ at $s$.

This implies that up to shrinking $B$, $\psi_{A}^{\mathrm{an}}$ is non-constant on $B \cap Z^{\mathrm{sm}}$, where $Z^{\textrm{sm}}$ is the smooth locus $Z$. It follows that $Z^{\textrm{sm}}$ admits a non-constant period map. We note also that each point in $\textrm{pr}_{\sS}(U)^{\textrm{sm}} \subset (\psi_{A}^{\mathrm{an}})^{-1}(\che{D}_{v})$ lies in the image of $\mathcal{J}$ by construction: the locus $\textrm{pr}_{\sS}(U)$ is closed and irreducible in $B$ of dimension $\geq d$, so for each $r \ge 1$ we may apply \ref{rmk: unpack definition} with the $r$-limps associated to $\restr{\psi_{A}^{\mathrm{an}}}{B}$ and jets $j : \mathbb{D}^d_{r,\mathbb{C}} \to \textrm{pr}_{\sS}(U)^{\textrm{sm}}$.\footnote{Technically, we are mapping $\mathbb{D}^d_{r,\mathbb{C}}$ into the formal completions of $\textrm{pr}_{\sS}(U)^{\textrm{sm}}$ at points in it.} This implies that $\textrm{pr}_{\sS}(U)$ is contained in $E \cap Z$, so that $\dim E \cap Z \ge d$. Thus we have shown: 

\begin{quote}
\textbf{Claim:} Any point $(p, \sigma) \in \mathcal{J}$ projects into a strict weakly special subvariety $Z \subset \sS$ such that $\dim_{s} (Z \cap E) \geq d$, where $p$ lies above $s \in \sS$ ($\dim_s$ denotes the dimension at $s$). 
\end{quote}

We now show the statement of the Proposition. It is a consequence of the Theorem of the Fixed part that the period map on $Z^{\textrm{sm}}$ being non-constant implies the algebraic monodromy group of $\restr{\IV}{Z^{\textrm{sm}}}$, which agrees with the algebraic monodromy group of $\restr{\IV}{Z}$, is non-trivial. It now follows from the $\IQ$-simplicity of the monodromy on $\sS$ that every such $Z$ is contained in a strict weakly non-factor weakly special subvariety $Z'$ in the sense of \cite[Def. 1.13]{FieldsDefHgLoci}. Because the monodromy group is $\mathbb{Q}$-simple, by \cite[Lem 2.5]{FieldsDefHgLoci} $Z'$ is special. Since there are countably many special subvarieties, we conclude there is a countable collection $\{ Z_{i} \}_{i \in I}$ of strict closed subvarieties $Z_{i} \subset \sS$ such that any strict weakly special subvariety with non-trivial algebraic monodromy group lies inside one of the $Z_{i}$. In particular, $\textrm{pr}_{\sS}(\mathcal{J}) \subset \cup_{i \in I} Z_{i}$ as a consequence of the claim.

\mpr{Now we argue that $\cl(\pr_\sS (\sJ)) = E(\IC)$ in fact lies in finitely many $Z_i$'s. For each $i$, set $\wt{Z}_i = \pr_\sS^{-1}(Z_i)$. Then $\sJ$ is contained in the union $\cup_i \wt{Z}_i$. It suffices to argue that $\sJ$ is contained in $\cup_{i \in I'} \wt{Z}_{i}$ for some finite $I' \subseteq I$, because then $\cup_{i \in I'} Z_i$ is closed and contains $\pr_\sS(\sJ)$, and hence its Zariski closure $\cl(\pr_\sS(\sJ))$. Now, let $C$ be an irreducible component of $\cl(\sJ)$. We reduce to showing that $C(\IC)$ is contained in one of the $\wt{Z}_i$'s. As $\sJ$ is an infinite intersection of constructible subsets of $(\sP \times \che{L}_v)(\IC)$, the complement of $(\sJ \cap C)(\IC)$ in $C(\IC)$ is contained in a countable union $\cup_{j \in J} T_j$ of closed subvarieties. Moreover, note that each $T_j$ may be taken to be a \textit{strict} closed subvariety because $(\sJ \cap C)(\IC)$ is dense in $C(\IC)$. Now we observe that $C(\IC)$ is contained in the countable union of closed subsets $(\cup_{i \in I} (\wt{Z}_i \cap C)(\IC)) \cup (\cup_{j \in J} T_j(\IC))$. By the Baire category theorem \cite[\href{https://stacks.math.columbia.edu/tag/0CQN}{Tag 0CQN}]{stacks-project}, not all of these closed subvarieties can be strictly contained in $C(\IC)$. This implies that one of them, which has to be one of the $(\wt{Z}_i \cap C)(\IC)$'s, coincides with $C(\IC)$. This proves the proposition for $R = \IC$.  }

\vspace{0.5em}


\noindent \textbf{General $R$:} Returning now to our original problem, we choose $r_{0}$ such that $E = E_{r_{0}}$. By \ref{rmk: I_r commutes with base change} and the fact that scheme-theoretic image commutes with flat base change \cite[Tag~081I]{stacks-project}, the base change $E_\IC$ of $E$ to $\IC$ can be viewed as the ``$E$'' we discussed above in the $R = \IC$ case. The result follows. \end{proof}

\begin{lemma}
\label{h20estimatelem}
Suppose that $\IZ^m$ is equipped with a pure Hodge structure of weight $2$ and a polarization $Q$. Let $\overline{h} = (h^{2,0}, h^{1,1}, h^{0,2})$ be the Hodge numbers and $\che{D}$ be the variety of all flags on $\IC^{m}$ with Hodge numbers $\overline{h}$ satisfying the first Hodge-Riemann bilinear relation, and let $v \in \IC^{m}$ be any vector for which there exists $F^{\bullet} \in \che{D}(\IC)$ such that $v \in F^{1}$. Then the locus
\[ \che{D}_{v} = \{ F^{\bullet} \in \che{D}(\IC) : v \in F^{1} \} \]
has codimension $h^{2,0}$ in $\che{D}$.
\end{lemma}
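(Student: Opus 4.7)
My plan is to compute tangent spaces: first present $\che{D}$ via the identification $F^2 = (F^1)^\perp$ (which comes from the first Hodge-Riemann relation plus a dimension count), then compute $T_{F^\bullet} \che{D}$ using the Hodge-Riemann decomposition, and finally observe that the condition $v \in F^1$ is cut out infinitesimally by $h^{0,2} = h^{2,0}$ linear equations which will be independent at a suitable point of $\che{D}_v$. The hardest step will be deriving the antisymmetry constraint in $T_{F^\bullet} \che{D}$ and producing the correct ``generic'' point in the isotropic case for $v$.

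\textbf{Setup and tangent space of $\che{D}$.} The first Hodge-Riemann relation gives $Q(F^1, F^2) = 0$, i.e.\ $F^2 \subseteq (F^1)^\perp$, and since $\dim (F^1)^\perp = m - (h^{2,0} + h^{1,1}) = h^{0,2} = h^{2,0} = \dim F^2$, the inclusion is an equality. Hence $F^\bullet$ is determined by $F^1$ (equivalently by $F^2$), and $\che{D}$ identifies with the isotropic Grassmannian of $h^{2,0}$-planes in $\IC^m$ via $F^\bullet \mapsto F^2$. At a point $F^\bullet \in \che{D}$, using the Hodge-Riemann decomposition $\IC^m = H^{2,0} \oplus H^{1,1} \oplus H^{0,2}$ with $F^2 = H^{2,0}$ and $F^1 = H^{2,0} \oplus H^{1,1}$, a direct parametrization of nearby coisotropic flags gives
\[
T_{F^\bullet} \che{D} \;=\; \Hom(H^{1,1}, H^{0,2}) \;\oplus\; \wedge^{2}(H^{2,0})^{*},
\]
where the second summand sits inside $\Hom(H^{2,0}, H^{0,2})$ using $Q$ to identify $H^{0,2} \cong (H^{2,0})^{*}$, the antisymmetry arising by differentiating $Q(F^2(\epsilon), F^2(\epsilon)) = 0$. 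A count confirms $\dim \che{D} = \binom{h^{2,0}}{2} + h^{2,0} h^{1,1}$.

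\textbf{Cutting down by $v \in F^1$ and generic independence.} A tangent vector $\phi = (\phi_1, \phi_3)$ encodes a first-order deformation $F^1(\epsilon) = \{ w + \epsilon \tilde\phi(w) : w \in F^1 \}$, and $v \in F^1(\epsilon)$ is equivalent to $\phi(v) = 0$ in $V/F^1 \cong H^{0,2}$. Decomposing $v = v_{2,0} + v_{1,1}$ according to $F^1 = H^{2,0} \oplus H^{1,1}$ (the $H^{0,2}$ component is zero since $v \in F^1$), the equation reads $\phi_1(v_{1,1}) + \phi_3(v_{2,0}) = 0$, an a priori system of $h^{2,0}$ linear conditions; this already gives $\codim_{\che{D}} \che{D}_v \le h^{2,0}$. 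For the reverse bound I will choose $F^\bullet \in \che{D}_v$ with $v \notin F^2$ (equivalently $v_{1,1} \neq 0$): this is automatic if $v$ is not isotropic (an isotropic $F^2$ cannot contain a non-isotropic vector), and if $v$ is isotropic one lifts any isotropic $\overline{F^2} \subset v^\perp / \langle v \rangle$ to a subspace $F^2 \subset v^\perp$ with $v \notin F^2$, which remains isotropic because the preimage in $v^\perp$ of any isotropic subspace of the nondegenerate quotient $v^\perp/\langle v \rangle$ is totally isotropic. At such an $F^\bullet$, the partial map $\phi_1 \mapsto \phi_1(v_{1,1})$ is already surjective onto $H^{0,2}$, so the $h^{2,0}$ equations are linearly independent and the codimension is exactly $h^{2,0}$; the complementary stratum $\{ v \in F^2 \} \subset \che{D}_v$ has strictly higher codimension by the same kind of analysis and therefore does not alter the conclusion.
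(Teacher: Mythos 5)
Your route is genuinely different from the paper's: the paper uses the transitivity of $\mathrm{Aut}(\IZ^m,Q)(\IC)$ on $\che{D}$ to reduce to the case where $v$ is a rational Hodge vector and then cites a known codimension formula for the corresponding Hodge/Schubert locus, whereas you identify $\che{D}$ with an isotropic Grassmannian via $F^2=(F^1)^\perp$ and compute tangent spaces directly. Your description of $T_{F^\bullet}\che{D}$, the dimension count, the fact that $v\in F^1$ is cut out by $h^{0,2}$ local equations (so every component of $\che{D}_v$ has codimension $\le h^{2,0}$), and the surjectivity of $\phi_1\mapsto\phi_1(v_{1,1})$ at any point with $v\notin F^2$ are all correct; at such points the local dimension is therefore exactly $\dim\che{D}-h^{2,0}$. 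This is a legitimate, self-contained alternative in the main case, and it is more explicit than the paper's citation.

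The gap is in your final step, and it is concentrated in the cases $h^{1,1}\le 1$ with $v$ isotropic. First, lifting an isotropic subspace of $v^\perp/\langle v\rangle$ of dimension $h^{2,0}$ to an $F^2\subset v^\perp$ with $v\notin F^2$ produces, after adjoining $v$, a totally isotropic subspace of $\IC^m$ of dimension $h^{2,0}+1$; this exists only when $h^{1,1}\ge 2$. Indeed, when $h^{1,1}=1$ and $v$ is isotropic, \emph{every} $F^\bullet\in\che{D}_v$ satisfies $v\in F^2$ (the image of $F^2$ in the $(2h^{2,0}-1)$-dimensional nondegenerate space $v^\perp/\langle v\rangle$ is isotropic, hence of dimension $\le h^{2,0}-1$), so the stratum you dismiss is all of $\che{D}_v$. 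Second, the assertion that $\{F^\bullet:v\in F^2\}$ has strictly higher codimension is unproved and false in general: that stratum is the isotropic Grassmannian of $(h^{2,0}-1)$-planes in $v^\perp/\langle v\rangle$, of codimension exactly $h^{2,0}-1+h^{1,1}$ in $\che{D}$, which exceeds $h^{2,0}$ only for $h^{1,1}\ge 2$. For $h^{1,1}=1$ it equals $h^{2,0}$, so the lemma still holds there, but by a computation your write-up does not contain. For $h^{1,1}=0$ and $v$ isotropic the codimension is $h^{2,0}-1$ and the statement itself fails (e.g.\ $m=2$, $\overline{h}=(1,0,1)$: $\che{D}$ is two points and $\che{D}_v$ is one of them); any complete argument must therefore either assume $h^{1,1}\ge 1$ or note that in the paper's application $v$ lies in $F^1\cap\overline{F^1}$ of an honest polarized Hodge structure, which forces $h^{1,1}\ge 1$. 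To repair the proof: run your argument for $v$ non-isotropic and for $v$ isotropic with $h^{1,1}\ge 2$, and settle $h^{1,1}=1$ by the explicit codimension count of $\{v\in F^2\}$ above.
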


\begin{proof}
The complex points of the group $G = \textrm{Aut}(\IZ^{m}, Q)$ act transitively on $\che{D}$, so we may reduce to the case where $v \in \IQ^{m}$ is a Hodge vector. The result then follows from \cite[Prop 5.4]{Carlson2009}.
\end{proof}

\subsection{The Power-primitive Case} We continue to work with the family $f: \sX \to \sS$ in \S\ref{sec: general setup}, but with the base ring $R$ set to be $\sO_L[1/N]$ for some number field $L \subseteq \IC$ and $N \in \IN_{> 0}$. We shall assume in addition assume that $\sO_L[1/N]$ is unramified over $\IZ[1/N]$. This implies that if $s \to \sS$ is an alg-geometric point and $k := k(s)$ has characteristic $p > 0$, then the morphism $\sO_{L}[1/N] \to k$ determined by $s$ uniquely lifts to a morphism $\sO_{L}[1/N] \to W(k)$, so that we may form $\sS \tensor_{\sO_{L}[1/N]} W(k)$. Below we shall simply write it as $\sS \tensor W(k)$.

\begin{lemma}
\label{lem: global LB def}
    Let $(s, \xi)$ be a LB-pair for $\sX/\sS$ such that $\mathrm{char\,} k = p > 0$ for $k := k(s)$. Then the following statements are equivalent. 
    \begin{enumerate}[label=\upshape{(\roman*)}]
        \item The class of $\xi$ in $\NS(\sX_s) \tensor \IZ_{(p)}$ lies in $\NS(\sX_\eta)_s \tensor \IZ_{(p)}$ (recall \ref{def: power-primitive}). 
        \item Let $\hat{\sS}_s$ be the formal completion of $\sS \tensor W(k)$ at $s$ and $\Def(\xi) \subseteq \hat{\sS}_s$ be as defined in \ref{thm: Deligne generalized} for the restriction of $\sX$ to $\hat{\sS}_s$. Then $\Def(\xi) = \hat{\sS}_s$. 
        \item $(s, \xi)$ is global on $\sS$ in the sense of \ref{def: localizations}, i.e., $\shP(\sS, s, \xi)$ dominates $\sS$. 
    \end{enumerate}
\end{lemma}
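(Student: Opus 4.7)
My plan is to prove the cyclic chain of implications (i)~$\Rightarrow$~(ii)~$\Rightarrow$~(iii)~$\Rightarrow$~(i). Throughout I will use that $\sS$ has a characteristic-$p$ fibre, which forces $p \nmid N$, and that $\sS$ is irreducible (being smooth and flat over the Dedekind domain $\sO_L[1/N]$ with geometrically connected generic fibre).

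For (i)~$\Rightarrow$~(ii): Since $\NS(\sX_s)$ has no $p$-torsion by \ref{lem: Galois descent}, the inclusion $\NS(\sX_\eta)_s \otimes \IZ_{(p)} \hookrightarrow \NS(\sX_s) \otimes \IZ_{(p)}$ combined with assumption (i) yields an integer $m \in \IN \setminus p\IN$ with $m\xi \in \NS(\sX_\eta)_s$ integrally. I would lift $m\xi$ to a representative in $\Pic(\sX_\eta)$ and extend via \ref{lem: generic LB}(a) to a global line bundle $\tilde{\xi} \in \Pic(\sX)$; the restriction $\tilde{\xi}|_{\sX_s}$ lies in the class $m\xi$ and extends globally, so $\Def(\tilde{\xi}|_{\sX_s}) = \hat{\sS}_s$. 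Combining this with the fact that $\Def(\cdot)$ depends only on the class in $\NS(\sX_s)$ (\ref{thm: Deligne generalized}) and with \ref{lem: Def(xi)}, I conclude $\Def(\xi) = \Def(m\xi) = \hat{\sS}_s$.

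For (ii)~$\Rightarrow$~(iii), which I expect to be the main step: Set $R_s := \Gamma(\hat{\sS}_s, \sO)$, a complete noetherian regular local $W(k)$-algebra. From \ref{thm: Deligne generalized} the morphism $\hat{\shP}^\xi \to \Def(\xi) = \hat{\sS}_s$ is formally smooth, so lifting the identity section through the Artinian quotients $R_s/\mathfrak{m}^n$ produces a formal section $\hat{\sS}_s \to \hat{\shP}^\xi$, or equivalently a compatible system of line bundles on $\sX \times_\sS \Spec R_s/\mathfrak{m}^n$ reducing to $\xi$. Since $\sX \to \sS$ is projective, Grothendieck's formal existence theorem algebraizes this system to a line bundle on $\sX \times_\sS \Spec R_s$, which produces a morphism $\varphi : \Spec R_s \to \shP_{\sX/\sS}$ whose closed point lands at $(s,\xi)^\flat$; since morphisms of schemes send generalizations to generalizations, $\varphi$ factors through the localization $\shP(\sS, s, \xi)$. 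Dominance then reduces to checking that the composition $\Spec R_s \to \sS$ hits the generic point of $\sS$, which follows from the chain $\sO_{\sS, s^\flat} \hookrightarrow \sO_{\sS \otimes W(k), s} \hookrightarrow R_s$ of injections of integral domains (using flatness of $\sO_L[1/N] \to W(k)$, flatness of completion, and irreducibility of $\sS$). The main subtlety I anticipate lies in correctly interfacing the formal completion $\hat{\sS}_s$ over $W(k)$ (where $\Def(\xi)$ is defined) with the algebraic local structure over $\sO_L[1/N]$ in which ``global'' is defined.

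For (iii)~$\Rightarrow$~(i): By the discussion of \S\ref{sec: specialization}, being global means $\xi = \mathrm{sp}_\gamma(\beta)$ for some path $\gamma$ from $\bar{\eta}$ to $s$ and some $\beta \in \NS(\sX_{\bar{\eta}})$. Combining $p \nmid N$ with assumption (iv) gives $\NS(\sX_{\bar{\eta}})_{\mathrm{tor}} \otimes \IZ_{(p)} = 0$, and assumption (v) then upgrades the inclusion to an isomorphism $\NS(\sX_\eta) \otimes \IZ_{(p)} \cong \NS(\sX_{\bar{\eta}}) \otimes \IZ_{(p)}$. Hence there exist $m \in \IN \setminus p\IN$ and $\beta' \in \NS(\sX_\eta)$ with $m\beta - \beta'$ torsion of order prime to $p$ in $\NS(\sX_{\bar{\eta}})$. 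Applying $\mathrm{sp}_\gamma$, which coincides with the canonical specialization $\NS(\sX_\eta) \to \NS(\sX_s)$ on Galois-invariant classes, gives $m\xi - \mathrm{sp}_\gamma(\beta')$ torsion of order prime to $p$ in $\NS(\sX_s)$. Tensoring with $\IZ_{(p)}$ kills this torsion, and since $m$ is a unit in $\IZ_{(p)}$ I obtain $\xi \otimes 1 \in \NS(\sX_\eta)_s \otimes \IZ_{(p)}$.
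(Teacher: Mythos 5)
Your proposal is correct and follows essentially the same route as the paper: (i)~$\Rightarrow$~(ii) via \ref{lem: Def(xi)}, \ref{lem: generic LB}(a) and the globalization argument of \ref{globalimagelem2}; (ii)~$\Rightarrow$~(iii) from the formal smoothness of $\hat{\shP}^\xi \to \Def(\xi)$ in \ref{thm: Deligne generalized}; and (iii)~$\Rightarrow$~(i) from the specialization discussion of \S\ref{sec: specialization} together with assumption (v). The only cosmetic difference is that in (ii)~$\Rightarrow$~(iii) the paper deduces dominance directly from formal smoothness (injectivity of $\sO_{\sS,s^\flat}\to R_s \to$ the completed local ring of $\shP_{\sX/\sS}$), so your detour through Grothendieck's formal existence theorem, while valid, is not needed.
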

\begin{proof}
    (i) $\Rightarrow$ (ii): By \ref{lem: Def(xi)} we may replace $\xi$ by a prime-to-$p$ power and assume that the class of $\xi$ lies in $\NS(\sX_\eta)_s$. Moreover, as $\Def(\xi)$ depends only on the class of $\xi$ in $\NS(\sX_s)$, we may assume that $\xi$ is global on $\sS \tensor W$ by \ref{lem: generic LB}(a). Then we apply \ref{globalimagelem2}. 

    (ii) $\Rightarrow$ (iii): This is clear, because by \ref{thm: Deligne generalized}, (ii) implies that the formal completion of $\shP_{\sX/\sS}$ at $\xi$ is formally smooth over $\hat{\sS}_s$ and hence $\shP(\sS, s, \xi)$ dominates $\sS$. 

    (iii) $\Rightarrow$ (i): That $\shP(\sS, s, \xi)$ dominates $\sS$ implies that the class of $\xi$ in $\NS(\sX_s)$ lies in the image of $\NS(\sX_{\bar{\eta}})$ under the specialization map $\mathrm{sp}_\gamma : \NS(\sX_{\bar{\eta}}) \to \NS(\sX_s)$ for some path $\gamma$ connecting $\bar{\eta}$ and $s$ \ref{sec: specialization}. However, we have $\NS(\sX_\eta) \tensor \IZ_{(p)} = \NS(\sX_{\bar{\eta}}) \tensor \IZ_{(p)}$ by assumption (v) in the setup of \S\ref{sec: general setup}. 
\end{proof}

\begin{lemma}
\label{multicharcodimlem}
Let $E \subset \sS$ and $r_0 \in \IN$ be as introduced in $\ref{Econstrprop}$ and assume that $r_0!$ is invertible in $R$. Let $\sE \subseteq \sS$ be the union of $E$ and the smallest closed subscheme $E'$ containg all points $t \in \sS$ such that the Kodaira-Spencer map has rank $< h^{0, 2}(\IV) = \mathrm{rank\,} \sH/ \Fil^1 \sH$.  

Let $(s, \xi)$ be an LB-pair over $\sS \smallsetminus \sE$ and set $k := k(s)$. Let $\hat{\sS}_{s, k}$ be the formal completion of $\sS_{k}$ at $s$, and let $\Def(\xi)_k$ be the scheme-theoretic image in $\hat{\sS}_{s, k}$ of the formal completion $\hat{\shP}^\xi_k$ of $\shP_{\sX/\sS} \tensor k$ at $(s, \xi)$. If $\xi$ is power-primitive, then $\Def(\xi)_k$ has codimension $h^{0,2} := h^{0, 2}(\IV)$ in $\sS_{k}$.
\end{lemma}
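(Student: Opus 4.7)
The plan is to obtain the upper bound $\codim_{\sS_k}\Def(\xi)_k \leq h^{0,2}$ immediately from Theorem~\ref{thm: Deligne generalized}, which states that the defining ideal of $\Def(\xi)_k$ in $\widehat{\sO}_{\sS_k,s}$ is generated by $h^{0,2}$ elements. The substance lies in the matching lower bound, which I would establish by contradiction: assuming $\codim\Def(\xi)_k \leq h^{0,2}-1$---equivalently $\dim_s \Def(\xi)_k \geq d$---my goal is to exhibit a geometric point of $\sI_{r_0}$ based at a $k$-rational point $\bar{t}\in(\sS\setminus\sE)(k)$, directly contradicting Proposition~\ref{Econstrprop}, which places the image of $\sI_{r_0}\to\sS$ inside $E\subseteq\sE$.

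The key idea is to pass from the formal $\Def(\xi)_k$ to an algebraic subvariety of $\sS_k$ and then base the jet construction at a nearby smooth point rather than at $s$ itself, since $\Def(\xi)_k$ may well be singular at $s$ in a way that obstructs producing non-degenerate $r_0$-jets into it directly. Using that $\shP^0_{\sX/\sS}$ is smooth over $\sS$ of relative dimension $h^1(\sO_{\sX_s})$ by Proposition~\ref{prop: Kleiman}, together with the formal smoothness of $\hat{\shP}^\xi_k\to\Def(\xi)_k$ of the same relative dimension by Theorem~\ref{thm: Deligne generalized}, I would pick an irreducible component $\sZ_0\subseteq\sZ:=Z(\sS_k,s,\xi)$ through $s$ with $\dim\sZ_0=\dim_s\Def(\xi)_k\geq d$. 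The open subset $\sU:=\sZ_0^{\mathrm{sm}}\cap(\sS\setminus\sE)$ is non-empty in $\sZ_0$, and since $k$ is algebraically closed I can choose a $k$-point $\bar{t}\in\sU(k)$. Over an \'etale neighborhood of $\bar{t}$ in $\sZ_0$, the line bundle $\xi$ extends to a relative line bundle $\tilde{\xi}$ whose specialization $\xi_{\bar{t}}$ remains power-primitive, because the orthogonal decomposition~\eqref{eqn: decomp over Zp} is preserved by specialization within a connected family: the pairing and the sublattice $\NS(\sX_\eta)_{(-)}$ both transport compatibly, per the discussion in~\S\ref{sec: specialization}.

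At $\bar{t}$ I then replay the setup of the lemma: Lemma~\ref{lem: primitive line bundle}(b) yields $v_{\bar{t}}:=c_{1,\dR}(\xi_{\bar{t}})\in\sH^2_{\bar{t}}$ nonzero, and I extend it to a basis $b^\bullet$ of $\sH^2_{\bar{t}}$ with $b^1=v_{\bar{t}}$. The hypothesis $\bar{t}\notin E'$ gives $\dim\ker\mathrm{KS}_{\bar{t}}\leq d-1$, while smoothness of $\sZ_0$ at $\bar{t}$ gives $\dim T_{\bar{t}}\sZ_0\geq d$; a dimension count produces a $d$-dimensional subspace $V\subseteq T_{\bar{t}}\sZ_0$ not contained in $\ker\mathrm{KS}_{\bar{t}}$, and smoothness then lets me realize $V$ as the tangent of a non-degenerate $d$-jet $j_{r_0}:\mathbb{D}^d_{r_0}\to\sZ_0\subseteq\sS$ at $\bar{t}$. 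The pair $(p_{r_0},\alpha^d_{r_0}(p_{r_0}))$ with $p_{r_0}=(\bar{t},b^\bullet,j_{r_0})$ then defines a $k$-point of $\sI_{r_0}$: non-degeneracy is built in; the composition $\psi_{b^\bullet}\circ j_{r_0}$ factors through $\che{L}_v$ because along $\sZ_0$ the line bundle $\tilde{\xi}$ (and hence the horizontal extension of $v_{\bar{t}}$) remains in $\Fil^1\sH^2$; and diagram~\eqref{eqn: diag for KS} combined with $V\not\subseteq\ker\mathrm{KS}_{\bar{t}}$ supplies the non-constancy at order one needed to avoid $(\pi^{r_0}_1)^{-1}(J^d_{1,c}\che{L}_v)$. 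Proposition~\ref{Econstrprop} then forces $\bar{t}\in E\subseteq\sE$, contradicting $\bar{t}\in\sU$.

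The main technical obstacle I expect lies in the algebraization step: carefully relating the formal $\Def(\xi)_k$ to the algebraic $\sZ_0$, constructing $\tilde{\xi}$ over an \'etale neighborhood of $\bar{t}$, and verifying that $\xi_{\bar{t}}$ retains power-primitivity. All of these points rest on the smoothness of $\shP^0_{\sX/\sS}$ and on how the specialization map of Néron-Severi groups interacts with the orthogonal decomposition defining power-primitivity; once they are settled, the remainder of the argument is a mechanical assembly of jet data and a single appeal to Proposition~\ref{Econstrprop}.
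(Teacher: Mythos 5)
Your proposal is correct and follows essentially the same route as the paper: upper bound from Theorem~\ref{thm: Deligne generalized}, then for the lower bound a reduction to a general point of the algebraized image of the Picard component, where power-primitivity persists (an open condition), the Chern class extends flatly inside $\Fil^1$, and the Kodaira-Spencer rank bound off $E'$ yields a non-degenerate, first-order non-constant point of $\sI_{r_0}$, forcing the point into $E$. The only difference is presentational --- you derive a contradiction at a nearby smooth point $\bar{t}$ with an \'etale-locally extended line bundle, whereas the paper relocates the LB-pair along the component $C$ and re-runs the jet construction there --- which is the same maneuver.
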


\begin{proof}
If $\textrm{char}\, k = p > 0$ the fact that $\Def(\xi)_k$ has at most codimension $h^{0,2}$ follows from \ref{thm: Deligne generalized}. If $\textrm{char}\, k = 0$ the same fact follows from \ref{sec: Def(xi) in char 0}. We are then reduced to proving the contrapositive statement which shows that if for an LB-pair $(s, \xi)$ the locus $\Def(\xi)_k$ has codimension strictly less than $h^{0,2}$ in $\sS_{k}$ then $(s, \xi)$ lies over $\sE$. 

Since the map $\hat{\shP}^\xi_k \to \Def(\xi)_k$ is formally smooth with fiber dimension $h^{0, 1}$, the condition that $\Def(\xi)_k$ has codimension less than $h^{0,2}$ is equivalent to the condition that there is a component $C$ of $(\shP_{\sX/\sS} \tensor k)_\red$ containing $(s, \xi)$ which has dimension greater than $m - h^{0, 2} + h^{0, 1}$, where $m$ is the dimension of $\sS_{k}$ over $k$. In particular, the scheme-theoretic image $Z'$ of $C$ on $\sS_k$ has dimension $> m - h^{0, 2}$. To show that $(s, \xi)$ lies over $\sE$, it will suffice to show that $Z'$ lies inside $\sE$. More concretely, we may move the pair $(s, \xi)$ along $C$ and assume that 
\begin{enumerate}[label=(\alph*)]
    \item $s$ lies on the smooth locus of $Z'$, and 
    \item $(s, \xi)^\flat$ lies on $C$ but not on any other irreducible component of $\shP_{\sX/\sS} \otimes k$.
\end{enumerate}
Because such $s$ are Zariski dense in $Z'$, we may reduce to showing the claim for LB-pairs $(s, \xi)$ satisfying these conditions. (We note here that being power-primitive is an open condition on $\shP_{\sX/\sS}$.) Note also that $\Def(\xi)_{k,\red}$ automatically lies inside $Z'$, and is therefore isomorphic to $\Spf(k[\![x_{1}, \hdots, x_{e}]\!])$ for $e = \dim Z' > m - h^{0, 2}$. Fix such a choice of coordinates. 

We shall now assume that $s \not\in E'_k$ and show that $s \in E_k$, which will complete the proof. As $E$ is defined to be the scheme-theoretic image of $\sI_{r_0} \to \sS$, it suffices to construct a preimage of $s$ in $\sI_{r_0}(k)$. Recall the description of the field-valued points of $\sI_{r_0}$ in \ref{rmk: unpack definition}. We may choose an isomorphism $\hat{\sS}_{s,k} \cong \Spf(k[\![x_1, \cdots, x_{m}]\!])$ such that the coordinates agree with the chosen ones of $\Def(\xi)_{k,\red}$ \mpr{(that is, the first $e$ coordinates are those of $\Def(\xi)_{k,\red}$ that we fixed)}. Let $j : \ID^d_{r_0, k} \to \sS_k$ be the jet centered at $s$ defined by $k[\![x_1, \cdots, x_d]\!]/(x_1, \cdots, x_d)^{r_0 + 1}$ for these chosen coordinates on $\Def(\xi)_{k,\red}$. \mpr{(Here $d = \dim_{R} \mathcal{S} - (h^{0,2}(\mathbb{V}^2) - 1) = m - h^{0, 2} + 1$, as before.)} \mpr{As $d \le e$, $j$ maps into $\Def(\xi)_k$. As $j$ is by construction a closed embedding, and in particular injective on tangent spaces, \ref{rmk: unpack definition}(i) is satisfied.} Next, we choose a basis $b^\bullet := \{ b^i \}$ of $\sH^2_{s}$ such that $b^1 = (\pi_2 \circ c_{1, \dR})(\xi)$ (recall that $\pi_2$ is the projection $\sH \to \sH^2$). If $\textrm{char}\, k = p$ this is allowed because $\xi$ is power-primitive, so that by \ref{lem: primitive line bundle} $(\pi_2 \circ c_{1, \dR})(\xi) \in \sH^2_s$ is nonzero. Now consider the $r_0$-limp $\psi_{b^\bullet} : \Delta^{r_0}_s \to \che{L}_k$ (cf. \ref{rlimpdef}). 

We have a commutative diagram (cf. (\ref{eqn: diag for KS}))
\[\begin{tikzcd}
	{T_s \sS_k} & { \Hom(\Fil^1 \sH^2_s / \Fil^2 \sH^2_s, \sH^2_s / \Fil^1 \sH^2_s) } \\
	{T_s \Delta^{r_0}_s} & {T_{\psi_{b^\bullet}(s)}\che{L}_k}
	\arrow[from=1-1, to=1-2]
	\arrow[equal, from=1-1, to=2-1]
	\arrow["{T_s(\psi_{b^\bullet})}"', from=2-1, to=2-2]
	\arrow[hook, from=1-2, to=2-2].
\end{tikzcd}\]
By the assumption that $s \not\in E'_k$, ${T_s(\psi_{b^\bullet})}$ has rank $\ge h^{0, 2}$, and hence cannot vanish on the tangent space of $\Def(\xi)_{k,\red}$, or that of $\ID^d_{r_0, k}$. This verifies \ref{rmk: unpack definition}(ii). Finally, as $\xi$ deforms over $\ID^d_{r_0, k} \stackrel{j}{\into} \Def(\xi)_k$, $b^1 = (\pi_2 \circ c_{1, \dR})(\xi)$ extends to a flat section of $\Fil^1 \sH_{k}$ over $\ID^d_{r_0, k}$. This verifies \ref{rmk: unpack definition}(iii). Therefore, $((s, b^\bullet, j), \psi_{b^\bullet} \circ j)$ defines a point in $\sI_{r_0}(k)$ lying above $s$, as desired. 
\end{proof}

\begin{corollary}
\label{cor: power-primitive case}
Suppose that in the situation of \ref{multicharcodimlem} $\mathrm{char\,} k = p > 0$. Let $\hat{\sS}_s$ be the formal completion of $\sS_W := \sS \tensor W$ at $s$ for $W := W(k)$, and let $\Def(\xi)$ be the scheme-theoretic image \mpr{in $\hat{\sS}_s$} of the formal completion $\hat{\shP}^\xi$ of $\shP_{\sX/\sS} \tensor W$ at $(s, \xi)$. If $\xi$ is power-primitive, then $\Def(\xi)$ is flat over $W$. 
\end{corollary}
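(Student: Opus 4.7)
The plan is to combine the two main inputs already available: Theorem \ref{thm: Deligne generalized} gives that the defining ideal $J$ of $\Def(\xi)$ in $\hat{\sS}_s$ is generated by at most $h := h^{0,2}$ elements, while Lemma \ref{multicharcodimlem} gives that $\Def(\xi)_k$ has codimension exactly $h$ in $\hat{\sS}_{s,k}$. Flatness over $W$ will then follow from a dimension count after checking that $\Def(\xi)$ is a complete intersection.

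First I would record the basic geometry of $\hat{\sS}_s$. Since $\sO_L[1/N]$ is unramified over $\IZ[1/N]$, the scheme $\sS_W = \sS \otimes_{\sO_L[1/N]} W$ is smooth over $W$, and so $\hat{\sS}_s \cong \Spf(W[\![x_1,\dots,x_m]\!])$ with $m = \dim_k \sS_k$. Writing $R := W[\![x_1,\dots,x_m]\!]$ and $J \subseteq R$ for the ideal defining $\Def(\xi)$, this is a regular local ring of dimension $m+1$, and by \ref{thm: Deligne generalized} we may write $J = (f_1,\dots,f_h)$. Krull's height theorem then gives
\[ \dim R/J \;\geq\; (m+1) - h. \]

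Next I would pin down $\dim R/J$ on the nose. Since $\Def(\xi)_k = \Spf(R/(J+pR))$ has dimension $m-h$ by \ref{multicharcodimlem}, and since for any quotient of a Noetherian complete local ring by an ideal containing no power of $p$ we have $\dim R/J \leq \dim R/(J+pR) + 1$, the two bounds force
\[ \dim R/J \;=\; (m+1) - h, \]
and moreover show that $p$ is not contained in any minimal prime of $R/J$ (otherwise the right-hand bound would drop by $1$). The equality of dimensions says exactly that $f_1,\dots,f_h$ form a regular sequence in the Cohen–Macaulay ring $R$, so $R/J$ is a complete intersection and in particular Cohen–Macaulay.

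Finally, flatness of $R/J$ over $W$ is equivalent to $p$ being a non-zero-divisor in $R/J$. In a Cohen–Macaulay local ring, an element is a non-zero-divisor if and only if it avoids every associated (equivalently, minimal) prime, which we just verified for $p$. This completes the argument. The only step with any subtlety is bridging from the codimension statement in the special fiber (Lemma \ref{multicharcodimlem}, which is where the power-primitivity of $\xi$ and avoidance of $\sE$ are used) to the integral dimension of $\Def(\xi)$; once one has the regular local base $\hat{\sS}_s$, the complete intersection and Cohen–Macaulay formalism does the rest automatically.
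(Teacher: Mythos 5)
Your argument is correct and follows the paper's proof in all essentials: the codimension-$h^{0,2}$ bound from \ref{thm: Deligne generalized} combined with the exact special-fibre codimension from \ref{multicharcodimlem} forces $\Def(\xi)$ to be a complete intersection of codimension $h^{0,2}$, hence Cohen--Macaulay, from which flatness over $W$ follows. The only cosmetic difference is that you conclude by checking $p$ is a non-zero-divisor (avoiding the minimal primes of the equidimensional CM ring) where the paper invokes miracle flatness; these are interchangeable.
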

\begin{proof}
By \ref{thm: Deligne generalized}, $\Def(\xi) \subseteq \hat{\sS}_s$ is cut out by at most $h^{0, 2}$ equations, and by \ref{multicharcodimlem}, $\Def(\xi)_k = \Def(\xi) \tensor k$ has codimension exactly $h^{0, 2}$ in $\hat{\sS}_s \tensor k$. This implies that $\Def(\xi)$ has codimension exactly $h^{0, 2}$ in $\hat{\sS}_s$ and hence is Cohen-Macaulay by \cite[02JN]{stacks-project}. Moroever, by miracle flatness \cite[00R4]{stacks-project} $\Def(\xi)$ is flat. 
\end{proof}

\begin{remark}
    \label{rmk: primitive implies general} It will be clear from \ref{sec: actual proof of main thms} below and the proof therein that a fortiori the conclusion of \ref{multicharcodimlem} and \ref{cor: power-primitive case} continue to hold if the assumption that $\xi$ is power-primitive is replaced by the more general assumption that $(s, \xi)$ is not global over $\sS$. However, due to the nature of the jet theoretic arguments, treating the power-primitive case is an important step in treating this more general case. 
\end{remark}

\subsection{Proofs of Main Theorems} Let $\sS$ be as in the introduction. Namely, let $L \subseteq \IC$ be a number field, $N \in \IN_{\geq 1}$ and $\sS$ be a smooth quasi-projective $\sO_L[1/N]$-scheme such that $\sS_\IC$ is connected. Let $f : \sX \to \sS$ be a smooth projective morphism with geometrically connected fibers. Assume that $f_\IC$ has large monodromy and sufficiently large period image. Note that to prove \ref{mainthm1} and \ref{mainthm2}, we are allowed to increase $N$, so that we may also assume that the family is admissible \ref{lem: admissible for large primes}. Moreover, suppose that there is a finite extension $L'$ of $L$ in $\IC$, $N' \ge N$, and a smooth scheme $\sS'$ over $\sO_{L'}[1/N']$ such that the generic fiber $\sS'_{L'}$ remains geometrically connected (as an $L'$-variety), and $\sS'$ is an \'etale cover of $\sS \tensor_{\sO_L[1/N]} \sO_{L'}[1/N']$. Then to prove the theorems we may replace $(L, N, \sS)$ by $(L', N', \sS')$ and $\sX$ by its restriction to $\sS'$. 

\begin{lemma}
\label{lem: replacement}
    After such a replacement, we may assume that the family $f : \sX \to \sS$ satisfies the assumptions in \S\ref{sec: general setup} with $R = \sO_L[1/N]$. 
\end{lemma}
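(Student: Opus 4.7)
The plan is to verify the six conditions (i)--(vi) of \S\ref{sec: general setup} in order, one at a time. Condition (i) holds by hypothesis and is preserved both by \'etale covers of $\sS$ and by inversion of primes in $\sO_L[1/N]$; moreover, by \ref{lem: admissible for large primes} I may also assume, after enlarging $N$, that $\sX \to \sS$ is admissible.

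For condition (ii) and the first half of (v), I would observe that both the component group of $\Mon(\IV_B)$ and the image of the representation $\Gal(\bar\eta/\eta) \to \GL(\NS(\sX_{\bar\eta})_\tf)$ are finite (the latter because $\NS(\sX_{\bar\eta})_\tf$ has finite rank and the Galois action preserves the integral lattice). Let $\sS'_\IC \to \sS_\IC$ be a connected finite \'etale cover dominating the one corresponding to the intersection of the two kernels. Spreading this cover out, it is defined over $\sO_{L'}[1/N']$ for a suitable finite extension $L'/L$ and enlarged $N'$, and $\sS'_{L'}$ can be arranged to be geometrically connected by further enlarging $L'$. Replacing $(L, N, \sS)$ by $(L', N', \sS')$ then establishes (ii), and combined with \ref{lem: Galois descent} also yields the isomorphism $\NS(\sX_\eta)_\tf \cong \NS(\sX_{\bar\eta})_\tf$ needed in (v).

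Next, I would further enlarge $N$ so as to invert the finitely many primes dividing: (a) the order of $\NS(\sX_{\bar\eta})_{\mathrm{tor}}$, which yields the first half of (iv) and makes $\NS(\sX_{\bar\eta})[N^{-1}]$ torsion-free, so that the second isomorphism in (v) follows from the first; (b) the discriminant of the pairing on $\NS(\sX_{\bar\eta})_\tf$ induced by $\bxi_0$, yielding the self-duality in (iv); and (c) the discriminant of the Poincar\'e--Lefschetz pairing on $\sH_\dR$, yielding (iii). Each is a finite list of primes, so this enlargement is allowed.

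Finally, for (vi), once (iii)--(v) are in place I would pick generators $\xi_1, \ldots, \xi_\rho$ of $\NS(\sX_\eta)_\tf$ and, using \ref{lem: generic LB}(a) (applicable since $\sS$ is smooth hence normal), lift them to line bundles $\wt{\xi}_i \in \Pic(\sX)$. The relative de Rham Chern classes $c_{1,\dR}(\wt{\xi}_i)$ are horizontal sections of $\sH_\dR$, and they assemble into a map $\NS(\sX_\eta) \tensor_\IZ \sO_\sS \to \sH_\dR$. Because the restricted pairing on $\NS(\sX_\eta)$ and the ambient pairing on $\sH_\dR$ are now both self-dual, this map will be injective with image an orthogonal direct summand $\sH^0_\dR$, establishing (vi). I expect the main obstacle to lie not in any single step but rather in the bookkeeping needed to coordinate the successive enlargements of $L$ and $N$ and the spreading-out of the \'etale covers so that all six conditions hold simultaneously after a single replacement; the key observation making this possible is that only finitely many primes and a single finite monodromy/Galois quotient are involved at each stage.
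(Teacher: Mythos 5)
Your proposal is correct and follows essentially the same route as the paper: pass to a connected \'etale cover (spread out over a finite extension $L'$ and enlarged $N'$) to make $\Mon(\IV_B)$ connected and the Galois action on $\NS(\sX_{\bar\eta})_\tf$ trivial, invoke \ref{lem: Galois descent} for (v), and enlarge $N$ to handle the remaining finitely-many-primes conditions. The paper only writes out the arguments for (ii) and the Galois-descent step, dismissing the rest as automatic up to increasing $N$; your additional detail on (iii), (iv) and (vi) is consistent with what the paper leaves implicit (your parenthetical reason for finiteness of the Galois image on $\NS(\sX_{\bar\eta})_\tf$ is not by itself a proof, but the fact is standard and is also simply asserted in the paper).
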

\begin{proof}
    Only conditions (ii) and (v) \S\ref{sec: general setup} need explanation. The other conditions are automatically satisfied up to increasing $N$. To satisfy (ii), we first find some connected \'etale cover $\sS'_\IC$ of $\sS_\IC$ such that $\Mon(R^2 f_{\IC*} \IQ|_{\sS'_\IC})$ is connected. Let $\overline{L} \subseteq \IC$ be an algebraic closure of $L$. Since for any $s \in \sS(\IC)$, $\pi_1^\et(\sS_{\overline{L}}, s) = \pi_1^\et(\sS_\IC, s)$, such $\sS'_\IC$ is defined over $\overline{L}$ and hence over some finite extension $L'$ of $L$ in $\IC$. Then by spreading out, we obtain some $N' > N$ and $\sO_{L'}[1/N']$-scheme $\sS'$ admittible an \'etale $\sO_L[1/N']$-morphism to $\sS$. Make replacements accordingly. Then to satisfy (v), we note that the action of $\Gal_{k(\eta)} = \Gal(\eta/\bar{\eta})$ on $\NS(\sX_{\bar{\eta}})_{\mathrm{tf}}$ factors through a finite quotient and the surjective open morphism $\Gal_{k(\eta)} \to \pi_1^\et(\sS, \bar{\eta})$. Hence some finite open subgroup of $\pi_1^\et(\sS, \bar{\eta})$ acts trivially on  $\NS(\sX_{\bar{\eta}})_{\mathrm{tf}}$. Hence by replacing $\sS$ by a connected \'etale cover, $\Gal_{k(\eta)}$ acts trivially on $\NS(\sX_{\bar{\eta}})_{\mathrm{tf}}$. Then we conclude by \ref{lem: Galois descent}. 
\end{proof}

\begin{lemma}
\label{lem: pedantic lemma for lci}
    Let $k$ be a field and $S$ be a local $k$-algebra essentially of finite type with residue field $k$. Then $S$ is a complete intersection (over $k$) in the sense of \cite[00SD]{stacks-project} if and only if its completion $\hat{S}$ is of the form $k[\![x_1, \cdots, x_n]\!]/I$ for some $n$ and ideal $I$ generated by a regular sequence. 
\end{lemma}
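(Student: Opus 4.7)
My plan is to handle the two directions separately. The ``if'' direction is immediate: if $\hat{S} \cong k[\![x_1, \ldots, x_n]\!]/I$ with $I$ generated by a regular sequence then $S$ is by definition a complete intersection in the sense of \cite[00SD]{stacks-project}, since $k[\![x_1, \ldots, x_n]\!]$ is a regular complete Noetherian local ring.

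For the ``only if'' direction, I would first apply Cohen's structure theorem to obtain \emph{some} presentation $\hat{S} \cong k[\![x_1, \ldots, x_n]\!]/I$, using that $\hat{S}$ is a Noetherian complete local $k$-algebra with residue field $k$. The task reduces to showing that for this specific presentation the ideal $I$ is generated by a regular sequence. The hypothesis gives \emph{some} other presentation $\hat{S} \cong R/J$ where $R$ is a regular complete Noetherian local ring and $J$ is a regular-sequence ideal, so the heart of the matter is to promote this property from one presentation to another, i.e., to establish that being a regular-sequence ideal in some presentation is an intrinsic property of $\hat{S}$.

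The main obstacle is verifying this intrinsic characterization, which I plan to do by reducing both presentations to a common ``minimal'' presentation in which the number of variables equals the embedding dimension $e = \dim_k \mathfrak{m}_{\hat{S}}/\mathfrak{m}_{\hat{S}}^2$. Given any presentation $\hat{S} = k[\![z_1, \ldots, z_N]\!]/I'$ with $N \geq e$, after a change of variables one arranges that the images of $z_1, \ldots, z_e$ give a basis of $\mathfrak{m}_{\hat{S}}/\mathfrak{m}_{\hat{S}}^2$, and that $z_i - g_i(z_1, \ldots, z_e) \in I'$ for each $i > e$ and suitable power series $g_i$. These differences form part of a regular system of parameters and hence a regular sequence in $k[\![z_1, \ldots, z_N]\!]$, and killing them identifies the presentation with a minimal one of the form $k[\![z_1, \ldots, z_e]\!]/I''$ with $I'' \subseteq \mathfrak{m}^2$; one then checks that $I'$ is regular-sequence-generated if and only if $I''$ is, for instance by comparing the minimal number of generators to the codimension. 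Since any two minimal presentations of $\hat{S}$ are (non-canonically) isomorphic by Cohen's theorem, this intrinsic characterization follows, allowing us to transfer the hypothesis from the presentation $\hat{S} = R/J$ to the presentation $\hat{S} = k[\![x_1, \ldots, x_n]\!]/I$.
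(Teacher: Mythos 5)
There is a genuine gap, and it sits exactly where the content of the lemma lies. The Stacks Project definition \cite[00SD]{stacks-project} of a complete intersection over $k$ is a statement about a finite type $k$-algebra $T$ (of which $S$ is a localization): it asks for a presentation $T = k[x_1, \ldots, x_n]/(f_1, \ldots, f_c)$ as a quotient of a \emph{polynomial} ring with $\dim T = n - c$. It says nothing directly about the completion $\hat{S}$ or about regular sequences in a power series ring. Your ``if'' direction declares the implication to hold ``by definition'', and your ``only if'' direction opens with the assertion that the hypothesis already provides a presentation $\hat{S} \cong R/J$ with $R$ regular complete local and $J$ generated by a regular sequence. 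Both statements presuppose the equivalence between the finite-type notion of \cite[00SD]{stacks-project} and the complete-local-ring notion (quotient of a regular complete local ring by a regular sequence). That equivalence is precisely what the lemma asserts, and it is what the paper's one-line proof supplies by citing \cite[09Q1]{stacks-project}, together with Cohen's structure theorem to produce a surjection $k[\![x_1, \ldots, x_n]\!] \twoheadrightarrow \hat{S}$ on which the latter notion can be tested.

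What you do prove --- that ``the kernel is generated by a regular sequence'' is independent of the chosen surjection from a regular complete local ring onto $\hat{S}$ --- is a correct and standard ingredient (essentially \cite[09Q0]{stacks-project}), but it only shows that the right-hand condition of the lemma is well posed; it does not connect it to \cite[00SD]{stacks-project}. To close the gap you would need, in one direction, to complete a presentation $S = \bigl(k[x_1, \ldots, x_n]/(f_1, \ldots, f_c)\bigr)_{\mathfrak{m}}$ with $\dim = n - c$ and check that $f_1, \ldots, f_c$ form a regular sequence in $k[x_1, \ldots, x_n]_{\mathfrak{m}}^{\wedge} \cong k[\![x_1, \ldots, x_n]\!]$; this uses a dimension count to see that the ideal they generate has height $c$, plus the fact that in a Cohen--Macaulay local ring an ideal of height $c$ generated by $c$ elements is generated by a regular sequence. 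In the other direction, you must descend the property ``generated by $\mathrm{height}$-many elements'' along the faithfully flat map $k[x_1, \ldots, x_n]_{\mathfrak{m}} \to k[x_1, \ldots, x_n]_{\mathfrak{m}}^{\wedge}$ so as to manufacture a polynomial presentation with the correct codimension, as required by \cite[00SD]{stacks-project}. Both steps are standard, but neither appears in your proposal, and without them the argument is circular.
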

\begin{proof}
    This can be deduced from \cite[09Q1]{stacks-project}, since by Cohen structure theorem \cite[032A]{stacks-project} there exists a surjection $k[\![x_1, \cdots, x_n]\!] \twoheadrightarrow \hat{S}$ for some $n$. 
\end{proof}

\subsubsection{} \label{sec: actual proof of main thms} Now we shall prove \ref{mainthm1} and \ref{mainthm2} under the assumption that $\sX/\sS$ satisfies the assumptions in \S\ref{sec: general setup}. Moreover, we may assume that $\sO_L[1/N]$ is unramified over $\IZ[1/N]$. Let $k$ be an algebraically closed field of characteristic $p > 0$ and write $W$ for $W(k)$. Let $(s, \xi)$ be an LB-pair defined over $k$ and $\hat{\sS}_s$ be the formal completion of $\sS_W := \sS \tensor W$ at $s$. Let $\sE$ be the proper closed subscheme of $\sS$ defined in \ref{multicharcodimlem}. Assume that $s \not\in \sE_k$. Let $\Def(\xi)$ be the deformation space of $\xi$ when $\sX$ is restricted to $\hat{\sS}_s$ (cf. \ref{thm: Deligne generalized}). 

For \ref{mainthm1} what we need to show is that $\Def(\xi)$ admits a point valued in $\sO_K$ for some finite extension $K$ over $K_0 := W[1/p]$. By \ref{lem: global LB def} it suffices to consider the case when the class of $\xi$ in $\NS(\sX_s) \tensor \IZ_{(p)}$ does not lie in $\NS(\sX_\eta)_s \tensor \IZ_{(p)}$, as otherwise $\Def(\xi) = \hat{\sS}_s$.  As $\Def(\xi)$ depends only on the class of $\xi$ in $\NS(\sX_s)$, using \ref{lem: primitive line bundle} we may assume that $m' \xi = \xi_0 + p^h \xi'$ for some $\xi_0$ whose class lies in $\NS(\sX_\eta)_s$, a power-primitive $\xi'$, $h \in \IN_{> 1}$ and $m' \in \IN \smallsetminus p\IN$. By \ref{lem: Def(xi)} we are free to replace $\xi$ by a prime-to-$p$ power, so we may assume $m' = 1$. By \ref{globalimagelem2}, $\xi_0$ deforms to the entire $\hat{\sS}_s$, so that $\Def(\xi) = \Def(p^h \xi')$. Clearly, $\Def(p^h \xi')$ contains $\Def(\xi')$, so it suffices to show that $\Def(\xi')$ admits some $\sO_K$-point for some $K$. Then the conclusion follows from \ref{cor: power-primitive case}, which tells us that $\Def(\xi')$ is flat over $W$, and hence we can find an $\sO_K$-valued point for some finite extension $K$ of $K_0$ by \ref{lem: Deligne's little lemma}. 

Now we use \ref{mainthm1} to prove \ref{mainthm2}. Let $(s, \xi) \to \shP_{\sX/\sS}$ be any LB-pair as in the first paragraph. Our goal is to show the following two statements. 
\begin{enumerate}[label = \upshape{(\alph*)}]
    \item The relative Picard scheme $\shP_{\sX/\sS}$ is syntomic over $\sO_L[1/N]$ at the point $(s, \xi)^\flat$. 
    \item Let $C$ be any irreducible component of $(\shP_{\sX/\sS})_\red$ which does not dominate $\sS$. Then the scheme-theoretic image $Z$ of $C$ on $\sS$ is flat over $\sO_L[1/N]$ and has codimension exactly $h^{0, 2} := \mathrm{rank\,} \sH/ \Fil^1 \sH$. 
\end{enumerate}

Again, if the class of $\xi$ in $\NS(\sX_s) \tensor \IZ_{(p)}$ lies in $\NS(\sX_\eta)_s \tensor \IZ_{(p)}$, then by \ref{lem: global LB def} $\Def(\xi) = \hat{\sS}_s$. As $\hat{\shP}^\xi$ is formally smooth over $\Def(\xi)$, we have in this case that $\shP_{\sX/\sS}$ is smooth over $\sS$ at $(s, \xi)^\flat$. Therefore, we again restrict to considering the case when the class of $\xi$ does not lie in $\NS(\sX_\eta)_s \tensor \IZ_{(p)}$. Note that \ref{lem: global LB def} tells us that in this case $(s, \xi)$ is not global over $\sS$, so that none of the irreducible components of $(\shP_{\sX/\sS})_\red$ which contains $(s, \xi)^\flat$ dominates $\sS$. 

We first argue that, in order to show (a), it suffices to show that $\Def(\xi) \tensor k \subseteq \hat{\sS}_s \tensor k$ has codimension exactly $h^{0, 2}$. Recall that we used $\shP(\sS, s, \xi)$ to denote the localization of $\shP_{\sX/\sS}$ at $(s, \xi)^\flat$ \ref{def: localizations}. Let $\fp \in \Spec(\sO_L[1/N])$ be the image of $(s, \xi)^\flat \in \shP_{\sX/\sS}$. By \cite[01UE]{stacks-project}, to show that $\shP_{\sX/\sS}$ is syntomic over $\Spec(\sO_L[1/N])$ at $(s, \xi)^\flat$ it suffices to show the following statements.

\begin{enumerate}[label=\upshape{(\roman*)}]
    \item $\shP(\sS, s, \xi)$ is flat over the localization 
     $\sO_{L, \fp}$. 
    \item $\shP(\sS,s, \xi) \tensor_{\sO_L} k(\fp)$ is a complete intersection over $k(\fp)$. 
\end{enumerate}
Note that $s$ induces an embedding $k(\fp) \into k$. We reduce to showing 
\begin{enumerate}[label=\upshape{(\roman*)}]
    \item[(iii)] $\hat{\shP}^\xi$ is flat over $W$, and 
    \item[(iv)] $\hat{\shP}^\xi_k := \hat{\shP}^\xi \tensor_W k$ is a complete intersection over $k$. 
\end{enumerate}
That (iii) implies (i) is clear, as $\hat{\shP}^\xi$ is faithfully flat over $\shP(\sS, s, \xi)$. That (iv) implies (ii) follows from \cite[00SI]{stacks-project} and \ref{lem: pedantic lemma for lci}. Now, using the fact that $\hat{\shP}^\xi$ is formally smooth over $\Def(\xi)$, in (iii) and (iv) we may equivalently replace $\hat{\shP}^\xi$ by $\Def(\xi)$. Then both statements would follow if $\Def(\xi)_k = \Def(\xi) \tensor k$ has codimension exactly $h^{0, 2}$ in $\hat{\sS}_{s, k}$, as by \ref{thm: Deligne generalized} $\Def(\xi) \subseteq \hat{\sS}_s$ is cut out by at most $h^{0, 2}$ equations (cf. the argument for \ref{cor: power-primitive case}).

Next, we argue that (b) implies (a). Indeed, suppose for the sake of contradiction that (b) holds but $\Def(\xi) \tensor k \subseteq \hat{\sS}_s \tensor k$ has codimension $< h^{0, 2}$. Then we have $\dim \Def(\xi) \tensor k \ge m - h^{0, 2} + 1$. Here $m$ denotes the relative dimension of $\sS$ over $\sO_L[1/N]$. Recall that $\sX \to \sS$ is an admissible family, so that $h^{0, 1}$ of its fibers are constant. As $\hat{\shP}^\xi$ is formally smooth over $\Def(\xi)$ with fiber dimension $h^{0, 1}$, there exists an irreducible component $C'$ of $(\shP_{\sX/\sS} \tensor_{\sO_L} k(\fp))_\red$ which contains $(s, \xi)^\flat$ and has dimension $\ge m - h^{0, 2} + 1 + h^{0, 1}$. The scheme-theoretic image of $C'$ on $\sS_{k(\fp)}$ hence has dimension $\ge m - h^{0, 2} + 1$. Then any irreducible component $C$ of $(\shP_{\sX/\sS})_\red$ which contains $C'$ will not satisfy the condition in (b). 

Finally, we prove (b). Note that $Z$ is reduced and irreducible, so that it has a generic point $\eta_{Z}$. Choose an LB-pair $(t, \zeta)$ such that $t$ lies above $\eta_Z$. We first argue that $\mathrm{char\,} \eta_{Z} = 0$. Indeed, otherwise we must have that the pair $(t, \zeta)$ does not lift to characteristic $0$, which contradicts \ref{mainthm1} that we just proved. Now the fact that $\mathrm{char\,} \eta_{Z} = 0$ automatically implies that $Z$ is flat over $\sO_{L}[1/N]$ by \ref{lem: Zariski closure is flat}, as $Z$ is the scheme-theoretic image of the morphism $\eta_{Z} \to \sS$. Moreover, we argue that $Z$ has codimension exactly $h^{0, 2}$ in $\sS$. We reduce to showing that $Z_{k(t)}$ has codimension exactly $h^{0, 2}$ in $\sS_{k(t)}$. This follows from \ref{multicharcodimlem} because $Z_{k(t)}$ is contained in the Zariksi closure of $\Def(\zeta)_{k(t)}$ on $\sS_{k(t)}$. Indeed, we may replace $\zeta$ by a power and assume that $\zeta = \zeta_0 + \zeta'$ for some $\zeta_0 \in \NS(\sX_\eta)_t$ and $\zeta' \in \NS(\sX_\eta)_t^\perp$. Then $\Def(\zeta)_{k(t)} = \Def(\zeta')_{k(t)}$. Note that $\mathrm{char\,} t = 0$, so that $\zeta'$ is automatically power-primitive. \qed


\begin{remark}
    Assuming that $\sO_L[1/N]$ is unramified over $\IZ[1/N]$ in \ref{sec: actual proof of main thms} is not strictly necessary for the argument if one is willing to keep track of possibly ramified extensions of $W(k(s))$. However, as we already have to increase $N$ in many other steps, it is not clear that we gain any generality in doing so. 
\end{remark}

\section{The $h^{0, 2} = 2$ Case}
\subsection{Theorem of the Fixed Part in a $p$-adic Setting}

\subsubsection{}\label{sec: setup 2} We shall define a variant of the general setup in \S\ref{sec: general setup}. The main difference is that we drop the assumption (i), and we will impose a two-step filtration on various cohomology sheaves. 


Let $L \subseteq \IC$ be a number field, $N \in \IN_{> 1}$ and $\sT$ is a connected smooth $\sO_L[1/N]$-scheme of finite type. Let $f : \sX \to \sT$ be an admissible family. Suppose that we are given a polarization $\bxi_0$ for this family. Let $\sH_\dR$ be the filtered flat vector bundle $R^2 f_* \Omega^\bullet_{\sX/\sT}$ on $\sT$ and let $\sH_\ell$ be the $\IQ_\ell$-local system $R^2 (f|_{\sT[1/\ell]})_{\et *} \IQ_\ell$ over $\sT[1/\ell]$. Let $\IV = (\IV_B, \IV_\dR)$ be the VHS given by on $\IV_B := R^2 f_{\IC *} \IQ$ and $\IV_\dR := \sH_\dR|_{\sT_\IC}$. As in \S\ref{sec: general setup}, $\bxi_0$ defines a symmetric bilinear pairing on $\IV$, $\sH_\dR$ and $\sH_\ell$. Let $\eta$ be the generic point of $\sT$ and $\bar{\eta}$ be the geometric point defined by a chosen algebraic closure of $k(\eta)$. Assume the following conditions: 

\begin{enumerate}[label=\upshape{(\roman*)}]
    \item $\sO_L$ is unramified at every finite prime $\fp \in \Spec(\sO_{L}[1/N])$.
    \item The pairing induced on $\sH_\dR$ is self-dual.
    \item $\mathrm{Mon}(\IV_B, s)$ is connected for some (and hence every) $s \in \sT(\IC)$. 
    \item For some (and hence every, by \cite[Prop.~6.14]{Larsen-Pink}) prime $\ell$, $\Mon(\sH_\ell|_{\sT_L}, \bar{\eta})$ is connected. 
    \item There exists a section $b : \Spec(\sO_L[1/N]) \to \sT$.
    \item Every prime dividing the order of $\NS(\sX_{\bar{\eta}})_{\mathrm{tor}}$ is a factor of $N$. 
\end{enumerate}
Note that these conditions can always be achieved up to enlarging $N$ and $L$, and replacing $\sT$ by a connected \'etale cover. For each closed point $\fp \in \Spec(\sO_L[1/N])$, we denote by $f_\fp$ the mod $\fp$ reduction of $f$ and $\hat{\sO}_{L, \fp}$ the completion of $\sO_L$ at $\fp$. Let $\sT_\fp := \sT \tensor k(\fp)$ and $\hat{\sT}_\fp := \sT \times_{\sO_{L}[1/N]} \mathrm{Spf}(\hat{\sO}_{L, \fp})$. We identify $\hat{\sO}_{L, \fp}$ with the ring of Witt vectors $W(k(\fp))$ of the residue field $k(\fp)$.

Let $\eta_\IC$ be the generic point of $\sT_\IC$. Then the relative Chern classes of the elements of $\NS(\sX_{\eta_\IC}) \simeq \Pic(\sX_{\IC})/\Pic(\sT_\IC)$ gives rise to a sub-VHS $\IV^0 = (\IV^0_B, \IV^0_\dR)$ of $\IV$. We remark that the assumption \ref{sec: setup 2}(iv) implies that every element of $\NS(\sX_{\bar{\eta}})_\IQ$ descends to $\eta$. Moreover, there is a natural identification of $\NS(\sX_{\bar{\eta}})_\IQ$ with $\H^0(\IV^0)$. Let $\IV^f_B$ be the maximal sub-local system of $\IV_B$ such that for some (and hence every) $s \in \sT(\IC)$, $\pi_1(\sT_\IC, s)$ acts trivially on $\IV^f_{B, s}$. By the theorem of the fixed part, there exists a filtered flat sub-bundle $\IV^f_\dR \subseteq \IV_\dR$ such that $(\IV^f_B, \IV^f_\dR)$ is a sub-VHS. Moreover, we have $\IV^0 \subseteq \IV^f$. We denote the orthogonal complement of $\IV^0$ in $\IV^f$ by $\IV^1$, and that of $\IV^f$ in $\IV$ by $\IV^2$. Note that if $f_\IC$ has large monodromy, then $\IV^1 = 0$ and $\IV^0 = \IV^f$, but we will be primarily interested in the case when this condition fails.


\begin{proposition}
\label{prop: spreading out splitting}
Up to further enlarging $N$, the following conditions hold. 
    \begin{enumerate}[label=\upshape{(\alph*)}]
        \item There exists a decomposition of $\ell$-adic local systems $\sH_\ell = \oplus_{i = 0}^2 \sH^i_\ell$ over $\sT[1/\ell]$ whose restriction to $\sT_\IC$ agrees with $\IV_B \tensor \IQ_\ell = \oplus_{i = 0}^2 \IV^i_B \tensor \IQ_\ell$. 
        \item There exists a decomposition $\sH_\dR = \oplus_{i = 0}^2 \sH^i_\dR$ of filtered flat vector bundles whose restriction to $\sT_\IC$ agrees with $\IV_\dR = \oplus_{i = 0}^2 \IV^i_\dR$. 
        \item For every finite prime $\fp \subseteq \sO_L[1/N]$ and $\sH_{\fp, \cris} := R^2 f_{\fp, \cris*} \sO_{\sX_\fp/ \hat{\sO}_{L, \fp}}$, there exists a decomposition of F-crystals $\sH_{\fp, \cris} = \oplus_{i = 0}^2 \sH^i_{\fp, \cris}$ such that under the crystalline-de Rham comparison isomorphism
        \begin{equation}
            \label{eqn: crystalline de Rham comparison}
            \sH_{\dR}|_{\hat{\sT}_\fp} \iso \sH_{\fp, \cris}(\hat{\sT}_\fp)\footnote{The notation $\sH_{\fp, \cris}(\hat{\sT}_\fp)$ means the coherent sheaf obtained by evaluating the crystal $\sH_{\fp, \cris}$ on $\hat{\sT}_\fp$, which is viewed as a pro-object in the crystalline site $\Cris(\sT_\fp / \hat{\sO}_{L, \fp})$.}
        \end{equation}
        the component $\sH^i_\dR|_{\hat{\sT}_\fp}$ is sent isomorphically onto $\sH^i_{\fp, \cris}(\hat{\sT}_\fp)$. Moreover, the pair $(\sH^i_\cris, \sH^i_\dR|_{\hat{\sT}_\fp})$ is a constant filtered F-crystal for $i = 0, 1$. 
        \item Let $s$ be an alg-geometric point over $\sT_\fp$ for some finite prime $\fp \subseteq \sO_L[1/N]$, and $\xi \in \Pic(\sX_s)$. Then $\shP(\sT, s, \xi)$ contains a point over the generic point of $\sT$ (resp. $\sT_\fp$) if and only if $c_{1, \cris}(\xi) \in \sH^0_{\fp, \cris}|_s$ (resp. $c_{1, \cris}(\xi) \in (\sH^0_{\fp, \cris} \oplus \sH^1_{\fp, \cris})|_s$).
    \end{enumerate}
\end{proposition}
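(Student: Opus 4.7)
The plan is to construct the decomposition in two algebraic stages---first $\sH^0$ from relative Chern classes, then $\sH^f := \sH^0 \oplus \sH^1$ via the theorem of the fixed part---and to obtain $\sH^2$ as the orthogonal complement under the self-dual pairing from \ref{sec: setup 2}(ii). For $\sH^0$, assumptions (iv)--(vi) together with the section $b$ allow (via \ref{lem: generic LB} after enlarging $N$) every class in $\NS(\sX_{\bar\eta})$ to be realised by a line bundle on $\sX$ modulo $\Pic(\sT)$; the relative Chern classes of these line bundles in de Rham, $\ell$-adic, and crystalline cohomology produce compatible sub-sheaves $\sH^0_\dR \subseteq \sH_\dR$, $\sH^0_\ell \subseteq \sH_\ell$, and $\sH^0_{\fp,\cris} \subseteq \sH_{\fp,\cris}$, stable under the Gauss--Manin connection, Galois action, and Frobenius respectively. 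Since Frobenius acts as multiplication by $p$ on crystalline line bundle classes and these classes are horizontal, $\sH^0_{\fp,\cris}$ is automatically a constant filtered F-crystal.

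For $\sH^f$, I would begin over $\sT_\IC$, where the fixed part $\IV^f_B$ is by definition a trivial $\IQ$-local system with stalk $b(\IC)^*\IV^f_B$, so $\IV^f_\dR$ is a trivial flat sub-bundle canonically isomorphic to $\sO_{\sT_\IC} \tensor_\IC b(\IC)^*\IV^f_\dR$. Assumption (iv) together with the comparison isomorphism implies $(\sH_\ell|_{\sT_L})^{\pi_1^\et(\sT_L,\bar\eta)}$ realises $\IV^f_B \tensor \IQ_\ell$ over $\sT_\IC$, and after enlarging $N$ this $\pi_1^\et$-invariant subsheaf spreads to a sub-local system $\sH^f_\ell$ over $\sT[1/\ell]$. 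The flat-bundle trivialisation on $\sT_\IC$ is algebraic---determined by parallel-transporting the stalk at $b(\IC)$---and descends, after a further enlargement of $N$, to a flat isomorphism $\sH^f_\dR \cong b^*\sH^f_\dR \tensor_{\sO_L[1/N]} \sO_\sT$. The crystalline version is then extracted via (\ref{eqn: crystalline de Rham comparison}), and the constancy of $\sH^f_{\fp,\cris}$ as a filtered F-crystal follows because the de Rham trivialisation identifies it with the pullback along $\sT_\fp \to \Spec k(\fp)$ of $b^*\sH^f_{\fp,\cris}$, whence Frobenius compatibility reduces to its (automatic) compatibility at $b$. The orthogonal summand $\sH^2$ is stable under all structures because the pairing is.

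Part (d) combines these structural decompositions with crystalline deformation theory and is the main obstacle. The ``only if'' implications follow from \S\ref{sec: specialization}: if $\shP(\sT,s,\xi)$ dominates $\sT$ then $\xi = \mathrm{sp}_\gamma(\beta)$ for some $\beta \in \Pic(\sX_{\bar\eta})$, and functoriality of $c_{1,\cris}$ forces $c_{1,\cris}(\xi) \in \sH^0_{\fp,\cris}|_s$ by definition of $\sH^0$; the $\sT_\fp$ version is analogous, since monodromy invariants on $\sT_\fp$ coincide with $\sH^f_{\fp,\cris}|_s$. For the converse, the constancy of $\sH^0_{\fp,\cris}$ (resp.\ $\sH^f_{\fp,\cris}$) uniquely extends $c_{1,\cris}(\xi)$ to a horizontal section over $\sT$ (resp.\ $\sT_\fp$); by \cite[Prop.~1.12]{OgusCrystals} and \ref{thm: Deligne generalized} this produces a deformation of a multiple of $\xi$, and removing the multiple via (vi) and \ref{lem: Def(xi)} gives the lift. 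The subtle step will be verifying the constancy of $\sH^f_{\fp,\cris}$ as an F-crystal on ramified PD-thickenings---a $p$-adic analogue of the theorem of the fixed part---which does not follow formally from its Betti counterpart and instead requires tracking Frobenius-compatibility through the spreading-out, reducing ultimately to constancy on $\Spf\, \hat{\sO}_{L,\fp}$ via the section $b$.
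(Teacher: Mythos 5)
Your outline matches the paper's at the level of strategy (build $\sH^0$ from Chern classes, $\sH^f = \sH^0\oplus\sH^1$ from the fixed part, $\sH^2$ as orthogonal complement, then deduce (d) from constancy of the filtered F-crystals), but it omits the one device that makes every hard step of the paper's proof work: a smooth projective compactification $\overline{\sX} \supseteq \sX$ over $\sO_L[1/N]$ together with the global invariant cycle theorem, which realizes $\IV^f$ as the image of $\H^2(\overline{\sX}_\IC) \to \H^2(\sX_{b_\IC})$. This is what (i) descends the Hodge filtration on the fixed part to $L$ (it is inherited from $\H^2_\dR(\overline{\sX}_L/L)$ via $\iota_{L,\dR}$ — parallel transport from $b(\IC)$ only gives you the flat structure, not the filtration), (ii) proves Frobenius-stability of $(\sH^0\oplus\sH^1)_{\fp,\cris}|_{b_\fp}$, since it is the image of the map of F-crystals $\H^2_\cris(\overline{\sX}_\fp) \to \H^2_\cris(\sX_{b_\fp})$, and (iii) identifies $(\sH^0\oplus\sH^1)_{\fp,\cris}|_s$ with the image of $\H^2_\cris(\overline{\sX}_\fp\tensor k(s))$ in part (d). Your claim that Frobenius compatibility at $b$ is ``automatic'' is false: the fiber $\sH^f_{\fp,\cris}|_{b_\fp}$ is a subspace of $\H^2_\cris(\sX_{b_\fp}/W)$ defined by characteristic-zero Hodge theory, and there is no a priori reason it is $F$-stable; this is exactly what the compactification supplies. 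You also need a tool to propagate $F$-stability from the single fiber $b_\fp$ to the whole of $\sT_\fp$ — the paper uses Ogus's theorem that such statements for (iso)crystals can be checked at one closed point — and your ``tracking Frobenius-compatibility through the spreading-out'' does not substitute for it.

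The second genuine gap is the ``only if'' direction of (d) over $\sT_\fp$. You assert that ``monodromy invariants on $\sT_\fp$ coincide with $\sH^f_{\fp,\cris}|_s$,'' but this is a $p$-adic fixed-part statement that does not follow from the Betti one and is in fact the content to be proved. The paper's route is: globality over $\sT_\fp$ plus \ref{lem: generic LB}(d) gives a relative line bundle whose crystalline Chern class is a global section of the isocrystal; Morrow's variational crystalline Tate theorem for divisors then shows a $p$-power multiple of $\xi$ extends to $\sX_\fp$, hence to $\overline{\sX}_\fp$; and the image of $\H^2_\cris(\overline{\sX}_\fp\tensor k(s)) \to \H^2_\cris(\sX_s)$ is exactly $(\sH^0\oplus\sH^1)_{\fp,\cris}|_s$, a direct summand, so no multiple is lost. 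Without the compactification and without an input of Tate-conjecture type, your argument for this implication does not close. (Your treatment of (a), (b), the $\sH^0$ piece, and the ``if'' directions of (d) is essentially sound and close to the paper's.)
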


Recall that $\shP(\sT, s, \xi)$ denotes the localization of $\shP_{\sX/\sT}$ at $(s, \xi)^\flat$ (cf. \ref{def: localizations}). 

\begin{proof}
 By Hironaka's resolution of singularities in characteristic $0$, we can find  a smooth projective $L$-variety $\overline{\sX}_L$ such that $\sX_L$ embeds as an open subvariety into $\overline{\sX}_L$. By enlarging $N$, we may assume that $\overline{\sX}_L$ extends to a smooth projective scheme $\overline{\sX}$ over $\sO_L[1/N]$ and $\sX$ is open in $\overline{\sX}$. Recall that in \ref{sec: setup 2}(v) we chose a section $b : \sO_L[1/N] \to \sT$. \mpr{Let $\iota: \sX_b \into \overline{\sX}$ denote the inclusion.} By the theorem of the fixed part, the natural map induced by $\iota_\IC := \iota \tensor \IC$ 
    \begin{equation}
        \label{eqn: invariant cycle}
        \iota_{\IC, B} : \H^2(\overline{\sX}_\IC, \IQ) \to \H^2(\sX_{b_\IC}, \IQ)
    \end{equation}
    is a surjection onto the $\pi_1(\sT_\IC, b)$-invariant part, i.e., onto $\IV^0_{B, b_\IC} \oplus \IV^1_{B, b_\IC}$. 

    (a) It suffices to show that $\IV^i_{b_\IC} \tensor \IQ_\ell$ is $\pi_1^\et(\sT[1/\ell], b_\IC)$-stable for $i = 0, 1$. We further reduce to showing that they are $\pi_1^\et(\sT_L, b_\IC)$-stable because the natural map $\pi_1^\et(\sT_L, b_\IC) \to \pi_1^\et(\sT[1/\ell], b_\IC)$ is surjective. Since $\NS(\sX_\eta)_\IQ = \NS(\sX_{\bar{\eta}})_\IQ = \H^0(\IV^0)$, the statement is already clear for $i = 0$. Therefore, it suffices to show the $\pi_1^\et(\sT_L, b_\IC)$-stablity of the direct sum $(\IV^0_{B, b_\IC} \oplus \IV^1_{B, b_\IC}) \tensor \IQ_\ell = \IV^f_{B, b_\IC} \tensor \IQ_\ell \subseteq \H^2_\et(\sX_{b_\IC}, \IQ_\ell)$. Since $b_L$ determines a splitting $\pi_1^\et(\sT_L, b_\IC) \iso \pi_1^\et(\sT_{\overline{L}}, b_\IC) \rtimes \Gal_{L}$ and $\pi_1^\et(\sT_{\overline{L}}, b_\IC) = \pi_1^\et(\sT_\IC, b_\IC)$ acts trivially on this direct sum, we reduce to showing that this direct sum is $\Gal_L$-stable. This follows easily from the fact that $\iota_{\IC, B} \tensor \IQ_\ell$ is $\Gal_L$-equivariant, as $\iota_\IC$ is defined over $L$. 

    (b) Set $\sH_{\dR, \IC} := \sH_\dR |_{\sT_\IC}$ and $\sH_{\dR, L} := \sH_\dR |_{\sT_L}$. Note that the horizontal subspace $\H^0(\sH_{\dR, \IC})^{\nabla = 0}$ is invariant under the $\Aut(\IC / L)$-action on $\H^0(\sH_{\dR, \IC})$, so that $\H^0(\sH_{\dR, L})^{\nabla = 0} \tensor_L \IC = \H^0(\sH_{\dR, \IC})^{\nabla = 0}$. Consider now the de Rham realization of $\iota_L := \iota \tensor L$: 
    $$ \iota_{L, \dR} : \H^2_\dR(\overline{\sX}_L/L) \to \H^2_\dR(\sX_{b_L}/L) $$
    This is a surjection onto the horizontal subspace $\H^0(\sH_{\dR, L})^{\nabla = 0} \subseteq \H^2_\dR(\sX_{b_L}/L)$---one can check this by tensoring with $\IC$ and comparing with (\ref{eqn: invariant cycle}). Recall that the theorem of the fixed part in particular says that there is a canonical Hodge filtration on $\H^0(\sH_{\dR, \IC})^{\nabla = 0}$. We observe that this Hodge filtration descends (necessarily uniquely) to $L$, as we can simply take the filtration on the image of $\iota_{L, \dR}$ inherited from that on $\H^2_\dR(\overline{\sX}_L/L)$. 
    
    Up to enlarging $N$, we may assume: 
    \begin{itemize}
        \item The direct summand $\NS(\sX_\eta) \tensor_\IZ \sO_{\sT_L} \subseteq \sH_{\dR, L}$ extends to a direct summand $\sH^0_\dR$ of $\sH_\dR$ such that the natural map $\H^0(\sH^0_\dR)^{\nabla = 0} \tensor \sO_{\sT} \to \sH^0_\dR$ is an isomorphism. 
        \item The direct summand $\H^0(\sH_{\dR, L})^{\nabla = 0} \tensor \sO_{\sT_L} \subseteq \sH_{\dR, L}$ extends to an orthogonal direct summand (as a filtered flat vector bundle) $\sH_\dR$, which we denote by $\sH^f_\dR$. 
        \item $\sH^0_\dR$ is an orthogonal direct summand  (as a filtered flat vector bundle) of $\sH^f_\dR$. 
    \end{itemize}
    Finally, we may take $\sH_\dR^1$ to be the orthogonal complement of $\sH^0_\dR$ in $\sH^f_\dR$. 


    (c) By the equivalence \cite[Cor.~6.8]{NO}, each piece $\sH^i_\dR|_{\hat{\sT}_\fp}$, as a vector bundle with an integrable quasi-nilpotent connection, gives rise to a direct summand $\sH^i_{\fp, \cris}$. We need to check that the decomposition $\sH_{\fp, \cris} = \oplus_{i = 0}^2 \sH^i_{\fp, \cris}$ respects the Frobenius structure, i.e., the canonical Frobenius action $F : F_{\sT_\fp/ \hat{\sO}_{L,  \fp}}^* \sH_{\fp, \cris} \to \sH_{\fp, \cris}$ sends each $F_{\sT_\fp/ \hat{\sO}_{L,  \fp}}^* \sH^i_{\fp, \cris}$ into $\sH^i_{\fp, \cris}$. Since this statement can be checked after inverting $p$, by \cite[Thm~4.1]{OgusDuke} it suffices to show the statement at one closed point on $\sT_\fp$. Therefore, it suffices to note that the Frobenius action on $\H^2_\cris(\sX_{b_\fp} / \hat{\sO}_{L, \fp}) = \sH_{\fp, \cris}|_{b_\fp}$ does send each $\sH^i_{\fp, \cris}|_{b_\fp}$ into $\sH^i_{\fp, \cris}|_{b_\fp}$. This is clear for $i = 0$; moreover, using the crystalline-de Rham comparison isomorphisms one checks that $(\sH^0_{\fp, \cris} \oplus \sH^1_{\fp, \cris})|_{b_\fp} \subseteq \H^2_\cris(\sX_{b_\fp} / \hat{\sO}_{L, \fp})$ is the image of $\H^2_\cris(\overline{\sX}_\fp / \hat{\sO}_{L, \fp}) \to \H^2_\cris(\sX_{b_\fp}/\hat{\sO}_{L, \fp})$ induced by $\iota \tensor k(\fp)$, and hence is stable under the Frobenius action. 

    Next, we check that $(\sH^i_{\fp, \cris}, \sH^i_\dR|_{\hat{\sT}_\fp})$ is a constant filtered F-crystal for $i = 0, 1$. This is clear for $i = 0$. Since $\sH^1_{\fp, \cris}$ is an F-crystal on $\Cris(\sT_\fp/\hat{\sO}_{L, \fp})$, its module of global sections $\H^0(\sH^1_\cris)$ is an F-crystal on $\Cris(k(\fp)/\hat{\sO}_{L, \fp})$, so that there is a natural morphism 
    \begin{equation}
    \label{eqn: cris trivialization}
        \H^0((\sT_\fp/\hat{\sO}_{L, \fp})_\cris, \sH^1_{\fp, \cris}) \tensor \sO_{\sT_\fp / \hat{\sO}_{L, \fp}} \to \sH^1_{\fp, \cris}
    \end{equation} of F-crystals, where $\sO_{\sT_\fp / \hat{\sO}_{L, \fp}}$ denotes the structure sheaf on $\Cris(\sT_\fp / \hat{\sO}_{L, \fp})$. Evaluating at the (pro)-object $\hat{\sT}_\fp \in \Cris(\sT_\fp/ \hat{\sO}_{L, \fp})$, we obtain
    \begin{equation}
        \label{eqn: dR trivialization}
        \H^0(\sH^1_{\dR})^{\nabla = 0} \tensor \sO_{\hat{\sT}_\fp} \to \sH^1_\dR |_{\hat{\sT}_\fp}
    \end{equation}
    which is an isomorphism of flat vector bundles. Therefore, (\ref{eqn: cris trivialization}) is also an isomorphism. Since (\ref{eqn: dR trivialization}) also preserves the filtrations, (\ref{eqn: cris trivialization}) and (\ref{eqn: dR trivialization}) together give a trivialization of the filtered F-crystal when $i = 1$.


    (d) Assume first that $\shP(\sT, s, \xi)$ dominates $\sT$. By \ref{prop: liftability}, we can find a lifting $(\wt{s},  \wt{\xi})$ of $(s, \xi)$ over $\sO_K$, where $K$ is a finite extension of $W(k(s))[1/p]$. Choose an embedding $K \into \IC$. Then $\shP(\sT, \wt{s}_\IC, \wt{\xi}_\IC)$ dominates $\sT_\IC$. Using \ref{lem: generic LB}(d), we infer that $c_{1, \dR}(\wt{\xi}_\IC)$ extends to an element of $\H^0(\IV^0)$. This implies that $c_{1, \dR}(\wt{\xi}) \in \sH^0_{\dR, \wt{s}} \subseteq \sH_{\dR, \wt{s}}$, and hence $c_{1, \cris}(\xi) \in \sH^0_{\fp, \cris}|_{s}$. Conversely, suppose that $c_{1, \cris}(\xi) \in \sH^0_{\fp, \cris}|_{s}$. To show that $\shP(\sT, s, \xi)$ dominates $\sT$ it suffices to show that $\xi$ deforms to the formal completion of $\sT \tensor W(k(s))$ at $s$. This follows from \cite[Prop.~1.12]{OgusCrystals} and the fact that $\sH^0_{\fp, \cris}$ is a trivial F-crystal and $\sH^0_{\dR} = \Fil^1 \sH^0_{\dR}$.

    The argument for the statement about $\sT_\fp$ is entirely similar. Assume now that $\shP(\sT_\fp, s, \xi)$ dominates $\sT_\fp$. Then using \ref{lem: generic LB}(d) again we infer that $c_{1, \cris}(\xi)$ extends as a global section of the isocrystal $\sH_{\fp, \cris}[1/p]$. Therefore, by \cite[Thm~1.4]{Morrow}, for some $m \in \IN$, $\xi^{p^m}$ extends to a line bundle on $\sX_\fp$ and hence on $\overline{\sX}_\fp$. This implies that $c_{1, \cris}(\xi)$ up to a multiple lies in the image of $\H^2_\cris(\overline{\sX}_\fp \tensor k(s) / W(k(s))) \to \H^2_\cris(\sX_s / W(k(s)))$. One easily checks that this image is precisely $(\sH^0_{\fp, \cris} \oplus \sH^1_{\fp, \cris})|_s$, which is a direct summand. Therefore, $c_{1, \cris}(\xi)$ must lie in $(\sH^0_{\fp, \cris} \oplus \sH^1_{\fp, \cris})|_s$ without passing to a multiple. Conversely, assume that $c_{1, \cris}(\xi) \in (\sH^0_{\fp, \cris} \oplus \sH^1_{\fp, \cris})|_s$. Then by \cite[Cor.~1.13]{OgusCrystals} and the fact that $\sH^0_{\fp, \cris} \oplus \sH^1_{\fp, \cris}$ is a constant filtered F-crystal, $\xi$ deforms to the formal completion of $\sT_\fp \tensor k(s)$ at $s$. Hence $\shP(\sT_\fp, s, \xi)$ dominates $\sT_\fp$. 
\end{proof}

\subsection{Non-liftable Line Bundles}
\label{sec: non-liftable}
In this section, we study the $h^{0, 2} = 2$ case in detail and prove \ref{thm: h = 2 case rough}. Again, apply our set-up and notations in \S\ref{sec: general setup} to the case when $R = \sO_L[1/N]$ for some number field $L \subseteq \IC$ and $N \in \IN$. Assume in addition that $h^{0, 2} = 2$ for the fibers of the family and for every (not just a general) geometric point $s \to \sS$, the Kodaira-Spencer map $$T_s \sS_{k(s)} \to \Hom(\H^1(\Omega_{\sX_s}), \H^2(\sO_{\sX_s}))$$ has rank at least $h^{0, 2}(\sX_s) = 2$. Let $\sE$ be the proper subscheme of $\sS$ obtained by applying \ref{multicharcodimlem}. Up to increasing $N$, assume that every irreducible components of $\sE$ have non-empty generic fiber over $\sO_L[1/N]$ and denote them by $\sE_i$'s. Note that the same $\sE$ also appeared in \ref{sec: actual proof of main thms}. 



\begin{remark}
    \label{rmk: KS on V^2} Our assumption on the Kodaira-Spencer map means that $\sE = E$ in \ref{multicharcodimlem} and \ref{Econstrprop} up to increasing $N$. In particular, we have 
    \[ \dim \sE_{i,\IC} \geq d = \dim_{R} \sS - (h^{0,2} - 1) = \dim_{R} \sS - 1 \]
    by \ref{Econstrprop}(ii) for all $i$. But $\dim \sE_{i,\IC} < \dim_{R} \sS$, so all the components $\sE_{i,\IC}$ have codimension $1$ in $\sS_{\IC}$. Note also that the restriction of $\IV$ to (the smooth locus of) $\sE_{i, \IC}$ is non-isotrivial.
\end{remark}

\subsubsection{} \label{sec: setup 2+} Consider in addition the following conditions. 
\begin{enumerate}[label=\upshape{(\Roman*)}]
    \item Each $\sE_i$ has geometrically connected fibers over $\sO_L[1/N]$. 
    \item There exists a scheme $\wt{\sE}_i$ smooth over $\sO_L[1/N]$ with geometrically connected fibers such that $\wt{\sE}_i$ admits a surjective and generically \'etale morphism to $\sE_i$. 
    \item When the base $\sT$ considered in \ref{sec: setup 2} is taken to be $\wt{\sE}_i$, the restricted family $(\sX/\sS)|_{\wt{\sE}_i}$ satisfies the assumptions in \ref{sec: setup 2} and the conclusion of \ref{prop: spreading out splitting} holds.
\end{enumerate}

We claim that the above conditions can always be satisfied up to replacing $L$ by a finite extension in $\IC$ and increasing $N$. First, up to extending $L$, we may assume that each $\sE_{i, L}$ is geometrically connected, so that (I) holds up to increasing $N$ by \cite[055H]{stacks-project}. Next, to construct $\wt{\sE}_i$ which satisfies (II) and (III), we first apply Hironaka's resolution of singularities to find a smooth $\sE'_{i, L}$ admiting a proper birational map to $\sE_{i, L}$, then we find a connected \'etale cover $\wt{\sE}_{i, \IC}$ of $\sE'_{i, \IC} := \sE'_{i, L} \tensor \IC$ so that \ref{sec: setup 2}(iii) is satisfied, i.e., $\Mon(\IV_B|_{\wt{\sE}_{i, \IC}}, s)$ is connected for some (and hence every) $s \in \wt{\sE}_{i, \IC}$. Then we apply the argument of \ref{lem: replacement}: Up to replacing $L$ by a finite extension in $\IC$, $\wt{\sE}_{i, \IC}$ is defined over $L$, i.e., $\wt{\sE}_{i, \IC} = \wt{\sE}_{i, L} \tensor \IC$ for some \'etale cover $\wt{\sE}_{i, L}$ of $\sE_{i, L}$; moreover, we may assume that $\wt{\sE}_{i, L}(L) \neq \emptyset$. Up to replacing $\wt{\sE}_{i, L}$ (and hence also $\wt{\sE}_{i, \IC}$) by a connected \'etale cover, \ref{sec: setup 2}(iv) is satisfied, i.e., $\Mon(\sH_\ell|_{\wt{\sE}_{i, L}}, s)$ is connected for some (and hence every) geometric point $s$ on $\wt{\sE}_{i, L}$ for every prime $\ell$. Up to increasing $N$ and applying \cite[055H]{stacks-project} again, $\wt{\sE}_{i, L} \to \sE_{i, L}$ spreads out to a morphism $\wt{\sE}_i \to \wt{\sE}$ which satisfies (II) above. Finally, the rest of assumptions in \ref{sec: setup 2} when $\sT$ there is taken to be $\wt{\sE}_i$ can easily be satisfied up to increasing $N$ again, so that (III) is met.

\begin{proposition}
\label{prop: only non-liftable situation} Assume that the conditions in \ref{sec: setup 2+} are satisfied. Let $\fp \subseteq \sO_L[1/N]$ be a finite prime. We denote by $\sH_{\fp, \cris}|_{\wt{\sE}_i} = \oplus_{j = 0}^2 \sH^j_{i, \fp, \cris}$ the decomposition provided by \ref{prop: spreading out splitting}(c) when $\sT$ is taken to be $\wt{\sE}_i$. Then following are equivalent for a LB-pair $(s, \xi)$ on $\sX/\sS$ when $s$ lies above $\sS_\fp$. 
    \begin{enumerate}[label=\upshape{(\roman*)}]
        \item There exists an $\sE_i$ and a lift $u \to \wt{\sE}_i$ lifting $s$ such that, identifying $\sX_s$ with $\sX_u$, $c_{1, \cris}(\xi) \in \H^2_\cris(\sX_u/W(k(s)))$ lies in $(\sH^0_{i, \fp, \cris} \oplus \sH^1_{i, \fp, \cris})|_u$ but not in $\sH^0_{i, \fp, \cris}|_u$. 
        \item The pair $(s, \xi)$ is not strongly liftable in the sense of \ref{def: localizations}. In particular, $\shP(\sS, s, \xi)$ is not flat. 
    \end{enumerate}
\end{proposition}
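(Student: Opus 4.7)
The plan is to use \ref{prop: spreading out splitting}(d), applied to the base $\sT = \wt{\sE}_i$, as the main dictionary: under it, $c_{1,\cris}(\xi) \in (\sH^0_{i,\fp,\cris} \oplus \sH^1_{i,\fp,\cris})|_u$ is equivalent to $\xi$ deforming along $(\wt{\sE}_i)_\fp$ near $u$, while $c_{1,\cris}(\xi) \in \sH^0_{i,\fp,\cris}|_u$ is equivalent to $\xi$ lifting along all of $\wt{\sE}_i$ to characteristic zero. Passing to $\sS$ via the surjection $\wt{\sE}_i \to \sE_i \hookrightarrow \sS$, condition (i) reads: $\xi$ admits a char-$p$ deformation along $\sE_{i,\fp}$ at $s$ but no char-$0$ deformation along $\sE_i$. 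Throughout I write $n := \dim_R \sS$, $W := W(k(s))$, $K_0 := W[1/p]$, and let $\hat{\sE}_{i,\fp,s}$ (respectively $\hat{\sE}_{i,s}$) denote the formal completion of $\sE_{i,\fp}$ (respectively $\sE_i$) at $s$.

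For (i) $\Rightarrow$ (ii) the plan is to argue by contradiction. Assuming $\Def(\xi)$ flat over $W$, the fact that $\Def(\xi)_k \supseteq \hat{\sE}_{i,\fp,s}$ has codim $\le 1$ in $\hat{\sS}_{s,k}$ (dim $n$), combined with the bound of $h^{0,2} = 2$ defining equations from \ref{thm: Deligne generalized}, forces $\Def(\xi)$ to be cut out by at most one equation in $\hat{\sS}_s$ (dim $n+1$), so $\Def(\xi)_{K_0}$ has codim $\le 1$ in $\hat{\sS}_{s,K_0}$. Codim $0$ places $c_{1,\cris}(\xi)$ in $\sH^0|_s \subseteq \sH^0_{i,\fp,\cris}|_u$, contradicting (i). Codim $1$ produces a char-$0$ NL component of codim $1 < h^{0,2}$, hence atypical and contained in $\sE_L$ via the construction of $\sE$ from \ref{Econstrprop}; flatness of $\Def(\xi)$ identifies the closure of this component with $\hat{\sE}_{i,s}$ locally at $s$, producing a char-$0$ lift of $\xi$ along $\wt{\sE}_i$ and thereby placing $c_{1,\cris}(\xi) \in \sH^0_{i,\fp,\cris}|_u$ by \ref{prop: spreading out splitting}(d), again contradicting (i).

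For (ii) $\Rightarrow$ (i) I would first reduce to $\xi$ power-primitive. Using \ref{lem: primitive line bundle}(c), write $m\xi = \xi_0 + p^h \xi'$ with $m$ prime to $p$, $\xi_0$ extending globally (so that $\Def(\xi_0) = \hat{\sS}_s$ by \ref{globalimagelem2}), and $\xi'$ power-primitive. Since $\Fil^1 \sH$ is a direct summand (hence $p^h$-saturated), combined with \ref{lem: Def(xi)}, one obtains $\Def(\xi) = \Def(\xi')$, and the same saturation makes condition (i) invariant under replacing $\xi$ with $\xi'$. The contrapositive of \ref{cor: power-primitive case} then forces $s \in \sE$. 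Since $\Def(\xi) \subseteq \hat{\sS}_s$ is cut by at most two equations, its irreducible components have dimension $\ge n-1$, and non-flatness supplies a vertical irreducible component $V_0 \subseteq \Def(\xi)_{\red}$. Applying \ref{multicharcodimlem} at a generic point of $\sS_k$ (using the K-S rank assumption together with power-primitivity to rule out a globally deforming Chern class) excludes the case $V_0 = \hat{\sS}_{s,k}$, so $V_0$ has dimension $n-1$; applying \ref{multicharcodimlem} at a generic point of the Zariski closure $\overline{V_0} \subseteq \sS_k$ forces $\overline{V_0} \subseteq \sE_k$, and matching codim-$1$ divisors locally at $s$ yields $V_0 = \hat{\sE}_{i,\fp,s}$ for some component $\sE_i$ (flat over $\sO_L[1/N]$ by \ref{Econstrprop}(i) and \ref{lem: Zariski closure is flat} after enlarging $N$). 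Lifting $s$ to $u \in \wt{\sE}_i$ via surjectivity of $\wt{\sE}_i \to \sE_i$, condition (i)(a) follows by pulling back the deformation of $\xi$ along $\sE_{i,\fp}$ through $\wt{\sE}_i \to \sE_i$ and invoking \ref{prop: spreading out splitting}(d). For (i)(b), if $c_{1,\cris}(\xi) \in \sH^0_{i,\fp,\cris}|_u$, then \ref{prop: spreading out splitting}(d) yields a char-$0$ lift of $\xi$ along $\wt{\sE}_i$, hence along $\sE_i$ near $s$, so $\hat{\sE}_{i,s} \subseteq \Def(\xi)$; but $\hat{\sE}_{i,s}$ is flat over $W$ of dimension $n$ and strictly contains the codim-$1$ vertical $V_0 = \hat{\sE}_{i,\fp,s}$, contradicting $V_0$'s being a maximal irreducible subset of $\Def(\xi)_{\red}$.

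The main obstacle will be reconciling the formal-geometric structure of $\Def(\xi)$ at $s$ with the global arithmetic geometry of the components of $\sE$: pinning down $V_0 = \hat{\sE}_{i,\fp,s}$ requires flatness and geometric irreducibility of the fibers of $\sE_i$ (arranged by \ref{sec: setup 2+}), and the proof leans on the Geometric Zilber-Pink content through \ref{Econstrprop} in characteristic zero and through \ref{multicharcodimlem} in positive characteristic. A secondary subtlety is the power-primitive reduction, which hinges on the direct-summand structure of $\Fil^1 \sH$ in order to saturate $p^h$-multiples of Chern classes.
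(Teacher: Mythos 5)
Your overall architecture is the same as the paper's (use \ref{prop: spreading out splitting}(d) as the dictionary, use \ref{multicharcodimlem} to pin the vertical part of $\Def(\xi)$ to codimension $1$, and identify it with $\hat{\sE}_{i,\fp,s}$), but there are two genuine gaps. First, in (i) $\Rightarrow$ (ii) you argue by contradiction from ``$\Def(\xi)$ is flat over $W$''. The negation of (ii) is \emph{strong liftability}, which is strictly weaker than flatness (the remark after \ref{def: localizations} gives only the implication flat $\Rightarrow$ strongly liftable; embedded $p$-torsion can destroy flatness without destroying density of the characteristic-$0$ fibre). So as written you only prove the parenthetical ``in particular'' clause of (ii), not (ii) itself. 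The repair is either to observe that strong liftability forces every minimal prime of $\Def(\xi)$ to have characteristic $0$, so that $\Def(\xi)_{\red}$ is $W$-flat and your dimension count still runs, or to do what the paper does: exhibit an explicit point $(t,\zeta)$ of $\shP(\sS,s,\xi)$ over the generic point of $\sE_{i,\fp}$ (using that $\sH^0_{i,\fp,\cris}\oplus\sH^1_{i,\fp,\cris}$ is a \emph{constant} filtered F-crystal together with \cite[Cor.~1.13]{OgusCrystals}) and show directly that no characteristic-$0$ point of $\shP_{\sX/\sS}$ specializes to it.

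Second, and more seriously, the claimed identity $\Def(\xi)=\Def(\xi')$ in your reduction to the power-primitive case is false whenever $h\ge 1$ in the decomposition $m\xi=\xi_0+p^h\xi'$. Saturation of $\Fil^1\sH$ holds only over $p$-torsion-free test rings: for $A\in\mathsf{Art}_W$ with $pA=0$ the obstruction class $p^h\,c_{1,\cris}(\xi')_z$ vanishes identically in the free $A$-module $\bH_A/\Fil^1\bH_A$, so $\Def(p^h\xi')$ contains the \emph{entire} special fibre $\hat{\sS}_{s,k}$ and strictly contains $\Def(\xi')$ (this is the familiar fact that $p$-th powers of line bundles deform unobstructedly along characteristic-$p$ thickenings). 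Since strong liftability of $(s,\xi)$ is governed by the components of $\Def(\xi)=\Def(p^h\xi'+\xi_0)$ and not of $\Def(\xi')$, the reduction does not transfer condition (ii) between $\xi$ and $\xi'$; note that the paper's own use of this decomposition in \S\ref{sec: actual proof of main thms} is careful to invoke only the one inclusion $\Def(p^h\xi')\supseteq\Def(\xi')$. The paper's proof of this proposition avoids the power-primitive reduction altogether: it works directly with $\Def(\xi)$, rules out codimension $2$ for $\Def(\xi)_k$ by miracle flatness, and then feeds the resulting codimension-$1$ LB-pair over the generic point of its closure into \ref{multicharcodimlem} via \ref{rmk: primitive implies general}. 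If you want to keep your reduction you must separately dispose of the case $\Def(\xi)_k=\hat{\sS}_{s,k}$ (equivalently $h\ge 1$), which your ``saturation'' step currently conflates with the power-primitive case.
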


\begin{proof} Set $k:= k(s)$, $p := \mathrm{char\,}k$, and $W := W(k)$.

    (i) $\Rightarrow$ (ii): Let $\eta_{i, \fp}$ be the generic point of $\sE_{i, \fp}$. We first claim that the image of $\shP(\sS, s, \xi)$ on $\sS$ contains $\eta_{i, \fp}$. Let $\Delta$ be the formal completion of $\wt{\sE}_{i, \fp} \tensor_{k(\fp)} k$ at $u$. Then $\Delta = \Spf(\sO_\Delta)$ for a formal power series ring $\sO_\Delta$ over $k$. As $(\sH^0_{i, \fp, \cris} \oplus \sH^1_{i, \fp, \cris})$ is a constant filtered F-crystal over $\wt{\sE}_{i, \fp}$ by \ref{prop: spreading out splitting}(c), \cite[Cor.~1.13]{OgusCrystals} implies that $\xi$ extends to $\Delta$. Therefore, we can find an LB-pair $(t, \zeta)$ such that as an alg-geometric point on $\shP_{\sX/\sS}$, $(t, \zeta)$ specializes to $(s, \xi)$, and $t \to \sS$ factors through $\eta_{i, \fp}$. 
    
    Next, we argue that $(t, \zeta)^\flat \in \shP(\sS, s, \xi)$ does not lie in the Zariski closure of any characteristic $0$ point of $\shP(\sS, s, \xi)$. Suppose on the contrary that there exists an LB-pair $(\wt{t}, \wt{\zeta})$ in characteristic $0$ which specializes to $(t, \zeta)$. Since $t^\flat \in \sS$ is just $\eta_{i, \fp}$, it is easy to see that the Zariski closure of $\wt{t}^\flat$ in $\sS$ is of codimension $1$ by \cite[0B2J]{stacks-project}. Note that $\wt{t}^\flat \in \sS$ cannot be the generic point of $\sE_{i}$. Indeed, otherwise $\shP(\wt{\sE}_{i}, u, \xi)$ dominates $\wt{\sE}_i$, which by \ref{prop: spreading out splitting}(d) implies that $\xi \in \sH^0_{i, \fp, \cris}|_u$ and hence contradicts (i). By considering dimensions of Zariski closures, it is clear that $\wt{t}^\flat \in \sS$ cannot be the generic point of any $\sE_j$ for $j \neq i$, so we must have that $\wt{t}^\flat \not\in \sE$. However, this implies that the deformation space $\Def(\zeta)$ of $\zeta$ in the formal completion of $\sS_{k(\wt{t})}$ at $t$ has codimension $1$ and $\wt{t} \not\in \sE_{k(\wt{t})}$. This contradicts \ref{multicharcodimlem}.

    (ii) $\Rightarrow$ (i): Consider the deformation space $\Def(\xi)_k$ (resp. $\Def(\xi)$) in the formal completion $\hat{\sS}_{s, k}$ (resp. $\hat{\sS}_s$) of $\sS \tensor k$ (resp. $\sS \tensor W$) at $s$. Recall that if $\Def(\xi)_k \subseteq \hat{\sS}_{s, k}$ has codimension exactly $2$, then $\Def(\xi)$ is flat over $W$ by miracle flatness \cite[00R4]{stacks-project}, which will imply that $(s, \xi)$ is strongly liftable. Therefore, we must have that $\Def(\xi)_k$ has codimension $1$ in $\hat{\sS}_{s, k}$. In turn, this implies that there exists an LB-pair $(t, \zeta)$ which specializes to $(s, \xi)$ such that the Zariski closure $Z$ of $t^\flat$ in $\sS$ lies in $\sS_\fp$ and has codimension $1$ in $\sS_\fp$. If we form $\Def(\zeta)_{k(t)}$ inside $\hat{\sS}_{t, k(t)}$ analogously, then $\Def(\zeta)_{k(t)}$ must have codimension $1$. By \ref{multicharcodimlem} (see also \ref{rmk: primitive implies general}) we must have $t \in \sE_{k(t)}$. Therefore, the Zariski closure of $t^\flat$ in $\sS$ must be $\sE_{i, \fp}$ for some $\sE_i$. Let $u \in \wt{\sE}_{i, \fp}(k)$ be any lift of $s$. Then $\shP(\wt{\sE}_{i, \fp}, u, \xi)$ dominates $\wt{\sE}_{i, \fp}$, but $\shP(\wt{\sE}_i, u, \xi)$ does not dominate $\wt{\sE}_i$. Hence the conclusion follows from \ref{prop: spreading out splitting}(d). 
\end{proof}

Finally, we are ready to prove \ref{thm: h = 2 case rough}.
\begin{theorem}
\label{thm: h = 2 case}
Consider the setting in \ref{thm: h = 2 case rough}. Namely, let $L \subseteq \IC$ be a number field, $N \in \IN_{> 1}$, $\sS$ be a smooth quasi-projective $\sO_L[1/N]$-scheme with $\sS_\IC$ connected, and let $f : \sX \to \sS$ be a smooth projective family with geometrically connected fibers. Assume the following conditions. 
\begin{itemize}
    \item $f_\IC$ has large monodromy. 
    \item For every $s \in \sS(\IC)$, $h^{0, 2}(\sX_s) = 2$, and the Kodaira-Spencer map $T_s \sS_\IC \to \Hom(\H^1(\Omega_{\sX_s}), \H^2(\sO_{\sX_s}))$ has rank $\ge 2$. 
\end{itemize}
Then up to increasing $N$, the following holds: given any LB-pair $(s, \xi)$ on $\sX/\sS$ with $k := k(s)$, there exists another LB-pair $(s', \xi')$ over $k$ with the following properties. 
    \begin{enumerate}[label=\upshape{(\roman*)}]
        \item There exists a smooth connected $k$-curve $C$ equipped with a morphism $C \to \sS$, a relative line bundle $\bxi \in \shP_{\sX_C/C}(C)$, and geometric points $c, c'$ on $C$ lifting $s, s'$, such that identifying $\sX_{c}$ with $\sX_{s}$ and $\sX_{c'}$ with $\sX_{s'}$ we have $\bxi_c \sim_\IQ \xi$ and $\bxi_{c'} \sim_\IQ \xi'$; 
        \item There exist $\xi'_m \in \Pic(\sX_{s'})$ for $m = 1, 2$ such that $\xi' = \xi'_1 + \xi'_2$ and each $(s', \xi_m')$ is liftable. 
    \end{enumerate}
\end{theorem}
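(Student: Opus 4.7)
The plan is to reduce to the content of Proposition \ref{prop: only non-liftable situation} and then to invoke the Lefschetz $(1,1)$-theorem applied to the constant sub-VHS $\IV^f = \IV^0 \oplus \IV^1$ on $\wt{\sE}_i$, after enlarging $N$ so that the full strength of \S\ref{sec: setup 2+} is available.

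First I would dispose of the trivial case: if $(s,\xi)$ is already liftable, I take $(s',\xi') = (s,\xi)$, $C$ any smooth $k$-curve with $c = c'$ mapping to $s$, $\xi'_1 = \xi$ and $\xi'_2 = \sO_{\sX_s}$. Otherwise, $(s,\xi)$ fails to be strongly liftable, and Proposition \ref{prop: only non-liftable situation} provides an irreducible component $\sE_i \ni s$ together with a lift $u \in \wt{\sE}_i(k)$ such that $c_{1,\cris}(\xi) \in (\sH^0_{i,\fp,\cris} \oplus \sH^1_{i,\fp,\cris})|_u$ but not in $\sH^0_{i,\fp,\cris}|_u$. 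The key structural input is that, by Proposition \ref{prop: spreading out splitting}(c), the direct sum $\sH^0_{i,\fp,\cris} \oplus \sH^1_{i,\fp,\cris}$ is a constant filtered F-crystal on $\wt{\sE}_i$, so the Chern class of $\xi$ is horizontal along all of $\wt{\sE}_i$.

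Next I would produce algebraic representatives of the $\IV^1$ classes. Since $\IV^f$ on $\wt{\sE}_{i,\IC}$ is a constant sub-VHS of weight $2$ by the Theorem of the Fixed Part, and codimension-$1$ atypicality in the $h^{0,2} = 2$ setting forces the $(1,1)$-part of $\IV^f$ to strictly contain $\IV^0$ (cf.\ \ref{rmk: KS on V^2}), fiberwise Lefschetz $(1,1)$ gives rational Hodge classes in $\IV^1$ represented by algebraic cycles. Spreading these out and possibly further enlarging $N$, I obtain relative line bundles $\mu_1,\ldots,\mu_r$ on $\sX_{\wt{\sE}_i}/\wt{\sE}_i$ whose restrictions together with global $\sS$-line bundles span $\NS(\sX_{t'})_\IQ \cap (\sH^0 \oplus \sH^1)|_{t'}$ at a generic alg-geometric point $t'$ of $\wt{\sE}_{i,\fp}$, up to finite index. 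Each such $\mu_j$ is liftable by Proposition \ref{prop: liftability} because $\wt{\sE}_i$ has characteristic-zero fibers.

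Then I choose a smooth connected $k$-curve $C \subseteq \wt{\sE}_{i,\fp}$ through $u$ and a sufficiently general $u' \in C(k)$; set $s'$ to be the image of $u'$ in $\sS$. The constancy of $\sH^0 \oplus \sH^1$ together with \cite[Prop.~1.12]{OgusCrystals} applied to the restriction of $\xi$ along $C$ furnishes a relative line bundle $\bxi \in \shP_{\sX_C/C}(C)$ (possibly after replacing $\xi$ by an integer multiple to clear torsion in $\shP_{\sX_C/C}(C) \to \Pic(\sX_u)$), with $\bxi_c \sim_\IQ \xi$. Writing $\xi'' := \bxi_{c'}$, the crystalline Chern class $c_{1,\cris}(\xi'')$ lies in $(\sH^0 \oplus \sH^1)|_{u'}$, and by the generic choice of $u'$ one has a rational expression $c_{1,\cris}(\xi'') = c_0 + c_1$ where $c_0$ is the class of some multiple $\nu_0$ of a global $\sS$-line bundle restricted to $u'$, and $c_1$ is the class of an integer combination $\nu_1$ of $\mu_j|_{u'}$. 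Taking $N$ so that $N \xi'' \sim \nu_0 + \nu_1$ in $\NS(\sX_{s'})$ and setting $\xi' := N\xi''$, $\xi'_1 := \nu_0$, $\xi'_2 := \xi' - \xi'_1$ (which is algebraically equivalent to $\nu_1$, hence realized by a liftable line bundle via \ref{prop: liftability}) completes the proof.

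The main obstacle will be the integrality of the decomposition in the last step: a priori we only obtain a $\IQ$-decomposition of $c_{1,\cris}(\xi'')$, and the difference between the $\IZ$-span of the $\mu_j|_{u'}$ plus the global $\sS$-classes and the full lattice $\NS(\sX_{u'}) \cap (\sH^0 \oplus \sH^1)|_{u'}$ must be controlled; passing to $\xi' = N \xi''$ for $N$ sufficiently divisible and exploiting the flexibility afforded by the $\sim_\IQ$ condition in (i) resolves this, but it requires verifying that the multiplied class remains representable by a genuine line bundle (using that $\Pic(\sX_{s'}) \to \NS(\sX_{s'})$ is surjective over the algebraically closed $k$) and that the chosen $\nu_0,\nu_1$ can be adjusted by finite-index correction without leaving the liftable locus.
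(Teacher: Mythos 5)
There is a genuine gap, and it sits at the heart of your argument: the claim that fiberwise Lefschetz $(1,1)$ applied to the constant sub-VHS $\IV^f = \IV^0 \oplus \IV^1$ produces relative line bundles $\mu_1, \dots, \mu_r$ spanning the $\IV^1$-directions is false. By construction $\IV^0$ is exactly the span of the Néron--Severi classes of the generic fibre, so any global section of $\IV_B|_{\wt{\sE}_{i,\IC}}$ that is everywhere of type $(1,1)$ already lies in $\IV^0$; consequently, whenever $\IV^1_i \neq 0$ one has $h^{2,0}(\IV^1_i) > 0$ and $\IV^1_i$ contains \emph{no} nonzero Hodge classes. This is precisely why pairs $(s,\xi)$ with $c_{1,\cris}(\xi) \in (\sH^0_{i,\fp,\cris}\oplus\sH^1_{i,\fp,\cris})|_u \setminus \sH^0_{i,\fp,\cris}|_u$ fail to be liftable in the first place: there is nothing algebraic in characteristic zero sitting over the $\IV^1$-direction. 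Your $\mu_j$ do not exist, and even granting the decomposition $c_{1,\cris}(\xi'') = c_0 + c_1$, both proposed summands would still have crystalline Chern classes inside $\sH^0 \oplus \sH^1$ and outside $\sH^0$, hence would again be non-liftable by \ref{prop: only non-liftable situation}. Your choice of a \emph{generic} $u'$ on $C$ makes this unavoidable: by Ogus's rigidity theorem the relative class of $\bxi$ stays in the constant summand $\sH^0\oplus\sH^1$ along all of $\wt{\sE}_{i,\fp}$, so genericity buys you nothing.

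The missing idea is that $s'$ must be a \emph{special} point, not a general one. The paper proves a claim that, since $\IV^2_i$ is non-isotrivial with $h^{2,0}(\IV^2_i) = 1$ (the other unit of $h^{2,0}=2$ being absorbed by $\IV^1_i \neq 0$), the density of the Noether--Lefschetz locus for weight-two VHS with $h^{2,0}=1$ (\cite[Prop.~6.4]{Moonen}) yields a point $u'_\IC \in \wt{\sE}_i(\IC)$ carrying an extra line bundle $\zeta_\IC$ whose class lies in $\IV^2_i$; specializing gives $\zeta$ with $c_{1,\cris}(\zeta) \in \sH^2_{i,\fp,\cris}|_{u'}$ at a point $u'$ in characteristic $p$. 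One then sets $\xi' = \wt{\bxi}_{u'}$, $\xi'_1 = \zeta$ and $\xi'_2 = \wt{\bxi}_{u'} - \zeta$: both summands have a nonzero component in $\sH^2_{i,\fp,\cris}|_{u'}$, so by the equivalence in \ref{prop: only non-liftable situation} both are (strongly) liftable. Your overall frame --- reducing to \ref{prop: only non-liftable situation}, transporting $\xi$ along a curve in $\wt{\sE}_{i,\fp}$ via a relative line bundle, and handling $\sim_\IQ$ rather than equality --- matches the paper, but without the density-of-NL input at a special $u'$ the decomposition in (ii) cannot be achieved.
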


\begin{proof}
    To prove the theorem, we are allowed to make replacement as in \ref{lem: replacement} so that the family $f : \sX \to \sS$ satisfies the assumptions in \S\ref{sec: general setup}. Moreover, up to replacing $L$ by a further finite extension in $\IC$ and increasing $N$, the family satisfies the assumptions in \ref{sec: setup 2+}, so that we are in the setting of \ref{prop: only non-liftable situation}. Let $\IV|_{\wt{\sE}_{i, \IC}} = \oplus_{j = 0}^2 \IV^j_i$ be the decomposition obtained by applying the set-up in \ref{sec: setup 2} when the base $\sT$ therein is taken to be $\wt{\sE}_i$. We make the following claim. 

    \vspace{1em}

    \noindent \textit{Claim: Up to increasing $N$, the following holds: For every $i$ such that $\IV^1_i \neq 0$ and for every finite prime $\fp \subseteq \sO_L[1/N]$, there exists a point $s'_0 \in \sE_{i, \fp}(\overline{k(\fp)})$ such that there exists a lift $u'_0 \in \wt{\sE}_{i, \fp}(\overline{k(\fp)})$ such that, identifying $\sX_{s'_0}$ with $\sX_{u'_0}$, there exists a line bundle $\zeta \in \Pic(\sX_{s'_0})$ for which $c_{1, \cris}(\zeta) \in \sH^2_{i, \fp, \cris}|_{u'_0}$.} 

    \vspace{0.5em}

    \noindent \textit{Proof of claim.} Note first that by \ref{rmk: KS on V^2} $\IV|_{\wt{\sE}_{i, \IC}}$, or equivalently, $\IV^2_{i}$ is a non-isotrivial $\IQ$-VHS. Moreover, we argue that $h^{2, 0}(\IV^2_i) = \mathrm{rank\,} \Fil^2 \IV^2_{i, \dR} = 1$, or equivalently, $h^{2, 0}(\IV^2_i) = 1$. Indeed, any global section of $\IV_B|_{\wt{\sE}_{i, \IC}}$ which is everywhere of type $(1, 1)$ must lie in $\IV^0_i$. Since $\IV^1_i$ is assumed non-zero, $h^{2,0}(\IV^1_i)$ must be nonzero, but since $\IV^2_2$ is not isotrivial, $h^{2,0}(\IV^1_i)$ cannot be $2$. Hence by \cite[Prop.~6.4]{Moonen} there exists some $\IC$-point $u'_\IC$ on $\wt{\sE}_{i}(\IC)$ such that $\IV^2_{i, u'_\IC}$ has a nonzero $(1, 1)$-class, which corresponds to a line bundle $\zeta_\IC \in \Pic(\sX_{u'_\IC})$. Up to increasing $N$, for any $\fp$ we may find some $u'_0 \in \wt{\sE}_{i, \fp}(\overline{k(\fp)})$ lying in the Zariski closure of $(u_\IC')^{\flat}$ in $\wt{\sE}_{i}$ and take $s'_0$ to be the image of $u'_0$. The desired line bundle $\zeta$ is constructed by specializing $\zeta_\IC$.  \qed \vspace{1em}

    Now suppose that we have already increased $N$ so that the claim holds and $(s, \xi)$ is a LB-pair as in the statement of the theorem. If suffices to treat the case when it is not strongly liftable. Let $\fp$ be the image of $s$ on $\Spec(\sO_L[1/N])$. By \ref{prop: only non-liftable situation} and its proof, there exists an $\sE_i$ and a lift $u \in \wt{\sE}_{i, \fp}(k)$ such that $c_{1, \cris}(\xi)$ lies in $(\sH^0_{i, \fp, \cris} \oplus \sH^1_{i, \fp, \cris})|_{u}$ but not in $\sH^0_{i, \fp, \cris}|_{u}$. Note that this in particular implies that $\IV^1_i \neq 0$. Moreover, $\shP(\wt{\sE}_{i, \fp}, u, \xi)$ dominates $\sE_{i, \fp}$. Recall that in the construction of $\wt{\sE}_{i, \fp}$, we assumed that $\mathrm{Mon}(\sH_{i, \ell}, u)$ is connected (this is subsumed into \ref{sec: setup 2+}(III)). By \ref{lem: generic LB}(d), there exists a relative line bundle $\wt{\bxi}$ on $\sX|_{\wt{\sE}_{i, \fp}}$ such that $\wt{\bxi}_u \sim_\IQ \xi$. Fix this $\sE_i$ and take $(s'_0, u'_0, \zeta)$ as above. Let $s'$ and $u'$ be the base change of $s'_0$ and $u'_0$ along some embedding $\overline{k(\fp)} \into k$ determined by $s$. Then $\wt{\bxi}_{u'}$ is a line bundle on $\sX_{u'} = \sX_{s'}$. As $c_{1, \cris}(\bxi_{u}) \in \sH^2_{i, \fp, \cris}|_{u}$, by \cite[Thm~4.1]{OgusDuke} the relative crystalline Chern class of $\bxi$ must be a global section of the direct summand $\sH^2_{i, \fp, \cris}$ in $\sH_{i, \fp, \cris}$, and hence $c_{1, \cris}(\bxi_{u'}) \in \sH^2_{i, \fp, \cris}|_{u'}$ as well. By \ref{prop: only non-liftable situation}, $\zeta$ and $\zeta - \wt{\bxi}_{u'}$ are both line bundles on $\sX_{s'}$ which are strongly liftable because their $c_{1, \cris}$ in $\sH_{i, \fp, \cris}|_{u'}$ has a non-zero component in $\sH^2_{i, \fp, \cris}|_{u'}$. 
    
    To conclude the proof, it suffices to note that by \cite[Cor.~1.9]{Bertini-irred}, we can always find an irreducible $k$-curve $C' \subseteq \wt{\sE}_{i, \fp} \tensor k$ which contains both $u$ and $u'$. Then we may simply take $C$ to be the normalization of $C'$, $\bxi$ to be the restriction of $\wt{\bxi}$ to $C$, and $c, c'$ to be any lifts of $u, u'$ respectively. 
\end{proof}

\section{Applications and Examples}
\label{sec: examples}
We introduce the following notations: Let $B$ be a base scheme and $\sV$ be a vector bundle of rank $r$ over $B$. We denote by $\IA(\sV)$ the associated $\IA^r$-bundle over $B$, $\IA(\sV)^*$ the open part $\IA(\sV)$ minus the zero section, and $\IP(\sV)$ the projectivization of $\sV$. 

\subsection{Elliptic Surfaces}
Let $\pi : X \to \IP^1_k$ be an elliptic surface (with zero section) over a field $k$. Define $L := (R^1 \pi_* \sO_X)^\vee$ to be the fundamental line bundle and set $h := \deg(L)$ to be the \textit{height} of $L$. Note that if $h = 1$, then $X$ is rational, and if $h = 2$, then $X$ is an elliptic K3 surface. In general, we have that the irregularity $q = 0$ and $h = \sX(\sO_X) = p_g - 1$, where $p_g = h^{2, 0}(X)$ is the geometric genus. We say that $X$ is \textit{admissible} if all fibers of $\pi$ are irreducible.


In order to parametrize all $X$ with height $h$, we consider $\IP^1$ over $\IZ[1/6]$ and let $\varpi$ be the structural morphism $\IP^1 \to \Spec(\IZ[1/6])$. Set $\sV_i$ to be the vector bundle over $\IZ[1/6]$ given by $\varpi_* \sO_{\IP^1}(ih)$. Let the coordinates of $\sP := \IA(\sV_4 \oplus \sV_6)^* \times_{\IZ[1/6]} \IP(\sO_{\IP^1}(2h) \oplus \sO_{\IP^1}(3h) \oplus \sO_{\IP^1})$ be denoted by $( (a_4, a_6), [x : y : z])$ and set $\overline{\sX}$ to be the subscheme defined by (cf. \cite[Thm~1]{Kas})
$$ y^2 z = x^3 - a_4 xz^2 - a_6 z^3. $$
Note that the morphism $\sP \to \IA(\sV_4 \oplus \sV_6)^*$ carries a natural action by $\Aut(\IP^1)$ and $\IG_m$, the latter is given by $$ \lambda \cdot ((a_4, a_6), [x : y : z]) \mapsto ((\lambda^4 a_4, \lambda^6 a_6), [\lambda^2 x : \lambda^3 y : z]).  $$
These actions commute and both stabilize $\sP$. Let $\sS \subseteq \IA(\sV_4 \oplus \sV_6)^*$ be the open subscheme over which $\overline{\sX}$ is smooth and $\sX$ be the restriction of $\overline{\sX}$ to $\sS$.  It is well known that a geometric point $s \to \IA(\sV_4 \oplus \sV_6)^*$ factors through $\sS$ if and only if the fiber $\sX_s$ is an admissible elliptic surface. 

The moduli stack $\sfM$ over $\IZ[1/6]$ of admissible elliptic surfaces over $\IP^1$ of height $h$ is given by the quotient of $\sS$ by the action of $\IG_m \times \Aut(\IP^1)$. One easily computes that $\sfM$ has relative dimension $10 h - 2$ over $\IZ[1/6]$. To avoid working with stacks, we consider $\sS$. 

\begin{lemma}
\label{lem: Torelli for elliptic}
    Assume that $h \ge 3$. Then for every $s \in \sS(\IC)$, the Kodaira-Spencer map $T_s \sS_\IC \to \Hom(\H^1(\Omega_{\sX_s}), \H^2(\sO_{\sX_s}))$ has rank $10 h - 2$. 
\end{lemma}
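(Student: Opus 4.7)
The plan is to identify the kernel of the Kodaira--Spencer map with the infinitesimal orbit of the natural group action on $\sS_\IC$, and to promote the resulting upper bound on the rank to an equality via infinitesimal Torelli for elliptic surfaces. First, $\dim_\IC T_s \sS_\IC = h^0(\IP^1_\IC, \sO(4h)) + h^0(\IP^1_\IC, \sO(6h)) = (4h+1) + (6h+1) = 10h + 2$, so proving the rank equals $10h-2$ amounts to exhibiting a $4$-dimensional kernel with no further collapse. The group $G := \IG_m \times \Aut(\IP^1_\IC)$ acts on $\sS_\IC$ preserving the isomorphism class of $\sX_s$, so the orbit tangent space $T_s(G \cdot s) \subseteq T_s \sS_\IC$ lies in the kernel of the Kodaira--Spencer map $T_s \sS_\IC \to \H^1(T_{\sX_s})$, hence a fortiori in the kernel of its composition with the cup product into $\Hom(\H^1(\Omega_{\sX_s}), \H^2(\sO_{\sX_s}))$.

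I would check that $\dim T_s(G \cdot s) = 4$ for every $s \in \sS(\IC)$ by showing that the infinitesimal stabilizer $\mathfrak{g}_s \subseteq \Lie(G)$ is trivial. Any nonzero element of $\mathfrak{g}_s$ would produce a $1$-parameter subgroup of $\mathrm{PGL}_2$ preserving $(a_4, a_6)$ up to the $\IG_m$-twist, which after conjugation in $\mathrm{PGL}_2$ forces $a_4$ and $a_6$ to be weighted monomials in a suitable affine coordinate on $\IP^1_\IC$; but then the discriminant $\Delta = 4 a_4^3 + 27 a_6^2$ vanishes to high order at a single point of $\IP^1_\IC$, producing a reducible singular fibre and contradicting the admissibility of $\sX_s$. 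This gives the upper bound $\mathrm{rank} \leq 10h+2-4 = 10h-2$. The opposite inequality comes from infinitesimal Torelli for elliptic surfaces with a section and $p_g = h - 1 \geq 1$, which in our setting says that for every $[s]$ the induced map $T_{[s]} \sfM \to \Hom(\H^1(\Omega_{\sX_s}), \H^2(\sO_{\sX_s}))$ on the moduli stack $\sfM = [\sS_\IC/G]$ is injective; combined with the previous paragraph (which identifies $T_{[s]} \sfM$ with a $(10h - 2)$-dimensional quotient of $T_s \sS_\IC$) the two bounds match.

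The main obstacle will be securing a pointwise, rather than merely generic, form of infinitesimal Torelli. My first plan is to cite existing work on infinitesimal Torelli for elliptic surfaces of positive geometric genus (e.g.\ the results of Ki\u{i}, Saito, and Chakiris), verifying that their arguments indeed give pointwise injectivity on the entire admissible locus rather than only on a dense open. If a pointwise statement is not cleanly available, a backup is to exploit the hypersurface description of $\sX_s$ inside the $\IP^2$-bundle $\IP(\sO(2h) \oplus \sO(3h) \oplus \sO)$ over $\IP^1_\IC$ and adapt the standard Griffiths--Jacobian-ring proof of infinitesimal Torelli for hypersurfaces in (toric) projective bundles: the Jacobian ideal generated by the partial derivatives of the Weierstrass equation, together with the tangential directions along the $G$-action, should by a Macaulay-type duality argument yield the required injectivity of the period derivative at every $[s]$.
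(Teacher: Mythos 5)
Your overall architecture is reasonable and genuinely different from what the paper does: the paper's proof is a direct appeal to \cite[Thm~2.10]{SB}, which is stated precisely as a rank statement for the period derivative of Weierstrass families, and the only thing the authors verify is its hypothesis on the number of singular fibres (the Euler characteristic of $\sX_s$ is $12h$, each irreducible singular fibre contributes at most $2$, so there are at least $6h$ of them). Your preliminary steps are correct: $\dim T_s\sS_\IC = (4h+1)+(6h+1) = 10h+2$, the $4$-dimensional group $\IG_m \times \Aut(\IP^1)$ acts with finite stabilizers on the admissible locus (your discriminant argument is fine, since admissibility forces $\mathrm{ord}_t(\Delta) \le 2$ at every point while a monomial or translation-invariant $(a_4,a_6)$ concentrates the $12h$ zeros of $\Delta$ at one or two points), and the orbit directions lie in the kernel of Kodaira--Spencer. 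This correctly yields $\mathrm{rank} \le 10h-2$ and reduces the lemma to the injectivity of $T_s\sS_\IC/T_s(G\cdot s) \to \Hom(\H^1(\Omega_{\sX_s}), \H^2(\sO_{\sX_s}))$ at \emph{every} admissible $s$.

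The gap is that this last injectivity is not a routine citation --- it \emph{is} the lemma, and it is exactly what the cited theorem of Shepherd--Barron supplies. The references you name do not obviously cover all of $\sS(\IC)$: Ki\u{\i}'s infinitesimal Torelli theorem for elliptic surfaces requires a non-constant $j$-invariant, Chakiris proves \emph{generic} (not infinitesimal/pointwise) Torelli, and Saito's extension still excludes certain isotrivial configurations. Meanwhile $\sS(\IC)$ genuinely contains isotrivial surfaces: take $a_4 = 0$ and $a_6 \in \H^0(\IP^1_\IC, \sO(6h))$ with only simple zeros, giving a smooth admissible Weierstrass model with constant $j = 0$. Since the lemma is needed for \emph{every} $s$ (the $h=3$ application via the $h^{0,2}=2$ theorem requires the pointwise hypothesis, not a generic one), you cannot discard these points, and your proposal neither verifies that the literature handles them nor carries out the backup Jacobian-ring computation, which for a non-generic hypersurface of this special shape in a projective bundle is a substantive project rather than an adaptation. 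As written, the proposal establishes only the upper bound on the rank and defers the entire lower bound.
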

\begin{proof}
    This is a consequence of \cite[Thm~2.10]{SB}. We just need to explain why the hypothesis there is satisfied: Note that the Euler characteristic of a nodal curve (resp. cuspidal) of $1$ (resp. $2$), and that of $\sX_s$ is $12 \chi(\sO_X) = 12 h$. Therefore, the topological Hurwitz formula tells us that there are at least $6h$ singular fibers. The line bundle $L$ in \textit{loc. cit.} is the same as our $L$, i.e., the fundamental line bundle. 
\end{proof}

\begin{theorem}
There exists an open dense subscheme $U \subseteq \sS$ such that every LB-pair $(s, \xi)$ with $s$ lying on $U$ is liftable. 
\end{theorem}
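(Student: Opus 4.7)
The strategy is to apply Theorem \ref{mainthm1} directly to the family $f : \sX \to \sS$, with base ring $R = \IZ[1/6]$, and set $U := \sS \setminus \sE$ where $\sE \subsetneq \sS$ is the proper closed subscheme provided by that theorem. The hypotheses on $\sS$ (smooth quasi-projective over $\IZ[1/6]$ with $\sS_\IC$ connected) and on $f$ (smooth projective with geometrically connected fibers) are visibly satisfied from the Weierstrass construction, and we equip $\sX/\sS$ with the natural relative polarization given by a positive combination of the zero-section and the fiber class. The content of the proof therefore reduces to verifying the two Hodge-theoretic hypotheses of Theorem \ref{mainthm1} for $f_\IC$: large monodromy and sufficiently large period image. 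The cases $h=1$ (rational, $h^{0,2}=0$, so every line bundle lifts trivially) and $h=2$ (elliptic K3, covered by Deligne \cite{Del02}) can be dispensed with separately, and we focus on $h \geq 3$.

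\emph{Sufficiently large period image:} Lemma \ref{lem: Torelli for elliptic} gives that the Kodaira-Spencer map (\ref{eqn: Kodaira-Spencer}) has rank $10h-2$ at every $s \in \sS(\IC)$, and this is clearly $\geq h^{0,2}(\sX_s) = p_g = h-1$ for all $h \geq 1$. So this hypothesis is amply satisfied.

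\emph{Large monodromy:} We must check that, for a very general $s \in \sS(\IC)$, the Zariski closure of $\pi_1(\sS_\IC, s) \to \mathrm{O}(\mathrm{H}^2(\sX_s,\IQ))$ has identity component containing $\mathrm{SO}(\mathrm{T}^2(\sX_s,\IQ))$. The plan is to specialize inside the Weierstrass family to an elliptic surface whose transcendental lattice is known to carry a full orthogonal monodromy action (for instance via a Lefschetz-type pencil of hypersurfaces in the parameter space, or by exhibiting a sub-pencil whose Picard-Lefschetz vanishing cycles span $\mathrm{T}^2$), and then invoke specialization-of-monodromy to conclude the same for the full family. Alternatively one may appeal to the established literature on monodromy of universal Weierstrass families of elliptic surfaces of fixed height.

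Granting both hypotheses, Theorem \ref{mainthm1} yields a proper closed $\sE \subsetneq \sS$ such that for every algebraically closed field $k$ of positive characteristic and every $s \in \sS(k) \setminus \sE(k)$, each line bundle on $\sX_s$ lifts to characteristic zero in the sense of Proposition \ref{prop: liftability}; in characteristic zero liftability is tautological. Setting $U := \sS \setminus \sE$ thus produces the desired open dense subscheme. The only substantive obstacle is the large-monodromy verification, which is a standard but technical input requiring either an explicit Lefschetz-type computation within the Weierstrass family or a direct appeal to known results on the algebraic monodromy of such families.
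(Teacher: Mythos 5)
Your proposal matches the paper's proof, which is a one-line deduction from Theorem \ref{mainthm1}, Lemma \ref{lem: Torelli for elliptic} (for the sufficiently-large-period-image hypothesis), and a citation to the literature for the large-monodromy hypothesis (the paper invokes the monodromy computation of \cite[Thm~4.10]{JF}, i.e.\ exactly the ``appeal to established results on the monodromy of universal Weierstrass families'' you propose as your alternative route). The only cosmetic difference is that you spell out the reduction and handle $h=1,2$ separately, which the paper leaves implicit.
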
 
\begin{proof}
    This follows from \ref{mainthm1}, \ref{lem: Torelli for elliptic} and the monodromy calculation \cite[Thm~4.10]{JF}. 
\end{proof}

\begin{theorem}
    For $p \gg 0$, the following is true: Let $k$ be an algebraically closed field of characteristic $p$, $X$ be an admissible elliptic surface over $\IP^1$ of height $3$, and $\xi$ be any line bundle on $X$. Then some power of $\xi$ deforms to a sum of two liftable ones. 
    
    More precisely, there exist a smooth connected $k$-curve $C$ and a smooth family $\sX \to C \times \IP^1 \to C$ whose geometric fibers over $C$ are height $3$ elliptic surfaces over $\IP^1$ such that: 
    \begin{enumerate}[label=\upshape{(\roman*)}]
        \item there exist $c \in C(k)$ and $\bxi \in \shP_{\sX/C}(C)$, and an isomorphism $\sX_c \cong X$ through which $\bxi_c \sim_\IQ \xi$; 
        \item for some $c' \in C(k)$, $\bxi_{c'} \sim_\IQ \zeta_1 + \zeta_2$ such that $(\sX_{c'}, \zeta_i)$ for both $i = 1, 2$ are liftable. 
    \end{enumerate}
\end{theorem}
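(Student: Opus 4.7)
The plan is to apply \ref{thm: h = 2 case} directly to the moduli family $\sX \to \sS$ of admissible height~$3$ elliptic surfaces described just above. The main substance of the proof lies in verifying the three hypotheses of that theorem; the rest is essentially bookkeeping. Since the height is $3$ and the irregularity vanishes, one has $p_g(\sX_s) = h - 1 = 2$ and hence $h^{0,2}(\sX_s) = 2$ for every $s \in \sS(\IC)$, matching the first hypothesis. The large monodromy hypothesis is furnished by the standard monodromy calculation \cite[Thm~4.10]{JF} already invoked for \ref{thm: elliptic in intro}. Finally, \ref{lem: Torelli for elliptic} yields that the Kodaira-Spencer map at every $\IC$-point has rank $10h - 2 = 28$, which is in particular $\geq h^{0,2} = 2$, supplying the last hypothesis.

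Having verified the hypotheses, for $p \gg 0$ -- large enough that all of the ``up to increasing $N$'' caveats built into \ref{thm: h = 2 case} and \S\ref{sec: setup 2+} hold at residue characteristic~$p$ -- I would choose any $s_0 \in \sS(k)$ with $\sX_{s_0} \cong X$ and apply \ref{thm: h = 2 case} to the LB-pair $(s_0, \xi)$. This immediately produces a smooth connected $k$-curve $C$, a morphism $f_C : C \to \sS$, a relative line bundle $\bxi \in \shP_{\sX_C/C}(C)$ where $\sX_C := \sX \times_{\sS, f_C} C$, and geometric points $c, c' \in C(k)$ lifting $s_0$ and some $s' \in \sS(k)$ respectively, such that after identifying $\sX_c \cong \sX_{s_0} \cong X$ one has $\bxi_c \sim_\IQ \xi$, while $\bxi_{c'} \sim_\IQ \xi'_1 + \xi'_2$ for line bundles $\xi'_1, \xi'_2 \in \Pic(\sX_{s'})$ each of which is liftable.

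To close, the factorization $\sX_C \to C \times \IP^1 \to C$ claimed in the theorem comes from restricting the universal family $\overline{\sX} \subset \sS \times \IP^1$ along $f_C : C \to \sS$; the elliptic fibration persists automatically under pullback since the $\IP^1$ factor is untouched by the base-change. Setting $\zeta_m := \xi'_m$ (under the identification $\sX_{c'} \cong \sX_{s'}$) delivers properties~(i) and~(ii) of the theorem. The only step requiring a bit of care is the passage to a finite \'etale cover of $\sS$ invoked in the proof of \ref{thm: h = 2 case}: since we only need a statement about geometric $k$-points on fibers of the family, such a cover is harmless, and the elliptic fibration on each fiber is preserved because the cover lives only on the base. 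I expect this logistical point to be the only real obstacle; the substantive content of the theorem is almost entirely packaged inside \ref{thm: h = 2 case} and \ref{lem: Torelli for elliptic}.
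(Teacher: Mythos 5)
Your proposal is correct and follows exactly the paper's route: the paper's entire proof is ``Apply \ref{thm: h = 2 case} to the family $\sX \to \sS$ constructed above,'' and your verification of the hypotheses ($h^{0,2} = p_g = h-1 = 2$, large monodromy via \cite[Thm~4.10]{JF}, and Kodaira--Spencer rank $10h-2$ via \ref{lem: Torelli for elliptic}) is precisely what that one-line proof implicitly relies on. The remaining bookkeeping about the fibration structure and the harmlessness of \'etale covers of the base is sound.
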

\begin{proof}
    Apply \ref{thm: h = 2 case} to the family $\sX \to \sS$ constructed above. 
\end{proof}


\subsection{Complete Intersection Surfaces}

Fix some $r \in \IN_{\ge 1}$ and a tuple $(d_1, \cdots, d_r) \in \IN_{\ge 1}^{\oplus r}$. Consider $\IP^{r + 2}$ over $\IZ$ and let $\varpi : \IP^{r + 2} \to \Spec(\IZ)$ denote the structural morphism. Let 
\[ \overline{\sX} \subseteq \IP^{r + 2} \times_\IZ \prod_{i = 1}^r \IP( \varpi_* \sO_{\IP^{r + 2}}(d_i)), \hspace{2em} \left(\textrm{resp. } \sS \subseteq \prod_{i = 1}^r \IP( \varpi_* \sO_{\IP^{r + 2}}(d_i)) \right) \]
be the universal complete intersection of $r$ hypersurfaces of degree $d_1, \cdots, d_r$ (resp. the open subscheme over which $\overline{\sX}$ is smooth). Denote the restriction of $\overline{\sX}$ to $\sS$ by $\sX$. 

\begin{theorem}
\label{thm: surfaces in P3}
    Assume the following two numerical conditions: 
    \begin{enumerate}[label=\upshape{(\roman*)}]
        \item $\sum d_i > r + 3$
        \item for some (and hence every) $s \in \sS(\IC)$, $\dim \H^1(T_{\sX_s}) \ge h^{2, 0}(\sX_s)$. 
    \end{enumerate}
    Then there exists an open dense $U \subseteq \sS$ such that every LB-pair $(s, \xi)$ with $s$ lying on $U$ is liftable. 
\end{theorem}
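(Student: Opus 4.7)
The plan is to apply \ref{mainthm1} to the family $f : \sX \to \sS$. After choosing $N$ sufficiently large so that $\sS_{\IZ[1/N]}$ is smooth over $\IZ[1/N]$, take $L = \IQ$, and set $U := \sS \setminus \sE$ for the proper closed subscheme $\sE$ produced by \ref{mainthm1}. The family is naturally polarized by the relative hyperplane bundle coming from $\IP^{r+2}$, and $\sS_\IC$ is connected, being the complement of the discriminant in a product of projective spaces. What remains is to verify the two hypotheses of \ref{mainthm1}: large monodromy and sufficiently large period image of $f_\IC$.

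Large monodromy for families of smooth complete intersections is a classical consequence of Picard--Lefschetz theory for Lefschetz pencils: the image of the monodromy representation on primitive second cohomology is Zariski dense in the full orthogonal group. Since $\sum d_i > r + 3$ guarantees $h^{2,0}(\sX_s) > 0$, the Noether--Lefschetz theorem ensures that the Picard rank of a very general fibre equals $1$, so that the transcendental lattice $\mathrm{T}^2(\sX_s, \IQ)$ agrees with the primitive part of $\H^2(\sX_s, \IQ)$. Hence the identity component of the Zariski closure of the monodromy contains $\mathrm{SO}(\mathrm{T}^2(\sX_s, \IQ))$, as required.

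For sufficiently large period image, we need the Kodaira--Spencer map $T_s \sS \to \Hom(\H^1(\Omega^1_{\sX_s}), \H^2(\sO_{\sX_s}))$ to have image of dimension at least $h^{0,2} = h^{2,0}(\sX_s)$. The condition $\sum d_i > r + 3$ ensures $\H^1(T_{\IP^{r+2}}|_{\sX_s}) = 0$ via the Euler sequence and standard vanishing for complete intersections, so the normal bundle sequence shows that $T_s \sS \to \H^1(T_{\sX_s})$ is surjective. Thus it suffices to bound the rank of $a : \H^1(T_{\sX_s}) \to \Hom(\H^1(\Omega^1_{\sX_s}), \H^2(\sO_{\sX_s}))$ from below by $h^{2,0}$. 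By the Serre duality observation in the discussion following (\ref{ksmap}) --- that the two components of the Kodaira--Spencer map in (\ref{ksmap}) are transposes of one another --- the rank of $a$ equals that of its companion $b : \H^1(T_{\sX_s}) \to \Hom(\H^0(\Omega^2_{\sX_s}), \H^1(\Omega^1_{\sX_s}))$. The classical infinitesimal Torelli theorem for smooth complete intersections of general type, which holds in the range $\sum d_i > r + 3$ via the Jacobian ring description of cup product, implies that $b$ is injective, whence $\mathrm{rank\,} b = \dim \H^1(T_{\sX_s}) \geq h^{2,0}$ by hypothesis (ii), giving the required estimate.

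The main technical obstacle will be to pin down the precise scope of the infinitesimal Torelli input: for complete intersections, infinitesimal Torelli is known to fail for a handful of low-degree exceptional cases, which must either be excluded by hypothesis (i) or handled directly using the Macaulay-type description of cup product in the Jacobian ring to obtain only the weaker rank bound actually required. Granted this, \ref{mainthm1} applies and $U = \sS \setminus \sE$ provides the desired conclusion.
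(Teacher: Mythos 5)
Your proposal is correct and takes essentially the same route as the paper: verify large monodromy via the classical result for complete intersections (the paper cites Beauville), and verify the period-image condition by combining surjectivity of $T_s \sS_\IC \to \H^1(T_{\sX_s})$ with infinitesimal Torelli (injectivity of the cup-product map, which the paper takes from Peters, valid here since (i) makes $K_{\sX_s}$ ample) and hypothesis (ii). Your detour through the dual map $b$ is equivalent to the paper's direct use of injectivity of $a$, given the Serre-duality remark following (\ref{ksmap}).
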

\begin{proof}
    For every $s \in \sS$, the canonical divisor $K_{\sX_s}$ of $\sX_s$ is the restriction of the $(- r - 3 + \sum d_i)$-multiple of the hyperplane section on $\IP^{r + 2}_{k(s)}$, so condition $(i)$ ensures that $K_{\sX_s}$ is ample. By \cite[Cor.~1.9, Thm~3.5]{Peters} and their proofs, one infers that the natural map $T_s \sS_\IC \to \H^1(T_{\sX_s})$ is surjective for every $s \in \sS(\IC)$. Then Thm~5.4 \textit{ibid} further tells us that the Kodaira-Spencer map $\H^1(T_{\sX_s}) \to \Hom(\H^1(\Omega_{\sX_s}), \H^2(\sO_{\sX_s}))$ is injective. Finally, in order to apply \ref{mainthm1} to $\sS$, it suffices to check the large monodromy condition on $\sS_\IC$. This follows from \cite[Thm~5]{Beauville}. 
\end{proof}

\begin{remark}
    The formulas for the numbers $h^{p, q}(\sX_s)$ and $\dim \H^1(T_{\sX_s})$ for some (and hence every) $s \in \sS(\IC)$, in terms of $d_1, \cdots, d_r$ are completely known \cite[théorème 2.3]{zbMATH03417601}, so (ii) is indeed a purely numerical condition. In particular, when $r = 1$, any $d_1 \ge 5$ will satisfy the hypothesis of \ref{thm: surfaces in P3}. This gives \ref{thm: surfaces in P3 intro}. 
\end{remark}

\subsection{Non-examples}
\label{sec: non-examples}

We remark on the assumptions (a) $f_\IC$ has large monodromy and (b) $f_\IC$ generically has sufficiently big period image in \ref{mainthm1} and \ref{mainthm2}. The upshot is that (a) should be relaxable whereas (b) is more rigid. 

To begin, there is a quick counterexample to the conclusion when neither (a) nor (b) holds: Suppose that $\sS = \Spec(\sO_L[1/N])$ and $\sX$ is a K3 surface over $\sS$. Then for infinitely many primes $\fp$, the reduction $\sX_{k(\fp)}$ has higher geometric Picard number than $\sX_L$ (\cite[Thm~1.1]{SSTT}, \cite[Thm~1.1]{Tayou}). At such $\fp$, we can find an irreducible component of $\shPic_{\sX/\sS}$ which admits a $\overline{k(\fp)}$-point but not an $\overline{L}$-point, and hence is not flat over $\sS$. Therefore, both \ref{mainthm1} and \ref{mainthm2} are false in this case. On the other hand, for families with $h^{0, 2} = 1$, we can relax (a) to ``$\mathrm{Mon}(R^2f_{\IC} \IQ, s)$ is positive dimensional for some (and hence every) $s \in \sS(\IC)$'', which is in fact equivalent to (b). This is a key observation we used (somewhat implicitly) in \cite{HYZ}.

The example below illustrates that (conjecturally) even if (a) is satisfied, (b) cannot be dropped: 

\begin{proposition}
Let $\sS$ be as in \ref{thm: surfaces in P3} with $r = 1$ and $d_1 = d$ being an even number $\ge 6$. Let $\sC \subseteq \sS$ be a closed subscheme which is smooth and of relative dimension $1$ over an open part of $\IZ$, and such that $\sC_{\IC}$ contains a Hodge-generic point for the VHS given by $\H^2$ of $\sX_\IC$ over $\sS_\IC$. 

Let $\sC_{\mathrm{bad}}$ be the set of closed points $s \in \sC$ such that there exist some alg-geometric point $\bar{s}$ over $s$ and $\xi \in \Pic(\sX_{\bar{s}})$ for which the pair $(\bar{s}, \xi)$ is non-liftable on $\sC$. If we assume the (divisorial) Tate conjecture over finite fields and the Zilber-Pink conjecture, then $\sC_{\mathrm{bad}}$ is Zariski dense on $\sC$. 
\end{proposition}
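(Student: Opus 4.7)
The strategy is to combine the Zilber-Pink conjecture in characteristic zero (which forces the ``excess Hodge'' locus on $\sC_\IC$ to be finite) with the Tate conjecture in positive characteristic (which forces Picard rank jumps at Zariski-dense many closed points of $\sC$). The pairs $(\bar s, \xi)$ produced at these Picard-jumping closed points cannot lift because every char.\ $0$ lift would land in the finite exceptional set.

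\smallskip

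By Noether--Lefschetz, the generic Picard rank of a smooth degree-$d$ hypersurface in $\IP^3_{\IC}$ is $1$, so $\NS(\sX_{\eta_{\sC,\IC}})\otimes \IQ$ is generated by the hyperplane class. Any $t\in \sC(\IC)$ at which $\sX_t$ acquires a Hodge class not proportional to the polarization corresponds to an intersection of $\sC_\IC \subset \sS_\IC$ with a component of the Noether-Lefschetz locus of $\sS_\IC$. Every such component has codimension $h^{0,2} = \binom{d-1}{3}\ge 10$, while $\dim \sC_\IC = 1$, so the intersection is atypical in the sense of \cite{BKU}. Since $\sC_\IC$ contains a Hodge-generic point of $\sS_\IC$, it is itself not contained in any proper weakly special subvariety, so the assumed Zilber-Pink conjecture implies that the collection of atypical Hodge points of $\sC_\IC$ is finite. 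Let $\sC_{\mathrm{exc}}\subsetneq \sC$ be its Zariski closure in $\sC$; it is a proper closed subscheme with finite generic fiber.

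\smallskip

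Next I produce a Zariski-dense subset $\Sigma\subset\sC$ of closed points with $\rho(\sX_{\bar s})\ge 2$. The family $\sX|_\sC$ is non-isotrivial over each reduction $\sC_{k(\fp)}$, because otherwise the period map would be constant on $\sC_\IC$ and $\sC_\IC$ could not contain a Hodge-generic point. For each closed point $s$ of $\sC$, the residue field $k(s)$ is finite, and the assumed Tate conjecture identifies $\rho(\sX_{\bar s})$ with the number of Frobenius eigenvalues on $\H^2_{\et}(\sX_{\bar s},\IQ_\ell(1))$ that are roots of unity. A Frobenius-equidistribution argument for characteristic polynomials in a non-isotrivial $1$-parameter family --- of the sort used in the density results of Charles, Shankar-Shankar-Tang-Tayou, and Maulik-Shankar-Tang-Tayou for K3 surfaces --- then gives, under Tate, that the set $\Sigma_\fp\subset \sC_{k(\fp)}$ of closed points with $\rho(\sX_{\bar s})\ge 2$ is Zariski dense in $\sC_{k(\fp)}$ for every finite prime $\fp$ of $\sO_L[1/N]$. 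Setting $\Sigma=\bigcup_\fp \Sigma_\fp$ produces the desired Zariski-dense set of Picard-jumping closed points of $\sC$.

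\smallskip

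To conclude, let $s\in \Sigma\setminus \sC_{\mathrm{exc}}$, which is still Zariski dense in $\sC$, and choose $\xi\in\Pic(\sX_{\bar s})$ whose class in $\NS(\sX_{\bar s})_\IQ$ is not a multiple of the polarization. If $(\bar s,\xi)$ were liftable on $\sC$, there would exist a finite extension $K$ of $W(k(s))[1/p]$, an $\sO_K$-point $\tilde s$ of $\sC$ lifting $s$, and $\tilde\xi\in \Pic(\sX_{\tilde s})$ specializing to $\xi$. Choosing an embedding $K\hookrightarrow \IC$ one obtains a $\IC$-point $\tilde s_\IC$ of $\sC$ whose fiber $\sX_{\tilde s_\IC}$ carries a Hodge class (the de Rham realization of $\tilde\xi$) not proportional to the polarization; thus $\tilde s_\IC \in \sC_{\mathrm{exc},\IC}$. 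But $s$ is a specialization of $\tilde s_\IC^\flat$ in $\sC$, so $s$ lies in the Zariski closure of $\sC_{\mathrm{exc},\IC}$, namely $\sC_{\mathrm{exc}}$, contradicting the choice of $s$. Hence $s\in \sC_{\mathrm{bad}}$, and so $\sC_{\mathrm{bad}}$ is Zariski dense in $\sC$. The principal obstacle is the density claim for Picard jumps in the third paragraph: for $h^{2,0}=1$ it is known unconditionally via Arakelov/Borcherds methods, but for the larger Hodge numbers occurring here it is the step where the Tate hypothesis is used in an essential way, through an analysis of how often specializations of the family's Frobenius characteristic polynomial acquire a repeated root at $q_s$.
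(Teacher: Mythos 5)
Your overall skeleton is the right one, and your second half (Zilber--Pink forces the Noether--Lefschetz locus of the Hodge-generic curve $\sC_{\IC}$ to be a finite set of points, so any characteristic-zero lift of a non-hyperplane class would force $s$ into the Zariski closure of that finite set) is essentially the paper's argument. The gap is in your characteristic-$p$ input. You propose to produce a Zariski-dense set of Picard-jumping closed points via a ``Frobenius-equidistribution argument of the sort used in the density results for K3 surfaces.'' Those results (Charles, Shankar--Shankar--Tang--Tayou, Maulik--Shankar--Tang--Tayou) depend essentially on $h^{2,0}=1$: they go through GSpin Shimura varieties, special divisors, and arithmetic intersection theory, none of which is available for $h^{2,0}=\binom{d-1}{3}\ge 10$. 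The Tate conjecture by itself does not convert ``non-isotrivial family over a curve over $\IF_q$'' into ``dense set of fibres with extra unit-root Frobenius eigenvalues''; that density statement is open in this generality, and you supply no argument for it --- indeed you flag it yourself as the ``principal obstacle.'' As written, the proof therefore rests on an unproved (and not merely conjectural-as-hypothesized) intermediate claim.

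The tell is that you never use the hypothesis that $d$ is even. The paper's route is much more elementary and makes the jumping happen at \emph{every} closed point: by the Weil conjectures the Frobenius eigenvalues on $\H^2_{\et}(\sX_{\bar s},\IQ_\ell(1))$ that are not roots of unity pair up ($\beta \leftrightarrow 1/\beta$), so the number of root-of-unity eigenvalues has the same parity as $b_2$; for $d$ even, $b_2=d^3-4d^2+6d-2$ is even; hence, granting Tate over finite fields, $\operatorname{rank}\NS(\sX_{\bar s})$ is even, and being at least $1$ it is at least $2$. This gives a class $\xi$ not proportional to $\sO(1)$ at every closed point of $\sC$, after which your own concluding paragraph applies verbatim (the closed points of $\sC$ are Zariski dense and only those in the closure of the finite NL set can carry liftable such $\xi$). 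Replacing your third paragraph by this parity argument repairs the proof and explains the role of the evenness hypothesis.
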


\begin{proof}
    Let $s \in \sS$ be any closed point and $\ell$ be a prime $\neq \mathrm{char\,} k(s)$. The Weil conjectures imply that the Frobenius eigenvalues on $\H^2_\et(\sX_{\bar{s}}, \IQ_\ell(1))$ which are not roots of unity come in pairs. As $\H^2_\et(\sX_{\bar{s}}, \IQ_\ell(1))$ is even dimensional by the assumption that $d$ is even, the Tate conjecture implies that $\mathrm{rank\,} \NS(\sX_{\bar{s}})$ is even. Therefore, there exists a line bundle $\xi \in \Pic(\sX_{\bar{s}})$ which does not have the same class as $\sO_{\sX_{\bar{s}}}(1)$. 

    Suppose now that the LB-pair $(\bar{s}, \xi)$ lifts to characteristic $0$. Then by choosing an isomorphism between $\IC$ and the algebraic closure of $W(k(\bar{s}))[1/p]$, we obtain a $\IC$-point on the NL locus. Because $\sC$ is Hodge-generic, the expected codimension of a NL locus in $\sC_{\IC}$ is $h^{0,2} > 1$. By the Zilber-Pink conjecture formulated in \cite[Conj. 2.5]{BKU} this implies the NL locus of $\sC$ is a finite set of points, which implies that $s$ lies in the Zariski closure of finitely many $\IC$-points on $\sC_\IC$. 
\end{proof}

\appendix

\section{A Lemma on Flat Frames}
\begin{lemma}
    Let $k$ be a field and fix some $r \in \IN$. Let $A_\infty = k[\![x_1, \cdots, x_n]\!]$ for some $n$. Set $\ID := \Spf(A_\infty)$, $\fm := (x_1, \cdots, x_n)$, $I := (x_1^{r + 1}, \cdots, x_i^{r + 1})$ and $A := A_\infty/I$. Let $(\hat{M}, \hat{\nabla})$ be any flat vector bundle over $\ID$ and $(M, \nabla)$ be its restriction to $\Spec(A)$. Suppose that $r!$ is invertible in $k$. 
    
    Then for every $m_0 \in M / \fm M$, there exists a unique $m \in M^{\nabla = 0}$ such that $m_0 = m \mod \fm M$. In particular, $(M, \nabla)$ is trivial as a flat vector bundle. 
\end{lemma}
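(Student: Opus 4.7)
The plan is to produce $m$ as the reduction modulo $I$ of a formal power-series solution. Since $A_\infty$ is a complete local ring, I first fix a trivialization $\hat M \cong A_\infty^{\oplus N}$, under which the connection takes the form $\hat\nabla = d - \Gamma$ with $\Gamma = \sum_{i=1}^{n} \Gamma_i\,dx_i$ a matrix of one-forms over $A_\infty$; flatness of $\hat\nabla$ is then the integrability condition $\partial_i \Gamma_j - \partial_j \Gamma_i = [\Gamma_j, \Gamma_i]$ for all $i, j$. Fixing a representative in $k^{\oplus N}$ of the prescribed initial value $m_0 \in M/\mathfrak{m}M$ and using the Ansatz $\tilde m = \sum_\alpha \tilde m_\alpha\, x^\alpha$ with $\tilde m_0 = m_0$, the equation $\hat\nabla \tilde m = 0$ translates into the family of relations $\partial_i \tilde m = \Gamma_i \tilde m$ for $i = 1, \dots, n$, i.e., the coefficient recursion
\[ (\alpha_i + 1)\, \tilde m_{\alpha + e_i} \;=\; \sum_{\beta + \gamma = \alpha} \Gamma_{i,\beta}\, \tilde m_\gamma, \qquad \Gamma_i \;=\; \sum_\beta \Gamma_{i,\beta}\, x^\beta. \]

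Only the coefficients $\tilde m_\delta$ with $\delta_j \leq r$ for every $j$ contribute to $m = \tilde m \bmod I$, and the integers $\alpha_i + 1$ appearing when computing such coefficients all lie in $\{1, 2, \dots, r\}$; each is a unit under the hypothesis that $r!$ is invertible in $k$, so the recursion can be executed to the required depth. The main obstacle is consistency: there are typically several indices $i$ with $\delta_i \geq 1$ along which one can advance from level $|\alpha|$ to level $|\alpha|+1$, and all of them must yield the same $\tilde m_\delta$. For two such indices $i, j$, the required equality
\[ \delta_j \cdot \sum_{\beta + \gamma = \delta - e_i} \Gamma_{i,\beta}\, \tilde m_\gamma \;=\; \delta_i \cdot \sum_{\beta + \gamma = \delta - e_j} \Gamma_{j,\beta}\, \tilde m_\gamma \]
follows from expanding $[\partial_i, \partial_j] \tilde m$, applying the inductive hypothesis $\partial_l \tilde m = \Gamma_l \tilde m$ at lower orders for $l \in \{i, j\}$, and plugging in the integrability identity. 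This is the standard formal Frobenius argument, adapted to the truncated setting of $A$.

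Uniqueness is then immediate from the same recursion: if two flat sections $m_1, m_2$ over $A$ share the prescribed constant term, then lifting $m_1 - m_2$ and running the identical relations forces every coefficient to vanish by induction on $|\alpha|$, once again using that $r!$ is a unit. Applying the resulting existence-and-uniqueness statement to any basis of $\hat M/\mathfrak{m}\hat M$ produces a flat frame of $M$, and hence the claimed trivialization $(M, \nabla) \cong (A, d)^{\oplus N}$.
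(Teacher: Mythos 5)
Your proposal is correct, but it proves the lemma by a different route than the one taken in the appendix. You trivialize $\hat{M}$, write $\hat{\nabla} = d - \Gamma$, and construct the flat section order by order via the coefficient recursion $(\alpha_i+1)\tilde{m}_{\alpha+e_i} = (\Gamma_i \tilde{m})_{\alpha}$, using the integrability identity to check that the different ways of reaching a multi-index $\delta$ agree; this is the formal Frobenius / Picard iteration argument truncated to multi-indices with all entries $\le r$, which is exactly the range in which the denominators $\alpha_i+1 \in \{1,\dots,r\}$ are units. The paper instead follows Katz and writes down the closed-form projector $\hat{P}(\hat{m}) = \sum_{\alpha} \frac{(-1)^{|\alpha|}}{\alpha!} x^{\alpha}\hat{\nabla}^{\alpha}(\hat{m})$ (the multi-indices again ranging over entries $\le r$), checks that it descends modulo $I$ and is a projection onto $M^{\nabla=0}$, and deduces uniqueness from the observation that a function $f \in A$ with $df = 0$ and $f(0)=0$ vanishes. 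The two arguments use the same inputs ($r!$ invertible, integrability) in essentially dual ways: your recursion solves the system $\partial_i \tilde{m} = \Gamma_i \tilde{m}$ forwards and needs integrability for consistency of the solution, while the Katz formula is the closed-form inverse of that recursion and needs integrability to make the $\hat{\nabla}_i$ commute inside $\hat{\nabla}^{\alpha}$. Your version is more elementary and makes the role of each hypothesis very explicit; the paper's version avoids choosing a trivialization and packages the whole construction into a single operator identity. The one step you compress --- the consistency check at a multi-index $\delta$ with two nonzero entries --- does go through in the truncated setting, because every coefficient relation it invokes lives at multi-indices componentwise $\le \delta$, hence within the range where the recursion is defined and the denominators are units; it would be worth writing out that verification if this were the submitted proof.
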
 

\begin{proof}
First, we note that $\hat{\nabla}_{\p_i} : \hat{M} \to \hat{M}$ has a standard meaning, where $\{\p_1, \cdots, \p_n\}$ are dual to the generators $dx_1, \cdots, dx_n$ for $\Omega_{\ID / k}$. Let us simply write $\hat{\nabla}_i$ for $\hat{\nabla}_{\p_i}$. The integrability of $\hat{\nabla}$ implies that the operators $\hat{\nabla}_i$'s commute. Now we make sense of $\nabla_i$. For each $i$, set $I^i := (x_1^{r + 1}, \cdots, x_i^{r}, \cdots, x_n^{r + 1})$ and $A^i := A / (x_i^{r}) = A_\infty / I^i$. By applying the conormal sequence to the surjection $k[x_1, \cdots, x_n] \to A$, one checks that 
$$\Omega_{A/ k} = \bigoplus_{i = 1}^n  A^{i} dx_i \simeq \bigoplus_{i = 1}^n  A^{i} $$
Then we define $\nabla_i$ to be the composition $M \stackrel{\nabla}{\to} M \tensor_A \Omega_{A/k} \to M \tensor_A A^{i}$, where the second map is the projection to the $i$th factor. As the restriction of $\hat{\nabla}$, $\nabla$ is characterized by the property that for any $\hat{m} \in \hat{M}$ with image $m \in M$, $\nabla(m)$ is the image of $\hat{\nabla}(\hat{m})$ under the composition $\hat{M} \tensor_{A_\infty} \Omega_{\ID/k} \to M \tensor_A (\Omega_{\ID/k} \tensor_{A_\infty} A) \to M \tensor_A \Omega_{A/k}$, so that the following diagram commutes: 
\begin{equation}
\label{eqn: nabla_i}
\begin{tikzcd}
	{\hat{M}} & {\hat{M}} \\
	M & M \tensor_A A^i
	\arrow["{\hat{\nabla}_{i}}", from=1-1, to=1-2]
	\arrow[from=1-1, to=2-1]
	\arrow["{\nabla_{i}}", from=2-1, to=2-2]
	\arrow[from=1-2, to=2-2]
\end{tikzcd}    
\end{equation}

Define an additive map $\hat{P} : \hat{M} \to \hat{M}$ by the formula (cf. \cite[(5.1.2)]{Katz}): 
\begin{equation}
    \label{eqn: Katz}
    \hat{P}(\hat{m}) = \sum_{\alpha} \frac{(-1)^{|\alpha|}}{\alpha!} x^\alpha \hat{\nabla}^\alpha \text{ for all } \hat{m} \in \hat{M}
\end{equation}
where $\alpha$ runs through tuples $(\alpha_1, \cdots, \alpha_n) \in (\IN_{\le r})^{n}$, $|\alpha| = \sum_i \alpha_i$, $\alpha! = \alpha_1! \cdots \alpha_n!$, $x^\alpha = x_1^{\alpha_1} \cdots x_n^{\alpha_n}$ and $\hat{\nabla}^\alpha = \hat{\nabla}^{\alpha_1}_1 \cdots \hat{\nabla}^{\alpha_n}_n$. We define an additive map $P : M \to M$ by the setting for each $m \in M$ and $\hat{m} \in \hat{M}$ lifting $m$, $P(m) = \hat{P}(\hat{m}) \mod IM$. This is well defined because for each $i$, $\hat{P}(x_i^{r + 1} \hat{m})$ is divisible by $x_i^{r + 1} \in I$. We claim that $P$ is a projection onto $M^{\nabla = 0}$. By the diagram (\ref{eqn: nabla_i}), we reduce to showing the following two statements for $\hat{m} \in \hat{M}$: (a) For every $i$, $\hat{\nabla}_i P(\hat{m}) \in I^i \hat{M}$. (b) If for every $i$, $\hat{\nabla}_i (\hat{m}) \in I^i \hat{M}$, then $\hat{P}(\hat{m}) = \hat{m} \mod IM$. Both are readily checked by a direct computation. 

Finally, for any $m_0 \in M/ \fm M$, to construct $m$ as desired we may start with any $m \in M$ with mod $\fm M$ class $m_0$ and replace $m$ by $P(m)$---it is clear by (\ref{eqn: Katz}) this does not change mod $\fm$ class. Now we already know that $(M, \nabla)$ is isomorphic to $\mathrm{rank\,} M$ copies of $(A, d)$. To verify the uniqueness of $m$ if suffices to note that if $d f = 0$ and $f(0) = 0$ for $f \in A$, then $f = 0$. This is clear when $\mathrm{char\,} k = 0$, and one uses the assumption $r < p$ when $\mathrm{char\,} k =p > 0$. 
\end{proof}

\begin{remark}
    When one relates the above to \ref{rlimpdef}, note that $(x_1^{r + 1}, \cdots, x_i^{r + 1}) \subseteq (x_1, \cdots, x_i)^{r + 1}$. Also, recall that the Cohen structure theorem \cite[0C0S]{stacks-project} tells us that a regular Noetherian complete local $k$-algebra with residue field $k$ of dimension $n$ is isomorphic to $k[\![x_1, \cdots, x_n]\!]$.

    When $\mathrm{char\,}{k} = p > 0$, the $k$-algebra $A$ as above is a PD-thickening of $\Spec(k)$, so it is also possible to prove the lemma directly by crystalline formalism (e.g., some appropriate variant of \cite[07JH]{stacks-project}). 
\end{remark}


\paragraph{Acknowledgments} This collaboration started at a conference at the Simons-Laufer Mathematical Sciences
Institute (SLMath, previously known as MSRI), and the authors would like to thank SLMath for organizing
the event and their hospitality. The first author additionally thanks SLMath for hosting him during the spring
of 2023, and the Institut des Hautes Études Scientifiques (IHES) for hosting him and supporting his work
during the remainder of the year. The second author would like to thank Ananth Shankar and Mark Kisin
for their interest in the work. During the revision process of the work, the second author was supported
by the CUHK start up grant (Project No. 4937267), direct grant (Project No. 4053718) and an ECS grant
from the Hong Kong Research Grants Council (Project No. 24308225). We thank the anonymous referee
for carefully reviewing the paper and for many comments that improved its exposition.


\paragraph{Data Availability} Data sharing is not applicable to this article as no datasets were generated or analysed during the current study.

\paragraph{Conflict of Interest} On behalf of all authors, the corresponding author states that there is no conflict of interest. 

\printbibliography

\end{document}